\title[Sampling inverse subordinators and subdiffusions]{Sampling inverse subordinators and subdiffusions}
\author{Ivan Bio\v{c}i\'{c}}
\author{Daniel E. Cedeño-Girón}
\author{Bruno Toaldo}
\keywords{Inverse subordinators, anomalous diffusion, time-changed processes, semi-Markov processes, Monte Carlo method, weak ergodicity breaking}
	\date{\today}
	\subjclass[2020]{60K50, 65C05}
\thanks{The authors acknowledge financial support under the National Recovery and Resilience Plan (NRRP), Mission 4, Component 2, Investment 1.1, Call for tender No. 104 published on 2.2.2022 by the Italian Ministry of University and Research (MUR), funded by the European Union – NextGenerationEU– Project Title “Non–Markovian Dynamics and Non-local Equations” – 202277N5H9 - CUP: D53D23005670006 - Grant Assignment Decree No. 973 adopted on June 30, 2023, by the Italian Ministry of University and Research (MUR)}
\thanks{The author Bruno Toaldo are partially supported by Gruppo Nazionale per l’Analisi Matematica, la Probabilità e le loro Applicazioni (GNAMPA-INdAM)}
\thanks{The authors would like to thank the Isaac Newton Institute for Mathematical Sciences, Cambridge, for support and hospitality during the programme Stochastic systems for anomalous diffusion, where work on this paper was undertaken. This work was supported by EPSRC grant EP/Z000580/1}
\thanks{The authors would like to thank Prof. Aleksandar Mijatović for fruitful discussions on the
topic that improved a previous version of this manuscript.}
\thanks{The authors would like to thank Prof. Eli Barkai for very useful remarks on weak ergodicity breaking}
\thanks{The authors would like to thank two anonymous referees whose remarks and suggestions considerably improved a previous version of this manuscript.}
\DeclareMathAlphabet{\mathpzc}{OT1}{pzc}{m}{it}
\newtheorem{theorem}{Theorem}[section] 
\newtheorem{lemma}[theorem]{Lemma}
\newtheorem{corollary}[theorem]{Corollary}
\newtheorem{proposition}[theorem]{Proposition}
\theoremstyle{remark}
\newtheorem{remark}[theorem]{Remark}
\newtheorem{example}[theorem]{Example}
\numberwithin{equation}{section} 
\newcommand{\R}{\mathbb{R}}
\newcommand{\E}{\mathds{E}}
\newcommand{\var}{\mathds{V}\text{ar}}
\newcommand{\pr}{\mathds{P}}
\newcommand{\1}{\mathds{1}}
\newcommand{\N}{\mathbb{N}}
\def \ls { \left[ }
\def \rs {\right] }
\DeclareMathAlphabet{\mathpzc}{OT1}{pzc}{m}{it}
\def \l { \left( }
\def \r {\right) }
\newtheorem*{assumption*}{\assumptionnumber}
\providecommand{\assumptionnumber}{}
\newenvironment{assumption}[2]
{%
	\renewcommand{\assumptionnumber}{(\textbf{#1#2})}%
	\begin{assumption*}%
		\protected@edef\@currentlabel{(\textbf{#1#2})}%
	}
	{%
	\end{assumption*}
}
\begin{document}
\maketitle

\begin{abstract}
	In this paper, a method to exactly sample the trajectories of inverse subordinators (in the sense of the finite-dimensional distributions), jointly with the undershooting or overshooting process, is provided. The method applies to general {strictly increasing} subordinators. The (random) running times of these algorithms have finite moments and explicit bounds for the expectations are provided. Additionally, the Monte Carlo approximation of functionals of subdiffusive processes (in the form of time-changed Feller processes) is considered where a central limit theorem and the Berry-Esseen bounds are proved. The approximation of time-changed It\^o diffusions is also studied. The strong error, as a function of the time step, is explicitly evaluated demonstrating the strong convergence, and the algorithm's complexity is provided. The Monte Carlo approximation of functionals and its properties for the approximate method is studied as well. An application of our algorithms in the context of weak ergodicity breaking of subdiffusion is also discussed.
\end{abstract}

\tableofcontents

\section{Introduction}
{The main results of this paper include exact simulation algorithms for the finite-dimensional distributions of inverse subordinators, jointly with their undershooting and overshooting processes, as well as applications of these algorithms to time-changed Feller processes. The algorithms have running times with finite moments, and explicit bounds for the expectations are derived. Then, we also establish convergence results for Monte Carlo approximations of functionals of these time-changed processes together with central limit theorems, Berry–Esseen estimates, and strong error analysis. When It\^o diffusions cannot be sampled exactly we also provide a Euler-Maruyama scheme for approximation of the time-changed process, together with the Monte Carlo analysis of functionals. Algorithmic complexity is always quantified. All of this is motivated by the interplay between continuous time random walks (CTRWs), time-changed Markov processes and fractional-type non-local equations, that gained considerable popularity in the last decades.} This interplay has the central role in modeling anomalous diffusions, especially in the context of Hamiltonian chaos \cite{zaslavsky3, zaslavsky4, zaslavsky5}, Hamiltonian systems describing cellular flows \cite{hairer2, hairer1}, trapping models \cite{benarous, savtoa}, anomalous heat conduction \cite{FALCINI2019584, povstenko2015fractional}, option pricing \cite{Torricelli2020}, neuronal modeling \cite{APT20}, and many others, see e.g. \cite{barkai0, barkai1, fedotov1, fedotov3, fedotov2, gianni,  barkai2, METZLER20001, giannisim}, and references therein for several possible applications and modeling aspects of the theory.

One of the milestones in this context is the seminal paper \citep{Montroll1965} from which the CTRWs were rapidly adapted and tested for their suitability to describe anomalous diffusions in carrier transport in amorphous solids \citep{Scher1973a, Scher1973b, Scher1975}.
A CTRW in $\R^d$ describes the motion of a particle performing independent random jumps paced by independent residence times. In formulae: If $\l J_i, i \in\N \r$ is a sequence of non-negative absolutely continuous independent identically distributed random variables (residence times), then a CTRW is defined as the process $X=(X_t,\, t\geq0)$ given by
\begin{align}
    X_t = X_0 + \sum_{i=1}^{N(t)} Y_i\,. 
\end{align}
Here $(Y_i,\, i\in\N)$ is a sequence of independent identically distributed random variables in $\R^d$ and $N(t)\coloneqq \inf\{ n\in\N:\, \sum_{i=1}^nJ_i \leq t \}$.
CTRWs lead, after a scaling limit, to fractional partial differential equations (PDEs) under appropriate assumptions on the non-exponential residence times and the jumps. In practice, one can introduce a scale parameter to get a scaled process $X^c_t$ that converges as $c \to \infty$ (in a suitable sense) to another process $(X^\infty_t,\, t\geq0)$ such that $\E^x[u(X^\infty_t)]$ solves the fractional equation
\begin{align}
    \partial_t^\alpha q(x,t) = G q(x,t), \qquad x \in \R^d,\, t>0,
    \label{frac}
\end{align}
with $q(x,0)=u(x)$. Here $\partial_t^\alpha$ is the fractional derivative in the Caputo sense, i.e.
\begin{align}
    \partial_t^\alpha q(x,t) \, = \, \frac{1}{\Gamma(1-\alpha)} \partial_t \int_0^t (q(x,s)-q(x,0)) (t-s)^{-\alpha} ds,
    \label{airport}
\end{align}
and $G$ is a linear operator connected to $X^\infty$, {see e.g. \cite[Chapter 4]{meerschaert2019stochastic} for the foundation of this theory, and a more detailed explanation in Subsection \ref{sec:MCA}}.
Hence, since very often CTRWs can be simulated exactly, a lot of efforts have been dedicated to approximations of solutions to fractional PDEs using Monte Carlo methods based on CTRWs, see e.g. \citep{Fulger2008, Gorenflo2007, Uchaikin2003} and the references therein. The recent work \citep{Kolokoltsov2023} also provides convergence rates for functional limit theorems for CTRW approximations to the fractional evolution. We also note that CTRW approximations have an effective alternative in numerical methods, see \cite{Leonenko2022a}.

It turns out that the CTRWs limit processes $X^\infty$ are time-changed Markov processes, {even in a very general setting. This has been made finally clear in \cite{straka2011lagging} and the theory is as follows. Define} $(S_n^c, T_n^c)\coloneqq \sum_{i=1}^n (Y_i^c, J_i^c)$ and {assume that the sequence $(S_n^c, T_n^c), n \in\N$, is a discrete time Markov chain, i.e., the jumps and the waiting times are not necessarily independent.} Suppose that $( S_{[cu]}^c, T_{[cu]}^c ) \to (M_u, \sigma_u)$, as $c \to +\infty$, weakly in the space of c\`adl\`ag functions endowed with the Skorohod $J_1$ topology, where $( (M_u, \sigma_u),\, u\geq0 )$ is a Feller process. Then $X^\infty_t = (M_{L_t-})^+ \coloneqq \lim_{\delta_1 \searrow 0}\lim_{\delta_2 \nearrow 0}M_{L_{t+\delta_1}+\delta_2}$ where $L_t \coloneqq \inf\{s\geq0:\, \sigma_s >t \}$, see \cite[Section 2]{Meerschaert2014}. Note that when $M$ and $\sigma$ are independent, one has that $M_{L_t-} = M_{L_t}$ a.s. {(by \cite[Lemma 3.9]{straka2011lagging})} and the processes $X^\infty$ are {often called subdiffusions} in the sense that the process $L = (L_t,\, t\geq0)$, used as a time-change, induces intervals of constancy whose duration is arbitrarily distributed (depending on the jumps of $\sigma$), and as such are suitable for modeling sticking or trapping effects in an evolution. The mean square displacement for these processes has been studied in several cases (see, e.g., \cite{KOCHUBEI2008252, toaldo2015}) and shown to be sublinear.

The stochastic representation of the solution to \eqref{frac} is obtained by assuming that $M$ is a Markov process with the infinitesimal generator $G$, and by assuming that $\sigma$ is a positively skewed stable process (i.e. a stable subordinator) independent of $M$, see \cite{Baeumer2001}. If $\sigma$ is a general subordinator independent from $M$, then the governing equation is as in \eqref{frac} {where the non-local time operator has the same form of \eqref{airport} but with a different convolution kernel} (see e.g. \cite{chen, hernandez2017, kochubei, kolokoltsov2015, Meerschaert2004}). The process $L= (L_t,\, t\geq0)$ is then called an inverse subordinator (or local time), see more on these processes in \cite{tlms, bertoin1996}. If the random variables ${J_i^c}$ and ${Y_i^c}$ {(residence times and jumps)} are not independent, then the processes $(M_u,\, u\geq0)$ and $(\sigma_u,\, u\geq0)$ are not independent. In this case, the governing equation is non-local in both variables $t$ and $x$ in a coupled way, i.e. the non-local operator acts jointly on both variables $t$ and $x$, see \cite{ascione2024,Baeumer2005, harmonic}.

In light of this relationship between fractional PDEs and subordinators, simulating and inverting subordinator paths raised a new strategy to solve non-local PDEs numerically, see e.g. \citep{Kobayashi2016, Lv2020, Magdziarz2007}. The papers \cite{Kobayashi2012,  Kobayashi2019, Kobayashi2016, Kobayashi2011} provided a framework for connecting It\^o diffusions time-changed with inverse subordinators and their associated transition densities, where approximations of these processes and the convergence rate were also studied. 
In these papers, however, the paths of inverse subordinators were always simulated approximately, from the trajectories of the corresponding subordinator. 
The exact sampling of the inverse subordinator has been done exactly, only for a fixed single time and in the stable case, in the paper \citep{kolokoltsov2021}, where the Monte Carlo estimator for the stochastic representation of the solution to the fractional Cauchy problem was studied. Also, very recently, in \citep{DiGregorio2024} the authors show a paths's approximation for L\'evy processes time-changed by inverse subordinators using a martingale approach (see Section 7 there).

Clearly, a method to make exact samples of trajectories of an inverse subordinator (i.e. a method for exact sampling from the finite-dimensional distribution) would be crucial in the context of modeling diffusions subject to sticking or trapping, and for approximating the associated non-local kinetic equations. With the so far developed techniques it is impossible to perform this sample exactly, but only using approximations from the paths of the subordinator, and in some specific cases. However, the single time exact sampling of the inverse subordinator is already possible using different techniques. This has been done in \citep{Chi2016, Dassios2020} for stable subordinators and their truncated and tempered versions, and in \citep{Jorge2023b, Jorge2023a} which are the more general and up-to-date contributions. The latter papers, provide an exact and fast algorithm to sample a quite general class of inverse subordinators, at a fixed single time. This will be the starting point for the algorithms developed in this paper.

\subsection{Main results and structure of the paper}
In this work we develop a method to make exact samples from the finite-dimensional distributions of the processes 
\begin{align}
( (L_t, \gamma_t),\, t\geq0 ), \enskip  ( (L_t, \Gamma_t),\, t\geq0 ) \enskip\text{ and }\enskip ((L_t, \gamma_t, \Gamma_t), t \geq 0 ).
\label{intro_vector}
\end{align}
Here $L_t$ is the inverse of the strictly increasing subordinator $\sigma = (\sigma_t,\, t\geq0)$, $\gamma_t = t-\sigma_{L_t-}$ where $\sigma_{L_t-} \coloneqq \lim_{\delta\downarrow 0}\sigma_{L_t-\delta}$ is called the age process, and $\Gamma_t = \sigma_{L_t}-t$ is called the remaining lifetime process. The processes $\sigma_{L_t-}$ and $\sigma_{L_t}$ are called the undershooting and overshooting of the subordinator, respectively. Our method can be applied to the inverse of any subordinator with strictly increasing trajectories. {The algorithms require exact sampling from the single time distribution of the processes in \eqref{intro_vector}.} This is currently possible for a fairly general class of inverse subordinators, using the method described in \cite{Jorge2023b, Jorge2023a}. {This method, described in Section \ref{sec:prelim}, gives the possibility to make exact simulations of the first passage time event of a subordinator through a quite general curve, jointly with its undershooting and overshooting, for a broad class of subordinators. However, since subordinators start at zero, one has that $L_0=H_0=D_0=0$ almost surely, and thus the method above does not allow to sample $(L_t, H_t, D_t)$ conditionally on different values of $H_0$ and/or $D_0$, which is needed in our algorithms. Hence we will simplify the method in \cite{Jorge2023b, Jorge2023a} to a constant boundary while we extend it to include conditioning on different starting points. Furthermore, the algorithm for the finite-dimensional distributions of the second process in \eqref{intro_vector} does not require to know the undershooting: this gives us the possibility to use also methods from \cite{Dassios2020} which are easier but still efficient.} The sampling of trajectories is detailed in Section \ref{secpathinv}, and the procedures are given in  Algorithms \ref{alg:2}, \ref{alg:3} and \ref{alg:triplet}. The computational complexity is {discussed, together with the methods to sample the single time distributions, in Section \ref{sec:compcomp}.}

With this at hand, we then study some properties of a Monte Carlo estimator for the quantity $\E^x [u( M_{L_{t_1}}, \cdots, M_{L_{t_n}} )]$, where $M=(M_t,\, t\geq0)$ is an arbitrary Feller process in $\R^d$, $\E^x$ denotes the expectation when $M_0=x$ a.s., and $u : \R^{d \times n} \mapsto \R$ are suitable functionals, for arbitrary $n\in\N$ and any choice of times $t_1, \cdots, t_n$. For the Monte Carlo estimator we prove, in particular, a Central Limit Theorem (CLT) and the Berry-Esseen bounds, see Theorem \ref{thm1541}. This is detailed in Section \ref{secmonte}.

The previously discussed Monte Carlo method is applicable whenever the Feller process can be sampled exactly. This is of course not always possible. In Section \ref{secapproxdiff}, we consider an Euler-Maruyama scheme for the approximation of the trajectories of time-changed It\^o diffusions. In particular, we provide an estimate for the strong error as a function of the time step of the scheme, and as a consequence the strong convergence order is determined. Our method improves classical methods in the literature, e.g. \cite{Kobayashi2011}, as we avoid the error in the scheme introduced by approximation of the paths of the inverse subordinator. 
With this at hand, we then consider a Monte Carlo estimator for functionals of the approximate process: properties, such as a CLT and computational complexity, are then studied. {Finally, in Section \ref{sec:web}, we discuss a possible use of our methods in the context of weak ergodicity breaking for subdiffusions.}

\section{Exact simulation of paths of the inverse, undershoot, and overshoot of a subordinator}\label{secpathinv}
{The goal of this section is to develop algorithms that exactly sample the trajectory (in the sense of finite-dimensional distributions) of an inverse subordinator jointly with its undershoot or the overshoot process. These are provided as Algorithm \ref{alg:2} and Algorithm \ref{alg:3}, and come as a consequence of Markovianity of such processes, proved here in Theorems \ref{markovtransprob} and  \ref{markovtransprob2}, respectively. But first, we begin by formalizing the framework of this paper.}

Let $\sigma = (\sigma_t,\, t\geq0)$ be a subordinator, i.e. an increasing L\'evy process with $\sigma_0=0$, with the Laplace exponent $[0,+\infty)\ni\lambda\mapsto\phi(\lambda)$ given by
\begin{align}
    \phi(\lambda) &= b\lambda + \int_{(0, +\infty)} (1-e^{-\lambda s}) \nu(ds)
    \label{bernstinsimul}\\
    &{=b\lambda + \lambda \int_{(0,+\infty)}e^{-\lambda s}\overline \nu(s)ds}.\label{bernstein-eq0}
\end{align}
{Here $b\ge0$ is called the drift, and $\nu(ds)$ the L\'evy measure of the subordinator $\sigma$, which satisfies $\int_{(0,+\infty)}(1\wedge s)\nu(ds)<+\infty$, and where $\overline \nu(s)=\nu(s,+\infty)$ is the tail of $\nu$.} There is one-to-one correspondence between functions of type \eqref{bernstinsimul} and subordinators, see \cite{Schilling2012}. {We assume that $b>0$ or $\nu(0,+\infty)=+\infty$, which implies that $\sigma$ is strictly increasing, i.e. it is not a compound Poisson process. Note also that $\nu(0,+\infty)=+\infty$ is usually called infinite activity of subordinator and it means that the subordinator has a countable infinity of jumps in any given finite time interval, while if $\nu(0,+\infty)<+\infty$, it has finite number of jumps in any finite time interval.}

We denote the inverse of $\sigma$ by $L= (L_t,\, t\geq0)$, where $L_t = \inf\{ s>0:\, \sigma_s >t \}$, the undershooting of $\sigma$ by $H= (H_t,\, t\geq0)$ where $H_t= \sigma_{L_t-}\coloneqq \lim_{\delta\downarrow 0}\sigma_{L_t-\delta}$, and the overshooting by $D= (D_t,\, t\geq0)$, where $D_t = \sigma_{L_t}$.
Denote by $\gamma_t= t-H_t$ and $\Gamma_t = D_t -t$. In this section, we provide a method of exact simulation of the trajectories $[0, +\infty) \ni t \mapsto (L_t, \gamma_t)$ and $[0, +\infty) \ni t  \mapsto (L_t, \Gamma_t)$, in a finite (but {arbitrarily} big) number of points, when the process $\sigma$ can start at $v^\prime \leq0$. In other words, we can assume any starting point $(L_0, \gamma_0)= (x,v)$ and $(L_0, \Gamma_0)= (x,r)$, with $x \in\R$, $v\geq0$, $r\geq0$. In practice, we furnish a method to exactly sample the vectors
\begin{align}
    ( (L_{t_1}, \gamma_{t_1}), \cdots, (L_{t_n}, \gamma_{t_n}) ),
    \label{vectors}
\end{align}
and
\begin{align}
     ( (L_{t_1}, \Gamma_{t_1}), \cdots, (L_{t_n}, \Gamma_{t_n}) ),
     \label{vectors2}
\end{align}
for any choice of $0 < t_1 < \cdots < t_n$, $n\in\N$, and without assuming $(L_0=0, \gamma_0 =0)$ or $(L_0=0, \Gamma_0=0)$. Since the starting point of the subordinator is $\sigma_0=0$, a.s., one has that $L_0=0$ and $\gamma_0=0$, hence in order to give a meaning to the event $\gamma_0>0$, we perform the following construction of the mentioned processes.

Take the Feller semigroup $T_t$, $t\geq0$, acting on functions $h \in C_0(\R^2)$, given by
\begin{align}
    T_t u (x,v) \, = \, \int_{\R^2} u(x+y,v+w) \delta_t(dy) \mu_t(dw),
\end{align}
where $\delta_t(\cdot)$ denotes the Dirac point mass at $t$, and $\mu_t(dw)$ is the one-dimensional distribution of the subordinator with the Laplace exponent \eqref{bernstinsimul}. In particular, this is a convolution semigroup induced by a L\'evy process, see e.g. \cite[Examples 1.3 and 1.17]{schillinglevy}, denoted by $(A_t, \sigma_t),\, t\geq0$, where $A_t$ is a pure drift, and $\sigma_t$ is an independent subordinator, both not necessarily started at zero. We construct this process as the canonical one in the sense of \cite[Chapter III.1]{revuz2013continuous}, and thus on the filtered probability spaces $\l \Omega, \mathcal{F}_\infty, (\mathcal{F}_t)_{t\geq0}, \pr^{(x,v)} \r$, where $\mathcal{F}_t$ is the natural filtration, $\mathcal{F}_\infty \coloneqq \bigvee_t \mathcal{F}_t$, and $\pr^{(x,v)}$ is the (unique) probability measure such that $\pr^{(x,v)} (A_0=x, \sigma_0 = v)=1$. 

Now, we can define, as above, the processes $L = (L_t,\, t\geq0)$, $H= (H_t,\, t\geq0)$ and $D= (D_t,\, t\geq0)$, paying attention that under $\pr^{(x,0)}$ one has that $L_0 = 0$, $H_0=0$ and $D_0=0$, while under $\pr^{(x,v)}$ for $v\in\R$, one could have different starting points and thus $L_t$ is the inverse of a subordinator started at $\sigma_0 \in\R$ and $H_t$ and $D_t$ are the corresponding undershooting and overshooting.

It turns out that the process $A_{L_t}$ has a certain kind of semi-Markov property. In particular, it turns out that $(A_{L_t}, \gamma_t), t\geq0$, has the simple Markov property and can be inserted in the theory developed in \cite[Section 4]{Meerschaert2014}. For a general discussion of definitions of semi-Markov properties, see \cite[Chapter III]{harlamov}. We can apply the theory of \cite[Section 4]{Meerschaert2014} also to the process $(A_{L_t}, \Gamma_t)$, and it turns out that $({A_{L_t}}, \Gamma_t)$ is a Hunt process, i.e. it is a strong Markov c\`adl\`ag process which is quasi-left-continuous. In {Theorems} \ref{markovtransprob} and \ref{markovtransprob2}, we formalize the previous assertions.

It is clear that under $\pr^{(x,0)}$ the processes $L$, $\gamma$, and $\Gamma$ are just the inverse of a subordinator, the so-called customary age, and the so-called remaining lifetime process, respectively, while $A_{L_t}$ is just the process $x+L_t$. First recall that $L_t$ is the inverse of a strictly increasing subordinator (started at an arbitrary point), so it has continuous trajectories (and so does $A_{L_t}$). It is easy to see that the process $\Gamma_t = D_t-t$ is right-continuous, while the age process $\gamma_t = t-H_t$ is left-continuous.
The discontinuity points of the process $\gamma_t,\, t\geq0$, occur at times in the range of $\sigma$ that are isolated on their left, i.e. at times $t\in \{ \sigma_s,\, s\in \mathcal{J} \}$, where $\mathcal{J}= \{ s\in [0, +\infty):\, \sigma_s-\sigma_{s-}>0 \}$. Hence, $\gamma_t$ has no fixed discontinuities and $  \gamma_{t+} = \gamma_t $ a.s. for any $t\geq0$, provided that $\sigma_0=0$. In practice, this implies that if we consider the right-continuous version
\begin{align}
   (L_t, \gamma_t)^+ \, \coloneqq \, \lim_{\delta\to0+} (L_{t+\delta}, \gamma_{t+\delta}),
\end{align}
then we have $({L_t}, \gamma_t) = (L_t, \gamma_t)^+$, $\pr^{(x,0)}$-a.s., for all fixed $t\geq0$. However, conditioning on $\gamma_0=v$, under $\pr^{(x,y)}$ with $y<0$, there could be a positive probability that $t$ is a discontinuity point of $\gamma$, e.g. if the L\'evy measure $\nu$ has {atoms}. {We will consider here the (customary) undershooting process $H_t=\sigma_{L_t-}$ as it is defined in classical references (e.g. \cite{bertoin1996}) and not its right-continuous version, and thus also the left-continuous process $\gamma_t=t-H_t$. This is helpful as we first write an algorithm for sampling $(L_t, \gamma_t)$, $t \geq 0$, by using the fact that it is a Markov process. The Markov property here comes from the application of a theorem in \cite{Meerschaert2014} that applies to the left-continuous processes; although we are convinced that the generalization of this Markov property to the right-continuous version is not hard, but out of the scope of this paper.}

First, we formalize the (simple) Markov property of the process $(A_{L_t}, \gamma_t)$, $t\geq0$.

\begin{theorem}\label{markovtransprob}
    The process $(A_{L_t}, \gamma_t),\, t\geq0$, is a homogeneous Markov process with the transition probabilities satisfying
    \begin{align}
        &p_{t}^\gamma((x,0);dy,dw) = \pr^{(x,0)} (A_{L_t} \in dy, \gamma_t \in dw), \\
        \begin{split}\label{trans21}
            &p_t^\gamma((x,v); dy,dw) \, = \, \delta_x(dy) \delta_{v+t}(dw) \frac{\nu[v+t, +\infty) }{\nu[v, +\infty)}  + \int_{[v,v+t)} p_{v+t-s}^\gamma((x,0);dy, dw) \, \frac{\nu(ds)}{\nu[v, +\infty)},
        \end{split}
    \end{align}
    for all $x\in \mathbb{R}$ and $v>0$.
\end{theorem}
\begin{proof}
    To prove the result we resort to \cite[Theorem 4.1]{Meerschaert2014}. 
    From the Fourier symbol
    \begin{align}
        \int_\R \int_\R e^{i\xi_1 y+i\xi_2 w} \delta_1(dy) \mu_1(dw) \, = \, e^{i\xi_1-\phi(-i\xi_2)}, 
    \end{align}
    where $\phi$ is the Bernstein function \eqref{bernstinsimul} associated with $\sigma$, and by using \cite[Theorem 31.5]{sato1999}, it is plain that the semigroup $T_t$, $t\geq0$, is generated by $(G, \text{Dom}(G))$ such that
    \begin{align}
        & G\mid_{C_0^2 (\R^2)} h(x,v) 
        = \,  \frac{\partial}{\partial x} h(x,v) + b\frac{\partial}{\partial v} h(x,v)+ \int_{\R^2} (h(x+y,v+w) - h(x,w)) \delta_0(dy)\nu(dw),
    \label{generat}
    \end{align}
    where $\nu(\cdot)$ is the L\'evy measure of the subordinator $\sigma$ (i.e., it is supported on $(0, +\infty)$). It follows that our process is (also) a jump diffusion in the sense of \cite{applebaum} and also in the sense of \cite{Meerschaert2014}, i.e. the generator \eqref{generat} has the form of \cite[Formula (2.5)]{Meerschaert2014} with the jump kernel
    \begin{align}
        K(x,v;dy,dw) = \delta_0(dy) \nu(dw).
    \label{jumpkernel}
    \end{align}
    {We want to apply \cite[Theorem 4.1]{Meerschaert2014} and we should check that the process associated with $T_t$, $t\geq0$, is Markov additive in the sense of \cite{cinlarma}, i.e. that the future depends only on the current state of $A_t$; but in our case this is clear since $(A_t, \sigma_t)$ is a L\'evy process started at $(x,v)$.}
    Recall that the process $(A_t, \sigma_t), t\geq0$, generates the semigroup $T_t$, $t\geq0$, where the first coordinate $A_t$ is a pure drift, and that $L_t$ is the inverse of the second coordinate $\sigma_t$. Thus, $A_{L_t}$ satisfies $A_{L_t}=x+L_t$. The result then follows by an application of \cite[Theorem 4.1]{Meerschaert2014}. Indeed, this theorem says that $(A_{L_t}, \gamma_t)$ is a homogeneous simple Markov process associated with a semigroup of operators $P_t$, $t\geq0$, on the space of bounded Borel functions, such that for $v>0$ it holds that
    \begin{align}
            & P_t h(x,0) \, = \, \E^{(x,0)}[h(A_{L_t}, \gamma_t)], \\
        \begin{split}
            & P_t h(x, v) \, = \, h(x,v+t)\frac{K(x,t;\R, [v+t, +\infty))}{K(x,t; \R, [v, +\infty))}  + \int_\R \int_{[v,{v+t})} P_{v+t-s}h(x+y,0) \frac{K(x,t;ds, dy)}{K(x,t; \R, [v, +\infty))},
        \end{split}
    \end{align}
     from which {the claim follows}. 
\end{proof}
{The transition probabilities in \eqref{trans21} clearly describe the form of the trajectories of the process, where the intervals of constancy are induced by the jumps of the subordinator. Indeed, \eqref{trans21} has the form of a renewal equation. The first term accounts for the transition probabilities when the process remains within an interval of constancy, i.e., the ratio $\nu[v+t,+\infty)/\nu[v,+\infty)$ gives the probability that the process will be stuck for at least $v+t$ given that it has already persisted for $v$; while the second (integral) term instead captures transitions starting from a renewal point — namely, the moment when the process is once again free to evolve.}

Now we formalize the strong Markov property of $(A_{L_t}, \Gamma_t)$, $t\geq0$.

\begin{theorem}\label{markovtransprob2}
     The process $(A_{L_t}, \Gamma_t)$, $t\geq0$, is a Hunt process with the transition probabilities satisfying
     \begin{align}
        & p_t^\Gamma ((x,0);dy, dw) \, = \, \pr^{(x,0)} (A_{L_t} \in dt, \Gamma_t \in dw) \notag, \\
        & p_t^\Gamma ((x,r);dy, dw) \, = \, \1_{[r>t]}\delta_x(dy)\delta_{r-t}(dw) + \1_{[r \leq t]} p_{t-r}^\Gamma ((x,0);dy,dw),
        \label{trans1129}
     \end{align}
     for all $x\in\R$ and $r>0$.
\end{theorem}
\begin{proof}
    The proof follows similarly as the one of Theorem \ref{markovtransprob}. Indeed, \cite[Theorem 3.1 \& Theorem 4.1]{Meerschaert2014} imply that $(A_{L_t}, \Gamma_t)$ is a Hunt process associated with a semigroup of operators $Q_t$, $t\geq0$, on the space of bounded Borel measurable functions such that
    \begin{align}
        &Q_t h(x,0) \, = \, \E^{(x,0)} [h(A_{L_t}, \Gamma_t)],\label{semrepr0}\\
        &Q_t h(x,r) \, = \, \1_{[t <r]} h(x,r-t) + \1_{[t\geq r]}  Q_{t-r}h(x,0),
    \label{semrepr}
    \end{align}
    for $x\in\R$ and $r>0$. It follows from \eqref{semrepr0} and \eqref{semrepr} that
     \begin{align}
         & p_t^\Gamma ((x,0); dy, dw) \, = \, \pr^{(x,0)} (A_{L_t} \in dy, \Gamma_t \in dw), \\
         &p_t^\Gamma ((x,r);dy, dw) \, = \, \1_{[r>t]}\delta_x(dy)\delta_{r-t}(dw) + \1_{[r\leq t]} p_{t-r}^\Gamma ((x,0);dy,dw),
     \end{align}
    for $x\in\R$ and $r>0$, which concludes the proof.
\end{proof}
{The transition probabilities \eqref{trans1129} have a clear heuristic interpretation. The first term represents the transitions when the remaining time in the current position is $r$, i.e., the process at time $t$ is still stuck in the same position. The second term represents the transitions from the next renewal point, i.e., after time $r$ when the process is left free to move.}

From Theorems \ref{markovtransprob} and \ref{markovtransprob2} it is clear that we can use the simple Markov property to obtain our algorithm for the exact sampling of paths of $(L,\gamma)$ and $(L,\Gamma)$. Indeed, denote by
\begin{align}
    &  p_{t_1, \cdots, t_n}^\gamma((x,v);B_1, \cdots, B_n), \\
    & p_{t_1, \cdots, t_n}^\Gamma((x,v);B_1, \cdots, B_n),
\end{align}
the distribution of vectors \eqref{vectors} and \eqref{vectors2} for $B_i \in \mathcal{B} (\R^2)$, conditioning on $L_0=x_0$ and $\gamma_0=v_0$.
Then, by Theorem \ref{markovtransprob} we have that, for $ y_0=x_0$, $w_0=v_0$,
\begin{align}
    p_{t_1, \cdots, t_n}^\gamma ((x_0,v_0);B_1, \cdots, B_n) \, = \,  \int_{B_1 \times \cdots \times B_n} \prod_{i=1}^n p_{t_i-t_{i-1}}^\gamma ((y_{i-1}, w_{i-1}); dy_i, dw_i),
    \label{jointvector}
\end{align}
and by Theorem \ref{markovtransprob2} we get, for $y_0=x_0$, $w_0=v_0$,
\begin{align}
    p_{t_1, \cdots, t_n}^\Gamma ((x_0,v_0);B_1, \cdots, B_n) \, = \,  \int_{B_1 \times \cdots \times B_n} \prod_{i=1}^n p_{t_i-t_{i-1}}^\Gamma ((y_{i-1}, w_{i-1}); dy_i, dw_i).
    \label{jointvector2}
\end{align}
Here we show that we can sample from the transition probabilities appearing in \eqref{jointvector} and \eqref{jointvector2}. We do this by showing that there exists r.v.'s whose distributions under $\pr^{(x,0)}$ are given by such probabilities, with an appropriate choice of parameters.
\begin{proposition}\label{propoX}
    Consider a family of random variables {$\{\mathcal{S}_v:\, v>0\}$ independent from $\sigma$ such that
    \begin{align}
        \pr^{(x,0)}(\mathcal{S}_v\in ds)\coloneqq \frac{\nu(ds)}{\nu[v, +\infty)}\1_{[s\geq v]},\quad v>0,\label{densityS}
    \end{align}
    }and a random vector
    \begin{align}
        \mathcal{X}_{v,x, t}^s\coloneqq (x, v+t) \1_{[s \geq v+t]}  + (x+ L_{v+t-s}, \gamma_{v+t-s}) \1_{[s < v+t]}
    \end{align}
    with the parameters $x\in\R$, $t>0$, $v>0$, $s>0$. Let $p_t^\gamma ((x,v);dy,dw)$ be the transition probabilities defined in Theorem \ref{markovtransprob}. Then it holds that 
    \begin{align}
        p_{t}^\gamma((x,v);dy,dw) \, = \, \pr^{(x,0)} \l \mathcal{X}^{\mathcal{S}_v}_{v,x,t} \in (dy, dw) \r.
    \end{align}
    Furthermore, let
    \begin{align}
        \mathcal{Y}_{r,x,t} \coloneqq (x,r-t) \1_{[r>t]} + \1_{[r \leq t]} (x+L_{t-r}, \Gamma_{t-r}), 
    \end{align}
    then
    \begin{align}
        p_t^\Gamma ((x,r); dy, dw) \, = \, \pr^{(x,0)} ( \mathcal{Y}_{r,x,t} \in (dy, dw) ).
    \end{align}
\end{proposition}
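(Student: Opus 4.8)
The plan is to verify both displayed identities by a direct conditioning argument, matching term by term the transition kernels exhibited in Theorems \ref{markovtransprob} and \ref{markovtransprob2}. Two preliminary remarks make the bookkeeping transparent. First, $\mathcal{S}_v$ is well defined for every $v>0$: although $\sigma$ has infinite activity, $\nu[v,+\infty)<+\infty$ for $v>0$ (a L\'evy measure integrates $1\wedge s$), so $\nu_v$ in \eqref{densityS} is a genuine probability measure supported on $[v,+\infty)$. Second, under $\mathds{P}^{(x,0)}$ the drift coordinate satisfies $A_u=x+u$, hence $A_{L_u}=x+L_u$; consequently the vector $(x+L_u,\gamma_u)$ occurring in the definition of $\mathcal{X}^s_{v,x,t}$ coincides with $(A_{L_u},\gamma_u)$ under $\mathds{P}^{(x,0)}$, which by Theorem \ref{markovtransprob} has law $p^\gamma_u((x,0);\cdot,\cdot)$; similarly $(x+L_u,\Gamma_u)$ equals $(A_{L_u},\Gamma_u)$, of law $p^\Gamma_u((x,0);\cdot,\cdot)$ by Theorem \ref{markovtransprob2}.

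For the first identity I condition on $\mathcal{S}_v$, which is independent of $\sigma$ and hence of the whole family $\{(L_u,\gamma_u):u\ge0\}$. On $\{\mathcal{S}_v\ge v+t\}$ the vector $\mathcal{X}^{\mathcal{S}_v}_{v,x,t}$ equals the deterministic point $(x,v+t)$, and $\mathds{P}^{(x,0)}(\mathcal{S}_v\ge v+t)=\nu[v+t,+\infty)/\nu[v,+\infty)$, which yields the first summand $\delta_x(dy)\delta_{v+t}(dw)\,\nu[v+t,+\infty)/\nu[v,+\infty)$. On the complementary event, which (since $\mathcal{S}_v\ge v$) is $\{v\le\mathcal{S}_v<v+t\}$, conditionally on $\mathcal{S}_v=s$ the vector equals $(x+L_{v+t-s},\gamma_{v+t-s})$; by independence this conditional law is the unconditional law of $(A_{L_{v+t-s}},\gamma_{v+t-s})$ under $\mathds{P}^{(x,0)}$, namely $p^\gamma_{v+t-s}((x,0);dy,dw)$. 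Integrating against $\nu_v(ds)$ over $s\in[v,v+t)$ produces $\int_{[v,v+t)}p^\gamma_{v+t-s}((x,0);dy,dw)\,\nu(ds)/\nu[v,+\infty)$, and adding the two contributions is exactly the formula for $p^\gamma_t((x,v);dy,dw)$ in Theorem \ref{markovtransprob}.

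For the second identity no auxiliary randomization is needed, since $r,t$ are deterministic. If $r>t$, then $\mathcal{Y}_{r,x,t}=(x,r-t)$ is deterministic with law $\delta_x(dy)\delta_{r-t}(dw)$; if $r\le t$, then $\mathcal{Y}_{r,x,t}=(x+L_{t-r},\Gamma_{t-r})=(A_{L_{t-r}},\Gamma_{t-r})$ under $\mathds{P}^{(x,0)}$, with law $p^\Gamma_{t-r}((x,0);dy,dw)$ by Theorem \ref{markovtransprob2}. Weighting these two cases by $\mathds{1}_{[r>t]}$ and $\mathds{1}_{[r\le t]}$ gives precisely the two terms of $p^\Gamma_t((x,r);dy,dw)$.

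The only step demanding genuine care is the conditioning in the second paragraph: one must justify the tower property / Fubini that lets one replace the conditional law of $(x+L_{v+t-s},\gamma_{v+t-s})$ given $\mathcal{S}_v=s$ by its unconditional law, which rests entirely on the assumed independence of $\mathcal{S}_v$ from $\sigma$. Everything else is elementary: a split according to two disjoint events, identification of Dirac masses, and the change of variable $s\mapsto v+t-s$ in the integral term. I therefore anticipate no substantive obstacle.
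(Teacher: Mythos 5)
Your proof is correct and follows essentially the same route as the paper's: condition on the independent randomization $\mathcal{S}_v$, split into the two events $\{\mathcal{S}_v\ge v+t\}$ and $\{v\le\mathcal{S}_v<v+t\}$, use independence to replace conditional by unconditional laws, and match the resulting expression term by term with the kernel from Theorem \ref{markovtransprob} (and similarly, trivially, for $\mathcal{Y}_{r,x,t}$ via Theorem \ref{markovtransprob2}). The only minor slip is your closing mention of a ``change of variable $s\mapsto v+t-s$'' --- none is actually needed, since the integral you derive already appears verbatim in the transition-probability formula --- but this does not affect correctness.
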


\begin{proof}
    Under $\pr^{(x,0)}$, the  process $\sigma$ is a subordinator with $\sigma_0=0$ a.s., and $L_t$ is the inverse of $\sigma$, so $L_0=0$ and $\gamma_0=0$, a.s. It follows that
    \begin{align}
        \pr^{(x,0)} & ( \mathcal{X}_{x,v,t}^{\mathcal{S}_v} \in (dy, dw) ) \, = \, \delta_x(dy) \delta_{v+t}(dw) \pr^{(x,0)} (\mathcal{S}_v \geq v+t) \notag\\
        & + \int_{[v,v+t)} \pr^{(x,0)} ( (x+L_{v+t-s}, \gamma_{v+t-s})\in(dy, dw) \mid \mathcal{S}_v=s ) \pr^{(x,0)} (\mathcal{S}_v \in ds) \notag\\
        = \, & \delta_x(dy) \delta_{v+t}(dw) \frac{\nu[v+t, +\infty)}{\nu[v, +\infty)} + \int_{[{v},{v+t})} p_{v+t-s}((x,0); dy, dw) \frac{\nu(ds)}{\nu[v, +\infty)},
    \label{315}
    \end{align}
    where we used the independence between $\mathcal{S}_v$ and $\sigma$, the form of the distribution of $\mathcal{S}_v$, and the fact that $\gamma_0=0$ as explained above. The result follows by observing that \eqref{315} coincides with the transition probabilities obtained in Theorem \ref{markovtransprob}.
    
    The second result follows immediately by Theorem \ref{markovtransprob2}. 
\end{proof}

The previous proposition, together with a simple conditioning argument, suggest the following result.

\begin{proposition}\label{propojoint}
    Let $(\Omega, \mathcal{G}, P)$ be an arbitrary probability space supporting $\mathcal{S}_{v}^i$, $^i\mathcal{X}_{v_i,x_i,t_i}^{s_i}$ and $^i\mathcal{Y}_{r_i,x_i,t_i}$, $i=1, \cdots, n$, that are independent copies of the random variables introduced in Proposition \ref{propoX}. 
    It is true that
    \begin{align}
          p_{t_1, \cdots, t_n}^\gamma ((x_0,v_0);B_1, \cdots, B_n) \, = \,  \int_{B_1 \times \cdots \times B_n} \prod_{i=1}^n f_{t_i-t_{i-1}}^\gamma ((y_{i-1}, w_{i-1}); dy_i, dw_i)
    \end{align}
    where $y_0=x_0$, $w_0=v_0$, and, for $i \geq 2$,
    \begin{align}
       & f_{t_i-t_{i-1}}^\gamma ((y_{i-1}, w_{i-1}); dy_i, dw_i) \notag\\
       = \,& P \l \, ^i\mathcal{X}^{\mathcal{S}^i_{w_{i-1}}}_{w_{i-1}, y_{i-1}, t_i-t_{i-1}} \in (dy_i, dw_i) \mid \, ^{i-1}\mathcal{X}^{\mathcal{S}^{i-1}_{w_{i-2}}}_{w_{i-2}, y_{i-2}, t_{i-1}-t_{i-2}} = (y_{i-1}, w_{i-1})  \r \notag \\
        = \, & P \l \, ^i\mathcal{X}^{\mathcal{S}^i_{w_{i-1}}}_{w_{i-1}, y_{i-1}, t_i-t_{i-1}} \in (dy_i, dw_i) \r.
    \end{align}
    Further, it is true that
    \begin{align}
          p_{t_1, \cdots, t_n}^\Gamma ((x_0,v_0);B_1, \cdots, B_n) \, = \,  \int_{B_1 \times \cdots \times B_n} \prod_{i=1}^n f_{t_i-t_{i-1}}^\Gamma ((y_{i-1}, w_{i-1}); dy_i, dw_i),
    \end{align}
    where $y_0=x_0$, $w_0=v_0$, and, for $i \geq 2$,
    \begin{align}
       & f_{t_i-t_{i-1}}^\Gamma ((y_{i-1}, w_{i-1}); dy_i, dw_i) \notag \\ = \,& P \l \, ^i\mathcal{Y}_{w_{i-1}, y_{i-1}, t_i-t_{i-1}} \in (dy_i, dw_i) \mid \, ^{i-1}\mathcal{Y}_{r_{i-2}, x_{i-2}, t_{i-1}-t_{i-2}} = (y_{i-1}, w_{i-1})  \r\notag \\
       = \, &P \l \, ^i\mathcal{Y}_{w_{i-1}, y_{i-1}, t_i-t_{i-1}} \in (dy_i, dw_i)  \r.
    \end{align}
\end{proposition}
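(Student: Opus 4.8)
The plan is to combine the single-time representations from Proposition \ref{propoX} with the Markov/Chapman--Kolmogorov factorizations \eqref{jointvector} and \eqref{jointvector2} established via Theorems \ref{markovtransprob} and \ref{markovtransprob2}. The starting observation is that the transition kernels $p_{t_i-t_{i-1}}^\gamma((y_{i-1},w_{i-1});\cdot,\cdot)$ appearing in \eqref{jointvector}, and similarly the kernels $p_{t_i-t_{i-1}}^\Gamma$ in \eqref{jointvector2}, are exactly the laws realized in Proposition \ref{propoX}. So it suffices to show that the products of the $f^\gamma$'s (resp.\ $f^\Gamma$'s) defined in the statement coincide with the products of the corresponding $p^\gamma$'s (resp.\ $p^\Gamma$'s), and for that I only need the pointwise identities $f_{t_i-t_{i-1}}^\gamma((y_{i-1},w_{i-1});dy_i,dw_i) = p_{t_i-t_{i-1}}^\gamma((y_{i-1},w_{i-1});dy_i,dw_i)$ and the analogous one for $\Gamma$, for each $i\geq 2$.

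The key step is to justify the second equality in each displayed chain, i.e.\ that the conditioning on the value of the $(i-1)$-th random object can be dropped. This is precisely where the \emph{independence} built into the setup of Proposition \ref{propojoint} is used: the random variables $\mathcal{S}^i_{v}$ and the processes $L,\gamma,\Gamma$ entering $^i\mathcal{X}$ and $^i\mathcal{Y}$ are, for distinct $i$, independent copies living on the product space $(\Omega,\mathcal{G},P)$. Hence $^i\mathcal{X}^{\mathcal{S}^i_{w_{i-1}}}_{w_{i-1},y_{i-1},t_i-t_{i-1}}$ is, once the parameters $w_{i-1},y_{i-1}$ are fixed, independent of the $\sigma$-algebra generated by $^{i-1}\mathcal{X}^{\mathcal{S}^{i-1}_{w_{i-2}}}_{w_{i-2},y_{i-2},t_{i-1}-t_{i-2}}$; but the subtlety is that $w_{i-1}$ is itself a function of the $(i-1)$-th object. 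I would handle this by disintegration: conditionally on $^{i-1}\mathcal{X}=(y_{i-1},w_{i-1})$, the $i$-th object is built from the independent copy $(\mathcal{S}^i,L^i,\gamma^i)$ with deterministic parameters $(y_{i-1},w_{i-1})$, so its conditional law is $\mathds{P}^{(y_{i-1},0)}(\mathcal{X}^{\mathcal{S}_{w_{i-1}}}_{w_{i-1},y_{i-1},t_i-t_{i-1}}\in\cdot)$, which by Proposition \ref{propoX} equals $p_{t_i-t_{i-1}}^\gamma((y_{i-1},w_{i-1});\cdot)$ — independently of the conditioning value beyond its role as parameter. This is exactly the "simple conditioning argument" alluded to before the statement.

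With this in hand, I would plug the identities $f^\gamma = p^\gamma$ and $f^\Gamma = p^\Gamma$ into \eqref{jointvector} and \eqref{jointvector2}, respectively, and the claimed formulas follow at once. The case $i=1$ is the base case: one takes $y_0 = x_0$, $w_0 = v_0$ and uses Proposition \ref{propoX} directly for $p_{t_1}^\gamma((x_0,v_0);\cdot)$ and $p_{t_1}^\Gamma((x_0,v_0);\cdot)$, with the convention $t_0 = 0$. The main obstacle I anticipate is the careful measure-theoretic bookkeeping of the "random parameter" issue in the paragraph above — making precise that conditioning the $i$-th block on the realized value of the $(i-1)$-th block leaves the $i$-th block's law equal to the transition kernel evaluated at that value, uniformly enough to integrate against $\prod_{i}$. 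Everything else (Chapman--Kolmogorov, the explicit kernels) is already supplied by the preceding results, so this is really a routine but slightly delicate measurability/independence argument rather than a computational one.
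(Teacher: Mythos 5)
Your proposal is correct and takes essentially the same approach as the paper, which proves Proposition \ref{propojoint} in one line by citing ``a simple conditioning argument and Proposition \ref{propoX}''. You are simply unpacking that sentence: showing $f^\gamma = p^\gamma$ (and $f^\Gamma = p^\Gamma$) via Proposition \ref{propoX} after dropping the conditioning using independence of the copies indexed by $i$, and then substituting into the Chapman--Kolmogorov factorizations \eqref{jointvector}--\eqref{jointvector2}.
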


\begin{proof}
    Since the probability space here is inessential, the proof follows by a simple conditioning argument and Proposition \ref{propoX}.
\end{proof}

The random variables $^i\mathcal{X}_{x,v,t}^s$ and $^i\mathcal{Y}_{x,v,t}$ appearing in Propositions \ref{propoX} and \ref{propojoint} are easily exactly simulated provided that $\sigma$ falls in the class of subordinators for which we can exactly sample {r.v.'s $\mathcal{S}_v$, and the couple $(L_t, H_t)$ or the couple $(L_t,D_t)$ for arbitrary $t>0$, conditionally on $\sigma_0=0$. In Section \ref{sec:prelim}, we discuss in detail the algorithms which indeed provide such samples. At this point, we briefly note that \cite[Algorithm 1]{Jorge2023b} gives an exact sample of the triplet $(L_t,H_t,D_t)$, under $\sigma_0=0$, for a fairly large class of subordinators. These observations inspire} the Algorithms \ref{alg:2} and \ref{alg:3} which give a method to exactly sample the random vectors
\begin{align}
   & ( (A_{L_{t_1}}, \gamma_{t_1} ), \cdots, (A_{L_{t_n}}, \gamma_{t_n}) ), \label{vectorsfin} \\
   & ( (A_{L_{t_1}}, \Gamma_{t_1} ), \cdots, (A_{L_{t_n}}, \Gamma_{t_n}) ), \label{vectors2fin}
\end{align}
for any choice of $0 < t_1 < \cdots < t_n$, $n \in \mathbb{N}$. Note again that, since $A_t$ is pure drift started at $x$ under $\pr^{(x,v)}$, the processes $A_{L_t}$ is just the inverse $L_t$ of the second coordinate $\sigma_t$, translated by $x$. {It is also important to emphasize that instead of e.g. \cite[Algorithm 1]{Jorge2023b} which can be used in Algorithms \ref{alg:2} and \ref{alg:3}, we can use any algorithm that provides exact samples of $(L_t, H_t)$ or  $(L_t,D_t)$, respectively.} {In particular, since \cite[Algorithm 1]{Jorge2023b} gives a sample of the first passage time, the undershooting and the overshooting to a quite general curve, we will develop an easier algorithm to sample $(L_t, D_t)$ alone (without the coordinate $H_t)$ by mixing techniques from \cite{Jorge2023b} and \cite{Dassios2020}, see Algorithm \ref{alg:simplif} as well as Algorithm \ref{alg:simplif_2}.}

\begin{algorithm}[h]
    \caption{generating vectors \eqref{vectorsfin} conditionally on $A_{L_{t_0}}\in\R$, $\gamma_{t_0}\ge 0$}\label{alg:2}
    \KwData{
    $A_{L_{t_0}}\in\R$, $\gamma_{t_0}\geq0$, and $t_0 <t_1 <\cdots <t_{n-1} <t_n$, $n\in\N$.
    }
    \For{$i\leftarrow 1$ \KwTo $n$}{
        $(x,v) \gets (A_{L_{t_{i-1}}}, \gamma_{t_{i-1}})$\\
		\eIf{$v=0$}{
                Generate $(L_{t_i -t_{i-1}}^i, \gamma_{t_i-t_{i-1}}^i)$ from Algorithm e.g. \ref{alg:simplif_2}\\
			$(A_{L_{t_i}}, \gamma_{t_i}) \gets (x+ L_{t_i - t_{i-1}}^i, \gamma_{t_i-t_{i-1}}^i)$
		}{
			Generate $\mathcal{S}^{i}_v \sim \frac{\nu(\cdot)}{\nu[v,+\infty)}\1_{\cdot > v}$\\
			\eIf{$\mathcal{S}^{i}_v \geq v + t_{i}-t_{i-1} $}{
				$(A_{L_{t_i}}, \gamma_{t_i}) \gets (x, v + t_{i}-t_{i-1})$
			}{
				Generate $( L_{t_i -t_{i-1} +v -\mathcal{S}^{i}_v}^i, \gamma_{t_i -t_{i-1} +v -\mathcal{S}^{i}_v}^i )$ from e.g. Algorithm \ref{alg:simplif_2}\\
				$(A_{L_{t_i}}, \gamma_{t_i}) \gets ( x + L_{t_i -t_{i-1} +v -\mathcal{S}^{i}_v}^i, \gamma_{t_i -t_{i-1} +v -\mathcal{S}^{i}_v}^i )$
			}
		}
	}
\end{algorithm}

\begin{algorithm}[ht]
    \caption{generating vectors \eqref{vectors2fin} conditionally on $A_{L_{t_0}}\in\R$, $\Gamma_{t_0}\geq0$}\label{alg:3}
	\KwData{
    $A_{L_{t_0}}\in\R$, $\Gamma_{t_0}\geq0$, and $t_0 <t_1 <\cdots <t_{n-1} <t_n$, $n\in\N$.
	}
    \For{$i\leftarrow 1$ \KwTo $n$}{
		$(x,r) \gets ( A_{L_{t_{i-1}}}, \Gamma_{t_{i-1}} )$ \\
		\eIf{$r > t_{i}-t_{i-1}$}{
			$(A_{L_{t_i}}, \Gamma_{t_i}) \gets ( x, r -(t_i - t_{i-1}) )$
			}{
			Generate $( L_{t_i -t_{i-1} -r}^i , \Gamma_{t_i -t_{i-1} -r}^i )$ from e.g. Algorithm \ref{alg:simplif} or Algorithm \ref{alg:simplif_2}\\
			$(A_{L_{t_i}}, \Gamma_{t_i}) \gets ( x + L_{t_i -t_{i-1} -r}^i, \Gamma_{t_i -t_{i-1} -r}^i )$
			}
	}
\end{algorithm}

\begin{remark}
    {Algorithm \ref{alg:3} can be generalized to sample the finite-dimensional distributions of the triplet $(\gamma_t, A_{L_t}, \Gamma_t)$ even if we do not prove that this process is Markovian. Indeed, subordinators enjoy the regenerative property (\cite[Chapter 2]{bertoin1999}), and this permits to sample from the conditional probabilities
    \begin{align}
        \mathds{P}^{(x,0)} \l \gamma_{t_2} \in dx_1, A_{L_{t_2}} \in dy_2, \Gamma_{t_2} \in dz_2 \mid \gamma_{t_1} = x_1, A_{L_{t_1}} = y_1, \Gamma_{t_1} = z_1 \r,
    \end{align}
    where $t_1<t_2$, as we show below.
    
    The regenerative property states that the shifted range $\overline{\{ \sigma_{L_t+y}-\sigma_{L_t}: y \geq 0 \}}$ is independent of $\mathcal{F}_{L_t}$, where $(\mathcal{F}_s, \,s \geq 0)$ denotes the natural filtration of $\sigma$, and identically distributed as the original range. Applying this, under $\mathds{P}^{(x,0)}$ one has that
    \begin{align*}
         \mathds{P}^{(x,0)}& \l \gamma_{t_2} \in dx_1, A_{L_{t_2}} \in dy_2, \Gamma_{t_2} \in dz_2 \mid \gamma_{t_1} , A_{L_{t_1}} , \Gamma_{t_1}  \r  \\ = \,& \mathds{E}^{(x,0)} \left[ \mathds{1}_{[\sigma_{L_{t_1}}\ge   t_2]}\mathds{1}_{[\gamma_{t_2} \in dx_2, A_{L_{t_2}} \in dy_2, \Gamma_{t_2} \in dz_2]}\mid \gamma_{t_1} , A_{L_{t_1}} , \Gamma_{t_1}\right]\\& \quad + \mathds{E}^{(x,0)} \left[\mathds{1}_{[\sigma_{L_{t_1}} < t_2]} \mathds{1}_{[\gamma_{t_2} \in dx_2, A_{L_{t_2}} \in dy_2, \Gamma_{t_2} \in dz_2]} \mid \gamma_{t_1} , A_{L_{t_1}} , \Gamma_{t_1}\right]  \\
      = \, &  \mathds{1}_{[\Gamma_{t_1}\ge  t_2-t_1]} \delta_{\gamma_{t_1}+t_2-t_1}(dx_2) \delta_{A_{L_{t_1}}}(dy_2) 
      \delta_{\Gamma_{t_1}-(t_2-t_1)}(dz_2) \\ &\quad+ \mathds{1}_{[\Gamma_{t_1}< t_2-t_1]} \mathds{P}^{(A_{L_{t_1}},0)} \l \gamma_{t_2-t_1- \Gamma_{t_1}} \in dx_2, A_{L_{t_2-t_1- \Gamma_{t_1}}} \in dy_2, \Gamma_{t_2-t_1- \Gamma_{t_1}} \in dz_2  \r, 
    \end{align*}
    and the latter can be sampled by Algorithm \ref{alg:simplif_2} or \cite[Algorithm 1]{Jorge2023b}. This makes possible to write an algorithm for the finite-dimensional distribution in the same spirit as Algorithm \ref{alg:3}. The Algorithms \ref{alg:2} and \ref{alg:3} require, however, to sample only from the vectors $(L_t, \gamma_t)$ or $(L_t, \Gamma_t)$ and thus they have (potentially) less complexity of the next Algorithm \ref{alg:triplet} that we present for the vectors
    \begin{align}
       ((\gamma_{t_0}, A_{L_{t_0}}, \Gamma_{L_{t_0}}), \cdots, (\gamma_{t_n}, A_{L_{t_n}}, \Gamma_{L_{t_n}})),
        \label{vectorstrip}
    \end{align}
    under $\mathds{P}^{(x,0)}$ and for any $t_0<t_1< \cdots< t_n$, $n \in \mathbb{N}$.
    On the other hand, using Algorithm \ref{alg:simplif_2} or \cite[Algorithm 1]{Jorge2023b} in Algorithm \ref{alg:3} and Algorithm \ref{alg:triplet}, makes Algorithm \ref{alg:triplet} equivalent in terms of complexity to Algorithm \ref{alg:3}.
\begin{algorithm}[ht]
    \caption{generating vectors \eqref{vectorstrip} conditionally on $A_{L_{t_0}}\in\R$, $\Gamma_{t_0}\geq0$}\label{alg:triplet}
	\KwData{
    $\gamma_{t_0}=0,A_{L_{t_0}}=x\in\R$, $\Gamma_{t_0}=0$ and $t_0 <t_1 <\cdots <t_{n-1} <t_n$, $n\in\N$.
	}
    \For{$i\leftarrow 1$ \KwTo $n$}{
		$(r,x,R) \gets (\gamma_{t_{i-1}} ,A_{L_{t_{i-1}}}, \Gamma_{t_{i-1}} )$ \\
		\eIf{$R > t_{i}-t_{i-1}$}{
			$(\gamma_{t_{i}},A_{L_{t_i}}, \Gamma_{t_i}) \gets ( r+t_i-t_{i-1},x, R -(t_i - t_{i-1}) )$
			}{
			Generate $( \gamma_{{t_i-t_{i-1}-R}}^i, L_{t_i -t_{i-1} -R}^i , \Gamma_{t_i -t_{i-1} -R}^i )$ from e.g. Algorithm \ref{alg:simplif_2}\\
			$(\gamma_{L_{t_i}},A_{L_{t_i}}, \Gamma_{t_i}) \gets ( \gamma_{{t_i-t_{i-1}-R}}^i, x + L_{t_i -t_{i-1} -R}^i, \Gamma_{t_i -t_{i-1} -R}^i )$
			}
	}
\end{algorithm}}
\end{remark}

\begin{remark}
    It is clear that if one only wants to sample the trajectory of $t\mapsto L_t$, it is more convenient to use Algorithm \ref{alg:3} as it does not require sampling the random variables $\mathcal{S}_v$. We remark also that the idea of using the overshooting to make a process Markovian and simulate its trajectories has been already used, in a different context, i.e., for a discrete time process, in \cite{aleksunder}.
\end{remark}
\begin{remark}[On sampling paths of {general} tempered subordinators]\label{rem:stable-gen}
    Let $\nu(dx)=\nu(x)dx$ be an absolutely continuous L\'evy measure and $\nu_q(dx)$ its tempered version, i.e. $\nu_q(dx)=\nu_q(x)dx={q(x)}\nu(x)dx$, {where $q(x)$ is a function $0<q(x)<1$}. If samples of the random variable $\mathcal{S}_v$ with the distribution as in \eqref{densityS} are available, then samples of the tempered version, denoted here by $\mathcal{S}_{v,q}$, are also available through the rejection sampling method.

    Recall that the rejection sampling method consists of sampling random variables from a target distribution $\mathcal{S}_{v,q}$ with a probability density function $g(\cdot)$ using a proposal distribution $\mathcal{S}_{v}$ with a probability density function $f(\cdot)$. The idea is to generate random variables from the distribution $\mathcal{S}_{v}$ and accept them with probability $g(\cdot)/Mf(\cdot)$. The constant $M$ represents a finite value that serves as a bound of the likelihood ratio $g(\cdot)/f(\cdot)$ over the support of $\mathcal{S}_{v}$, see \citep[p. 39]{Asmussen2007}. In our case, $M$ is
    \begin{align}
        \frac{g(s)}{f(s)} = \frac{\nu_q(s)/\nu_q[v,+\infty)}{\nu(s)/\nu[v,+\infty)} = \frac{{q(s)}\nu[v,+\infty)}{\nu_q[v,+\infty)} \leq \frac{\nu[v,+\infty)}{\nu_q[v,+\infty)}\eqqcolon M.
    \end{align}
    In other words, to obtain samples of $\mathcal{S}_{v,q}$, first we generate samples $s$ and $u$ from  $\mathcal{S}_v$ and $U(0,1)$, respectively, and accept the value $s$ if $u<g(s)/Mf(s)={q(s)}$.
	
    An example is the tempered stable L\'evy measure. In this case,
    \begin{align}
        \nu(s) = \frac{\alpha}{\Gamma(1-\alpha)}s^{-\alpha-1}, \quad \nu_q(s) = e^{-qs}\nu(s), \quad\text{and}\quad \mathcal{S}_v \sim \alpha v^\alpha s^{-\alpha-1},
    \end{align}
    where $\alpha\in(0,1)$, $q>0$, and $\Gamma$ represents the Gamma function. Furthermore, the cumulative distribution function of $\mathcal{S}_v$, denoted here by $F(s)$, and its inverse are available. Indeed, we have $F^{-1}(s)=v(1-s)^{-1/\alpha}$. Therefore, samples of random variables with distribution $\mathcal{S}_v$ can be obtained through the inverse transform sampling method \citep[p. 37]{Asmussen2007}. This allows us to sample the paths of the $\alpha$-stable inverse subordinator along with the undershoot and overshoot processes and their respective tempered versions using Algorithms \ref{alg:2} and \ref{alg:3}.
\end{remark}

\section{Exact sampling and computational complexity}\label{sec:prelim}
{In this section, we start by explaining what we mean by an exact simulation, and then proceed by calculating the computational complexity of our Algorithms \ref{alg:2} and \ref{alg:3}. In the latter part, we focus on a general algorithms for sampling $(L_t, H_t)$ or $(L_t,D_t)$ for fixed time $t>0$ and under $\sigma_0=0$, which are independent of the current state of the art, as well as on recent methods such as those developed in \cite{Dassios2020,Jorge2023b, Jorge2023a}. {Moreover, in Subsection \ref{sec:compcomp} we give two new simple but fast Algorithms \ref{alg:simplif} and \ref{alg:simplif_2} for sampling $(L_t,D_t)$ and $(L_t, H_t)$, for fixed time $t>0$ and under $\sigma_0=0$, obtained by mixing techniques of  \cite{Dassios2020} with the ones of \cite{Jorge2023b, Jorge2023a}, and we finish by comparing actual computational times of our algorithms to the known ones.}}

For the notion of an exact simulation, we suggest the instructive discussion in \cite[Appendix D]{Jorge2023a} to which we refer to the notion of exact (or practically exact) simulation adopted in this paper. For the reader's convenience, here we summarize the main parts.

Since computers dispose of finite resources (bits of precision) the law of the output of any algorithm will differ from the target law. The more intuitive notion of exact simulation, useful in practice, is the following: A simulation is exact if the distance (associated with a suitable metric) between the desired and simulated law is controlled with machine precision, i.e. with $n\in\N$ bits of precision. In other words, if $I$ is the ideal simulated quantity and $A$ denotes the output of the simulated law, determining if the simulation is exact reduces to check a suitable distance $d( \mathcal{L}(I), \mathcal{L}(A) )$, where $\mathcal{L} (\cdot)$ denotes the law of the corresponding quantity. The choice of the metric $d (\cdot, \cdot)$ is clearly crucial in this context. Several distances might be taken into account but the obvious one is the Wasserstein distance which, however, shows its limitations even in the context of a rejection sampling, see, in particular, \cite[Item I, Appendix D]{Jorge2023a}. Here, we will consider the L\'evy-Prokhorov metric. The latter metricizes weak convergence and in this sense it is a very natural choice; its definition is as follows. For two given probability measures $\mu$ and $\nu$ on a metric space $(\mathbb{S},d)$, define the distance $d_P(\mu,\nu)\coloneqq\inf\{ \varepsilon>0:\, \mu(F)\leq\nu(F^\varepsilon)+\varepsilon\quad\text{for all closed}\quad F\subset\mathbb{S} \}$, where $F^\varepsilon\coloneqq\{ x:\, d(x,F)<\varepsilon \}$, which is called the L\'evy-Prokhorov distance between $\mu$ and $\nu$. Let $\{\pr_\lambda:\lambda\in\Lambda\}$ be the family of probability measures on $\mathbb{S}\times \mathbb{S}$ such that each $\pr_\lambda$ has marginals $\mathcal{L}(I)$ and $\mathcal{L}(A)$, respectively. Then, it is true that  (see \citep{Dudley1968} for details)
\begin{align}
    d_P( \mathcal{L}(A),\mathcal{L}(I) ) \, = \, \inf\{  \varepsilon>0:\, \text{ there exists $\lambda$ with } \pr_\lambda(d(I,A) > \varepsilon) \leq \varepsilon \},
\end{align}
i.e. the distance is the smallest $\varepsilon$ such that the distance between $I$ and $A$ is no more than $\varepsilon$ under a suitable law $\pr_\lambda$, with probability at least $1-\varepsilon$.
Then, in the case of an algorithm that employs $k_i$ times numerical methods of the $i$-th kind, each one controlling the error in the quantity $\varepsilon_0$, the L\'evy-Prokhorov distance is bounded by $\max\{ k_1,\dots,k_i\}\varepsilon_0$. In this paper we adopt this notion of exact simulation (or, practically exact according to \cite{Jorge2023a}): we say that an algorithm is exact if the above distance between the output and the ideal variable can be controlled with the machine precision, i.e. if $d_P( \mathcal{L}(I), \mathcal{L}(A) ) \leq C 2^{-N}$, where $C>0$ is a constant (that can be determined explicitly) and $N$ is the number of bits of precision.

{As we have seen, our Algorithms \ref{alg:2} and \ref{alg:3} do not use approximations}, but their implementation might require to call others that do so, {since} our algorithms require to sample from (the single time) distribution of $(L_t, H_t)$ and $(L_t, D_t)$ under $\sigma_0=0$.  In \cite{Jorge2023a}, the authors provided a method to {sample the triplet $(L_t, H_t,D_t)$} in the case when $\sigma_t$ is a tempered stable subordinator, i.e. its L\'evy density is
\begin{align}
    \nu(s) \, = \, e^{-qs}\nu_\alpha(s),    
\end{align}
for some $q\geq0$ and $\alpha\in(0,1)$, where  $\nu_\alpha(s) \, = \, \frac{\alpha}{\Gamma (1-\alpha)}s^{-\alpha -1}$ is the L\'evy density of the $\alpha$-stable subordinator. 

The method from \cite{Jorge2023a}, called the TSFFP-Algorithm, returns exact samples of the first passage event (i.e. the first-passage time, undershoot, and overshoot) across an absolutely continuous non-increasing function, called the barrier. Hence, it can be used to get an exact sample of the vector $(L_t, H_t, D_t)$ at fixed time $t>0$. The provided method is exact in the sense above, despite employing numerical inversion and integration, see Appendix D.1 in \citep{Jorge2023a} where the authors prove exactness in the L\'evy-Prokhorov metric. 

{The method from  \cite{Jorge2023a} is then generalized to a larger class of subordinators by the same authors in \citep[{Algorithm 1}]{Jorge2023b}.} The available class of subordinators is such that their L\'evy density $\nu$ satisfies $\nu(t)t^{\alpha +1}=1+\mathcal{O}(t)$ as $t\downarrow 0$, where $\alpha\in(0,1)$. Here $f(t)=\mathcal{O}(g(t))$ as $t\downarrow 0$ stands for $\lim\sup_{t\downarrow 0} f(t)/g(t)<+\infty$, where $f$ and $g$ are positive functions. In such cases, the L\'evy density $\nu$ can be written as
\begin{align}
    \nu(t) = \theta e^{-qt}t^{-\alpha -1}\1_{\{ t\leq r\} } + \zeta(t),\quad t>0,
    \label{train}
\end{align}
for some constants $q\geq0$, $\theta \in(0,\infty)$, $r\in(0,\infty]$, and $\zeta$ a density of a finite measure on $(0,+\infty)$. For further details see \citep[Appendix A]{Jorge2023b}. Here we also mention that $\nu(t)t^{\alpha +1}=1+\mathcal{O}(t)$ can be changed to the condition $\nu(t)t^{\alpha +1}=c+\mathcal{O}(t)$, for any constant $c>0$. Indeed, the multiplication of the L\'evy measure by a constant only changes the time scale of the subordinator. At the end of {this} section, one can find examples of such subordinators as well as the simulations of their trajectories.
To the best of our knowledge, the two algorithms mentioned above are the only {peer-reviewed} ones that return exact (in the sense above) samples of the first passage event of a subordinator. {In fact, \cite{Chi2016} also provides the algorithm for the simulation of the triplet but the algorithm has infinite expected running time, see \cite[Section 2.7]{Jorge2023a}. We also mention the recent preprint \cite{chi2025complexity} which provides an exact method to sample the first passage events of a stable subordinator across a barrier, but without using numerical inversion or integration.}

{In the case of sampling only the couple $(L_t,D_t)$ which we may use in Algorithm \ref{alg:3}, we mention the paper \cite{Dassios2020} which gives exact samples of $(L_t,D_t)$ under the assumption that the L\'evy measure of $\sigma$ is truncated stable subordinator. In the following subsection, we will build on this result and give an algorithm which samples $(L_t,D_t)$ where the L\'evy measure can be supported on the whole $(0,+\infty)$.}

The implementation of {Algorithms \ref{alg:2} and \ref{alg:3} in principle requires} running the algorithms above a finite (explicit) number of times and adding some elementary arithmetic operations or rejection sampling. Therefore, the error propagation is easily controlled as, at the end of our algorithm (end of the trajectory), the error is bounded by the error made in {e.g. \cite[TSFFP-algorithm]{Jorge2023a} or \cite[Algorithm 1]{Jorge2023b}}, which can be controlled with machine precision times a finite number (that can be determined by the number of iterations and elementary operations).

To be precise, the L\'evy-Prokhorov distance between the desired and simulated random variable is bounded by the maximum number of iterations between the numerical methods employed, multiplied by the desired error tolerance. {E.g. the closed form of the bound for such distance in the case of TSFFP-Algorithm can be found in \cite[Appendix D.1]{Jorge2023a}.} 

\subsection{Sampling $L_t$, $\gamma_t$ and $\Gamma_t$ at single times, and computational complexity}\label{sec:compcomp}

\subsubsection{Parsimonious algorithms for $(A_{L_t}, \Gamma_{t})$ and $(A_{L_t}, \gamma_{t})$}

In order to run Algorithms \ref{alg:2} and \ref{alg:3}, we only need to sample the couple $(A_{L_t}, \Gamma_t)$ or $(A_{L_t}, \gamma_t, \Gamma_t)$ for a fixed time and under $\sigma_0=0$, so using e.g. \cite[Algorithm 1]{Jorge2023b} to sample these marginals can be considered too general. Indeed, this algorithm samples the first passage time jointly with the undershooting and the overshooting through a quite general curve, while we only need it for a fixed boundary, and in e.g. Algorithm \ref{alg:3} we do not need the undershooting. Hence, we provide here a couple of easier and efficient algorithms to sample $(A_{L_t}, \Gamma_t)$ and $(A_{L_t}, \gamma_t)$, obtained by mixing \cite[Algorithm 4.1]{Dassios2020} with the techniques from \cite{Jorge2023a} and \cite{Jorge2023b}.

In the following, we assume that the subordinator $\sigma$ is a driftless subordinator with the L\'evy measure $\nu$ (as in \eqref{train}) decomposed as
\begin{align}\label{decomp}
    \nu(ds)=\nu^{\alpha,\theta, q}_{r}(s)ds+\zeta(ds)\coloneqq\theta \frac{\alpha e^{-qs}}{\Gamma(1-\alpha)}s^{-\alpha-1}\1_{\{0<s\le r\}}ds+\zeta(ds),\quad s>0,
\end{align}
where $r\in(0,+\infty]$, $\zeta$ is a finite measure on $(0,+\infty)$, and $\alpha\in (0,1)$, $\theta>0$ and $q\in[0,+\infty)$. This restriction arises from the fact that \cite[Algorithm 4.1]{Dassios2020} enables the joint sampling of the inverse and the overshoot of a truncated stable subordinator, whereas the approach developed in \cite{Jorge2023b, Jorge2023a} allow us to extend this method to incorporate tempering. Therefore, we discuss two algorithms separately: the first one is based on \cite{Dassios2020} and applied to \eqref{decomp} for $q=0$ to sample $(A_{L_t}, \Gamma_t)$, while the second one is a simplified version of \cite[Algorithm 1]{Jorge2023b} and permits to include $q>0$ in \eqref{decomp}. We remark that adding a positive drift $b>0$ is possible in both cases with the techniques of \cite{Jorge2023b} (since it is just a boundary translation). For the sake of simplicity we do not deal with this parameter in the simplified Algorithms discussed in this section.

In the following, it is useful to decompose $\nu$ again but with the truncation level exactly $t\in (0,r]$, i.e.:
\begin{align}
    \nu(ds)&=\theta \frac{\alpha e^{-qs}}{\Gamma(1-\alpha)}s^{-\alpha-1}\1_{\{0<s\le t\}}ds+\big(\theta \frac{\alpha e^{-qs}}{\Gamma(1-\alpha)}s^{-\alpha-1}\1_{\{t<s\le r\}}ds+\zeta(ds)\big)\notag\\
    &\eqqcolon \nu^{\alpha,\theta,q}_{t}(s)ds+\widetilde\zeta_t(ds).\label{decomp2}
\end{align}

Denote by $\sigma^\zeta$ a compound Poisson subordinator with the L\'evy measure $\widetilde\zeta_t(ds)$ and by $\sigma^{\alpha,q}$ an (independent) truncated stable subordinator with the L\'evy density $\nu^{\alpha,\theta,q}_{t}(\cdot)$, so that $\sigma=\sigma^{\alpha,q}+\sigma^\zeta$ in law.
The algorithm to sample $(A_{L_t}, \Gamma_t)$, with $q =0$, is as follows. Assume first that $t\le r$. Sample an exponential r.v. $\mathcal{E}$ with mean $1/\Upsilon_t$, where
\begin{align*}
\Upsilon_t := \int_0^{+\infty} \widetilde\zeta_t (ds) \in [0, +\infty),
\end{align*}
where $\Upsilon_t=0$ means that $\sigma^\zeta(t)=0$ a.e., and thus the subordinator $\sigma$ considered here is just a truncated stable subordinator. This exponential represents the first jump of the compound Poisson component. We sample the inverse and the overshoot $(L_t^{\alpha,q}, \Gamma_t^{\alpha,q})$ of the truncated 
subordinator $\sigma^{\alpha,q}$ by using \cite[Algorithm 4.1]{Dassios2020} (since $q=0)$. If $\mathcal{E}>L^{\alpha,q}_t$, then the first passage time is given by the truncated stable subordinator $\sigma^{\alpha,q}$. Otherwise, i.e. if $\mathcal{E}\le L^{\alpha,q}_t$, we generate $\sigma^{\alpha,q
}$ conditionally to be less than $t$ (by \cite[Algorithm 3]{Jorge2023b}) and the jump of $\sigma^\zeta$ from the measure $\widetilde\zeta_t(\cdot) / \Upsilon_t$ to obtain the position of the process $\sigma$ at the jump time $\mathcal{E}$. If the process $\sigma$ did not cross the level $t$ at time $\mathcal{E}$, by using the strong Markov property, we may now repeat the procedure on a reduced level. 

Assume now that $t> r$. Create the grid $0=t_0<t_1<\dots<t_n=t$ such that $t_i=\frac{i}{n}t$, $i=0,\dots,n$, where $n=\lceil t/r\rceil$. This implies that $t_{i+1}-t_i\le r$ so to get $(L_t,\Gamma_t)$ we can apply Algorithm \ref{alg:3} where in its line 6 we use the first part of the algorithm described above.

This whole algorithm is given as Algorithm \ref{alg:simplif}. In it, we assume that it is possible to sample from $\zeta(\cdot)/\zeta(0,+\infty)$ with a constant complexity (e.g. by inversion of the distribution function), from which easily follows that we can also sample from $\widetilde \zeta_t(\cdot)/\Upsilon_t$ with a constant complexity.

        

\begin{algorithm}[ht]
    \caption{Generating $(A_{L_t}, \Gamma_t)$ for fixed $t>0$, under the decomposition \eqref{decomp} with $q=0$, and $A_{L_0}=0$, $\sigma_{L_0}=0$.}\label{alg:simplif}
	\KwData{Fix the parameters in \eqref{decomp}.}
    $x\gets 0$, $v\gets 0$, $\mathfrak{b} \gets t$ \;
    \eIf{$\mathfrak{b} \le r$}{
        Decompose $\nu$ as in \eqref{decomp2} (with a new truncation level $\mathfrak{b}$, i.e. into $\nu_\mathfrak{b}^{\alpha,\theta,0} +\widetilde \zeta_\mathfrak{b}$)  and generate an exponential r.v. $\mathcal{E}$ with mean $1/\Upsilon_\mathfrak{b}$\;
        Generate $(x^\prime, v^\prime) \sim (L_\mathfrak{b}^{\alpha,\theta,0}, D^{\alpha,\theta,0}_{\mathfrak{b}})$ from \cite[Algorithm~4.1]{Dassios2020}\;
    \eIf{$\mathcal{E} > x^\prime$}{
            $(x,v) \gets (x+x^\prime, v+v^\prime)$\;
            $\mathfrak{b}\gets t-v$\;
        }{
            Generate $W \sim \sigma_{\mathcal{E}}^{\alpha,\theta,0} \mid \sigma_{\mathcal{E}}^{\alpha,\theta,0} < \mathfrak{b}$ from \cite[Algorithm~3]{Jorge2023b}\;
            Generate a r.v. $\mathcal{J} \sim \widetilde\zeta_\mathfrak{b}(\cdot) / \Upsilon_\mathfrak{b}$\;
           $(x,v) \gets (x+\mathcal{E}, v+\mathcal{J}+W)$\;
           $\mathfrak{b} \gets t-v$\;
        }
        \eIf{$\mathfrak{b}\leq 0$}{$(A_{L_t},\Gamma_{t}) \gets (x,v-t)$\;} {Repeat from line 3}}
        {
        Create a grid $0=t_0<t_1<\dots<t_n=t$ with $t_i=\tfrac{i}{n}t$, $n=\lceil t/r\rceil$\;
        Get $(A_{L_t},\Gamma_t)$ by applying Algorithm~\ref{alg:3} on the grid $t_0<\dots<t_n$, while using lines 2--18 of this algorithm in line 6 of Algorithm~\ref{alg:3}\;
        }
   \end{algorithm}

In order to include $q>0$ and/or sample the undershooting, i.e., the vector $(A_{L_t}, \gamma_t)$, one can adapt the structure of Algorithm \ref{alg:simplif}, together with integrating the tools developed in \cite{Jorge2023b, Jorge2023a}. This yields a slightly simplified version of \cite[Algorithm 1]{Jorge2023b}, which we present as Algorithm \ref{alg:simplif_2}.

    
        

\begin{algorithm}[ht]
    \caption{Generating $(A_{L_t}, \Gamma_t, \gamma_t)$ for fixed $t>0$, under the decomposition \eqref{decomp}, and $A_{L_0}=0$, $\sigma_{L_0}=0$.}
    \label{alg:simplif_2}
    \KwData{Fix the parameters in \eqref{decomp}.}
    $(x,z,v)\gets (0,0,0)$, $\mathfrak{b} \gets \min\{t,r\}$ \;
    Generate an exponential r.v. $\mathcal{E}$ with mean $1/\Upsilon_r$\;
    \While{$\mathfrak{b}\ge 0$}{
        Generate $(x^\prime, z^\prime, v^\prime) \sim (L_\mathfrak{b}^{\alpha,\theta,q}, H^{\alpha,\theta,q}_{\mathfrak{b}}, D^{\alpha,\theta,q}_{\mathfrak{b}})$ from \cite[Algorithm 2]{Jorge2023b}\;
        \eIf{$\mathcal{E} > x^\prime$}{
        $(x,z,v)\gets (x+x^\prime,v+z^\prime,v+v^\prime)$\;
            $\mathcal{E}\gets \mathcal{E}-x^\prime$\;
            $\mathfrak{b} \gets \min\{t-v,r\}$ \;
        }{
            Generate $W \sim \sigma_{\mathcal{E}}^{\alpha,\theta,q} \mid \sigma_{\mathcal{E}}^{\alpha,\theta,q} < \mathfrak{b}$ from \cite[Algorithm~4]{Jorge2023b}\;
            Generate a r.v. $\mathcal{J} \sim \widetilde\zeta_r(\cdot) / \Upsilon_r$\;
            $(x,z,v)\gets (x+\mathcal{E}, v+W,v+\mathcal{J}+W)$\;
            $\mathfrak{b} \gets \min\{t-v,r\}$ \;
            Generate an exponential r.v. $\mathcal{E}$ with mean $1/\Upsilon_r$\;
        }
        
    }
    $(A_{L_t},\gamma_t,\Gamma_t)\gets (x,t-z,v-t)$\;
\end{algorithm}

\subsubsection{Computational complexity}

We remark that the two mentioned algorithms  {\cite[TSFFP-Alg]{Jorge2023a} and \cite[Algorithm 1]{Jorge2023b}} are fast, see the discussion in \cite[Section 2.2]{Jorge2023a}, {so by using them, our algorithms are fast in the same sense as well, since} we perform a finite number of iterations and elementary operations. {We will make this assertion more precise in the following text, where we will also show that our Algorithms \ref{alg:simplif} and \ref{alg:simplif_2} are fast as well.}

Computational complexity refers to the resources needed by an algorithm to complete its task and is usually expressed as a function of the inputs of the algorithm, see \citep{cormen}. Here, we focus on the computation time, also known as running time, which encompasses the number of instructions executed and data accesses performed during the algorithm's execution. Additionally, if an algorithm incorporates an element of randomness, its computational complexity becomes random as well,  so it is also meaningful to study the moments of the (random) running time. 

{To exemplify, the running time of \citep[Algorithm 1]{Jorge2023b} which samples $(L_t,H_t,D_t)$ has moments of all orders and the bound for its expected running time is given in \cite[Theorem 2.1]{Jorge2023b} in closed form. 

As it is commonly taken, we assume that that elementary operations (such as addition, multiplication, definition of objects, and evaluation of functions, etc.) have the same cost of one. 

We will also analyze our algorithms under the assumption that we use general (arbitrary) algorithms to sample from $(L_t,\gamma_t)$ and $(L_t,\Gamma_t)$. Here we assume that the expected running time of such algorithm used in Algorithm \ref{alg:2} to sample $(L_t,\gamma_t)$ is denoted by $\mathcal{K}^\gamma(t)$ and that it is bounded in bounded intervals as a function of $t$. In the same manner, we introduce the notation $\mathcal{K}^\Gamma(t)$. By considering e.g. $\sup_{h\in[0,t]} \mathcal{K}^\gamma(h)$, without a big loss of generality we assume that $t\mapsto\mathcal{K}^\gamma(t)$ and $t\mapsto\mathcal{K}^\Gamma(t)$ are also non-decreasing.
}

\begin{proposition}\label{complex_alg12}
    Let  $\mathcal{K}^\gamma(\cdot)$ and $\mathcal{K}^\Gamma(\cdot)$ be the expected running times of the algorithms called in Algorithm \ref{alg:2} (lines 5 and 11) and Algorithm \ref{alg:3} (line 6), respectively, under the assumptions above. Let $n\in\N$ be the number of time points $t_0<t_1<\cdots<t_n$ with the maximum step $h=\max\{t_k-t_{k-1}:k=1,\dots,n\}$. Assume that the inverse of the cumulative distribution functions of the random variables $\mathcal{S}_v$, $v>0$, are known explicitly. Then Algorithm \ref{alg:2} has finite expected running time bounded by $n(\mathcal{K}^{\gamma}(h)+7)$, while Algorithm  \ref{alg:3} has finite expected running time bounded by $n(\mathcal{K}^{\Gamma}(h)+4)$.
\end{proposition}
\begin{proof}
    Let us analyze Algorithm \ref{alg:3} first. In it, lines 1, 2, 3, 6, and 7 correspond to the worst case, since they contain the largest number of computations and also perform the call to {the algorithm with the expected running time at most $\mathcal{K}^{\Gamma}(h)$}. Lines 1, 2, and 7 have the cost of one {by the assumption}, and are executed $n$ times. Line 6 has {the expected running time at most $\mathcal{K}^{\Gamma}(h)$}, and is executed $n$ times. The claim for Algorithm \ref{alg:3} now easily follows.

    {The case of the Algorithm \ref{alg:2} follows similarly, where we note that its line 7 has the cost of two by the assumption that the inverse of the cumulative distribution function is known (so  first a uniform random variable is generated and then the inverse cumulative distribution is evaluated in it to get the sample of $\mathcal{S}_v$).}
\end{proof}

Now we want to show practicality of our algorithms by calculating their complexity in the cases when known algorithms for sampling $(L_t,D_t)$ and $(L_t,H_t)$, for fixed time $t>0$ and under $\sigma_0=0$, are used. {In Remark \ref{1644} we also compare and describe their actual computation times.} First we collect known complexities of algorithms from \cite{Dassios2020,Jorge2023a,Jorge2023b} which are called in our algorithms.

\begin{remark}[{Complexity of \cite[Algorithm 4.1]{Dassios2020}}]\label{rem:Dassios}
    In Algorithm \ref{alg:simplif} we use \cite[Algorithm 4.1]{Dassios2020} in line 4. Since \cite[Algorithm 4.1]{Dassios2020} conducts rejection sampling procedure together with operations of cost of one, its complexity has finite exponential moments. It is easily observed that the expected running time of \cite[Algorithm 4.1]{Dassios2020} is in essence bounded by the minimum value of the function
    \begin{align*}
        C(\lambda)=\frac{\alpha  \Gamma(2 - \alpha) A_0}{\Gamma(1-\alpha)(1 - \alpha)(A_0 - \lambda)^{2 - \alpha}}
e^{\Gamma(1-\alpha)^{-\frac{1}{\alpha}} \lambda^{1 - \frac{1}{\alpha}} \alpha (1 - \alpha)^{\frac{1}{\alpha} - 1}}
,\quad \lambda \in (0,A_0),
    \end{align*}
    where $A_0=(1-\alpha)\alpha^{\frac{\alpha}{1-\alpha}}$, up to some constant complexity due to elementary calculations. We call this expected complexity $\mathfrak{C}(\alpha)$, and note that the minimum can be easily numerically obtained. Also, we note that $\lim_{\alpha\to0}\mathfrak{C}(\alpha)=+\infty$  and $\lim_{\alpha\to1}\mathfrak{C}(\alpha)=1$. Hence, the algorithm is practically useful for not too small $\alpha$, e.g. $\mathfrak{C}(0.2)\approx 730$, $\mathfrak{C}(0.1)\approx 94500$, which we demonstrate later on, see Figure \ref{fig:speed comparison}. Furthermore, the complexity is independent of the barrier $t$ by the scaling properties of the stable subordinator. Note also that there is a small typing error in the pseudocode of \cite[Algorithm 4.1]{Dassios2020} where the function $C$ is missing some constants from its true definition in \cite[Eq. (4.10)]{Dassios2020}.
\end{remark}

\begin{remark}[{Complexity of \cite[Algorithm 2]{Jorge2023b}}]\label{rem:CLM}
    The complexity of \cite[Algorithm 2]{Jorge2023b} which we use in Algorithm \ref{alg:simplif_2} in line 4, is given by the complexity of \cite[TSFFP-Alg]{Jorge2023a} multiplied by the number of rejections in line 2 of  \cite[Algorithm 2]{Jorge2023b}. The rejection happens in the case when for a tempered stable subordinator $\sigma$, i.e. the one with the L\'evy density $\nu(s)=\alpha \Gamma(1-\alpha)^{-1}e^{-qs}s^{-1-\alpha}$, happens that $\{D_t-H_t>t\}$. It is known that
    \begin{align}\label{eq1818}
        \pr^{(x,0)}(D_t-H_t>t)=\overline \nu(t)u((0,t]), \quad t>0,
    \end{align}
    see \cite[Lemma 1.10]{bertoin1999}, where $u(t)=e^{-qt}t^{-1+\alpha}E_{\alpha,\alpha}\big( (qt)^\alpha\big)$ denotes the potential density of $\sigma$ (see \cite[Section 1.3]{bertoin1999}), and where $E_{\beta,\delta}\big( t\big)=\sum_{n=0}^\infty\frac{t^n}{\Gamma(\beta+\delta n)}$ is the two-parameter Mittag-Leffler function. 
    
    Thus, since \cite[TSFFP-Alg]{Jorge2023a} has finite exponential moments and the rejection follows an (independent) geometric distribution, \cite[Algorithm 2]{Jorge2023b} has finite moments of all orders with the expected running time bounded by
    \begin{align}\label{cross-big}
        \mathcal{K}_{CLM}(t)\coloneqq\kappa \left( 1 + \frac{q t}{\alpha} \right) 
\frac{(1 - \alpha)^{-3} + |\log \alpha| + \log N }\alpha \frac{1}{1-\sup_{\{0< h\le t\}}\overline \nu(h)u((0,h])},
    \end{align}
    where $N$ denotes the desired number of bits of precision, while the constant $\kappa$ is independent of other parameters. Here the first three terms come from the expected running time of \cite[TSFFP-Alg]{Jorge2023a} as given in \cite[Corollary 1]{Jorge2023a}. The last part comes by taking supremum in \eqref{eq1818} over $\{0<h\le t\}$, where we note that it is easy to check (by using Tauberian theorem) that $\overline \nu(t)u((0,t])\to 1/(\alpha \Gamma(\alpha)\Gamma(1-\alpha))<1$ as $t\to 0$, so the second term in \eqref{cross-big} is well defined, and thus making the whole expression in \eqref{cross-big} monotone in $t$.
\end{remark}

\begin{corollary}\label{cor:complexDassios}
    Let $\mathfrak{C}(\alpha)$ denote the expected running time of \cite[Algorithm 4.1]{Dassios2020} as in Remark \ref{rem:Dassios}. Then Algorithm \ref{alg:simplif} has finite moments of all orders with the expected running time of 
    \begin{align}\label{eq1251}
        \lceil t/r\rceil(\mathfrak{C}(\alpha)+9)\textrm{const}(\zeta),
    \end{align}
    while Algorithm \ref{alg:3} when 
    Algorithm \ref{alg:simplif} is used inside of it has finite moments of all orders with the expected running time bounded by 
    $$4n+\textrm{const}(\zeta)\big(\mathfrak{C}(\alpha)+9\big)\sum_{i=1}^n\left\lceil \frac{t_i-t_{i-1}}r\right\rceil,$$
    where in both expressions the term $\textrm{const}(\zeta)$ represents (the same) positive constant which depends only on the measure $\zeta$.
\end{corollary}
\begin{proof}
    To prove \eqref{eq1251} and finiteness of moments of all orders, assume first $t\le r$. We can repeat the reasoning of the proof of \cite[Theorem 2.1]{Jorge2023b}. Here, instead of the bound for the complexity of \cite[FPTS-Alg]{Jorge2023b}, we use the bound for \cite[Algorithm 4.1]{Dassios2020} (used in line 4) obtained in Remark \ref{rem:Dassios}. Line 9 has the expected running time of 5, see \cite[Proposition 3.1]{Jorge2023b}, and we have additional 4 operations of cost of one.  Thus, in the case $t\le r$, by mimicking the proof of \cite[Theorem 2.1]{Jorge2023b}, we get that the expected running time of Algorithm \ref{alg:simplif} is bounded by $(\mathfrak{C}(\alpha)+9)\textrm{const}(\zeta)$, where the constant comes as a bound related to the second parenthesis in \cite[Eq. (2.4)]{Jorge2023b}, see also \cite[Eq. (5.5)]{Jorge2023b}. If on the other hand $t> r$, we have to repeat the first mentioned scenario $\lceil t/r\rceil$ times, so we get \eqref{eq1251}.

    Now it is clear that Algorithm \ref{alg:3} has finite moments of all orders since Algorithm \ref{alg:simplif} has. Here at each iteration we have the expected running time of $\lceil (t_i-t_{i-1})/r\rceil(\mathfrak{C}(\alpha)+5)\textrm{const}(\zeta)$ so the claim follows.
\end{proof}

\begin{corollary}\label{cor:complexCLM}
Let $\mathcal{K}_{CLM}(\cdot)$ denote the expected running time of \cite[Algorithm 2]{Jorge2023b} as in Remark \ref{rem:CLM}. Then Algorithm \ref{alg:simplif_2} has finite moments of all orders, with the expected running time bounded by 
\begin{align}\label{eq1415}
    \lceil t/r\rceil(\mathcal{K}_{CLM}(r)+5e^{qr}+4)\textrm{const}(\zeta),
\end{align}
while Algorithm \ref{alg:2} when Algorithm \ref{alg:simplif_2} is used inside it has finite moments of all orders, with the expected running time bounded by 
    $$7n+\textrm{const}(\zeta)\big(\mathcal{K}_{CLM}(r)+5e^{qr}+4\big)\sum_{i=1}^n\left\lceil \frac{t_i-t_{i-1}}r\right\rceil.$$
\end{corollary}
\begin{proof}
The proof is similar to the proof of Corollary \ref{cor:complexDassios}. Again, we can follow the same reasoning of the proof of \cite[Theorem 2.1]{Jorge2023a}. Here, for the bound of the expected running time of \cite[Algorithm 2]{Jorge2023b} (used in line 4), we use the bound \eqref{cross-big}, while for the bound of the expected running time of \cite[Algorithm 2]{Jorge2023b} (used in line 10), we use \cite[Proposition 3.1 and Remark on p.9]{Jorge2023b}.
\end{proof}


\begin{example}
    As already noted in Remark \ref{rem:stable-gen}, our algorithms for sampling the trajectories allow us to deal with stable subordinators and their tempered versions. In addition to those, Section 4.2 of \citep{Jorge2023b} provides an illustrative example which falls into the class given by \eqref{decomp}. Another example can be found in \citep[p. 307]{Schilling2012}, where the L\'evy measure and the Laplace exponent of the subordinator are $\nu(t)=\sin{(\alpha\pi)}\Gamma(1-\alpha)e^{-at}t^{\alpha -2}(at+1-\alpha)/\pi$, $\alpha\in(0,1)$, $a>0$, and $\phi(\lambda)=\lambda/(\lambda+\alpha)^\alpha$, respectively. Note that by setting $\beta = 1-\alpha$ we have
    \begin{align}
    \nu(s) = \frac{1}{\beta} e^{-as}\nu_\beta (s) (as+\beta),\quad \nu_\beta (s) = \frac{\beta}{\Gamma(1-\beta)}s^{-\beta -1},
    \end{align} where we use the fact $\sin{(\pi \alpha)}\Gamma(\alpha)\Gamma(1-\alpha)=\pi$. Then it is clear that $\nu(s)/\nu_\beta (s) = 1+\mathcal{O}(s)$ as $s\downarrow 0$. 
    Thus, the assumptions of \cite[Algorithm-1]{Jorge2023b} are satisfied, see the discussion in Section \ref{sec:prelim}. In order to apply our algorithm, one should also have the distribution of the random variable $\mathcal{S}_v$. However, the tail of $\nu$ is known explicitly $\overline\nu(t)=\Gamma(1-\beta)^{-1}e^{-at}t^{-\beta}$.
\end{example}

\begin{remark}\label{1644}
    In Figure \ref{fig:speed comparison}, in the upper subfigure, we compare the real running times of Algorithms \ref{alg:simplif}, \ref{alg:simplif_2} and \cite[Algorithm 1]{Jorge2023b} (with its parameter $\rho=0.5$)
    on $\alpha$-stable subordinators, when $\alpha$ ranges from 0.1 to 0.9. For each $\alpha$, we make $M=100$ simulations of the first passage event through level $t=1$, where $\nu(s)$ is decomposed as
    \begin{align}
        \nu(s)=\frac{\alpha}{\Gamma(1-\alpha)}s^{-\alpha-1}\1_{\{s\le 1\}}+\frac{\alpha}{\Gamma(1-\alpha)}s^{-\alpha-1}\1_{\{s> 1\}},
    \end{align}
    and we measure the real running time (in seconds) of such simulation for each of the mentioned algorithms.  We repeat this sampling $N=1000$ times to obtain the data which we plot as boxplots in the upper subfigure of Figure \ref{fig:speed comparison}. We note that for $\alpha=0.1$, Algorithm \ref{alg:simplif} does not perform well as already said in Remark \ref{rem:Dassios} and this boxplot is outside the figure since it is centered around the value of $8$ seconds. However, for bigger $\alpha$, Algorithm \ref{alg:simplif} performs best. 

    In the lower subfigure, we compare the real running times of Algorithm \ref{alg:simplif_2} and \cite[Algorithm 1]{Jorge2023b} (with its parameter $\rho=0.5$).  We evaluate the running times on subordinators with the L\'evy measure decomposed as
    \begin{align}
        \nu(s)=\frac{0.65}{\Gamma(1-0.65)}e^{-qs}s^{-1.65}\1_{\{s\le 1\}}+s^{-5}\1_{\{s> 1\}},
    \end{align}
    where $q$ is ranging from $1$ to $7.75$. For each $q$, we make $M=10$ simulations of the first passage event through level $t=1$, measure the real running time (in seconds), and repeat such sampling $N=100$ times to obtain the data which we plot as boxplots in the lower subfigure of Figure \ref{fig:speed comparison}.

   {We observe that \cite[Algorithm 1]{Jorge2023b} and Algorithm \ref{alg:simplif} have comparable real running times; indeed the second is just a simplified version of the first where the free parameter $\rho$ (which slightly optimizes the running time of \cite[Algorithm 1]{Jorge2023b}) is avoided.} 

    The sampling was seeded, and conducted on Microsoft Surface Pro 8 with Intel i7-1185G7 CPU and 16 GB of RAM (no GPU was used). The code is available in \cite{Python}.
\begin{figure}[ht]
    \centering
    \begin{subfigure}{.85\textwidth}
        \centering
    \includegraphics[width=1\linewidth]{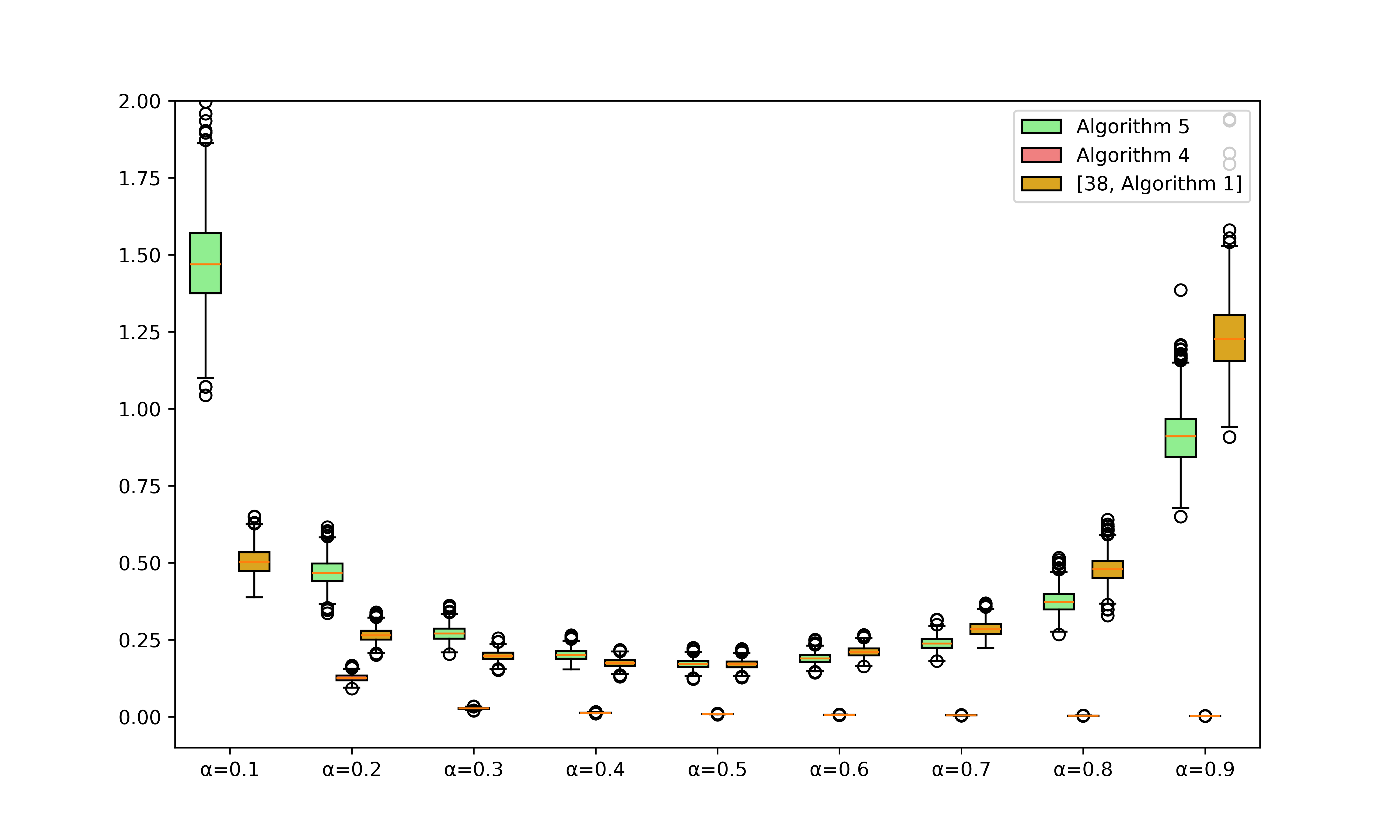}
    \end{subfigure}
    \\
    \begin{subfigure}{.85\textwidth}
        \centering
    \includegraphics[width=1\linewidth]{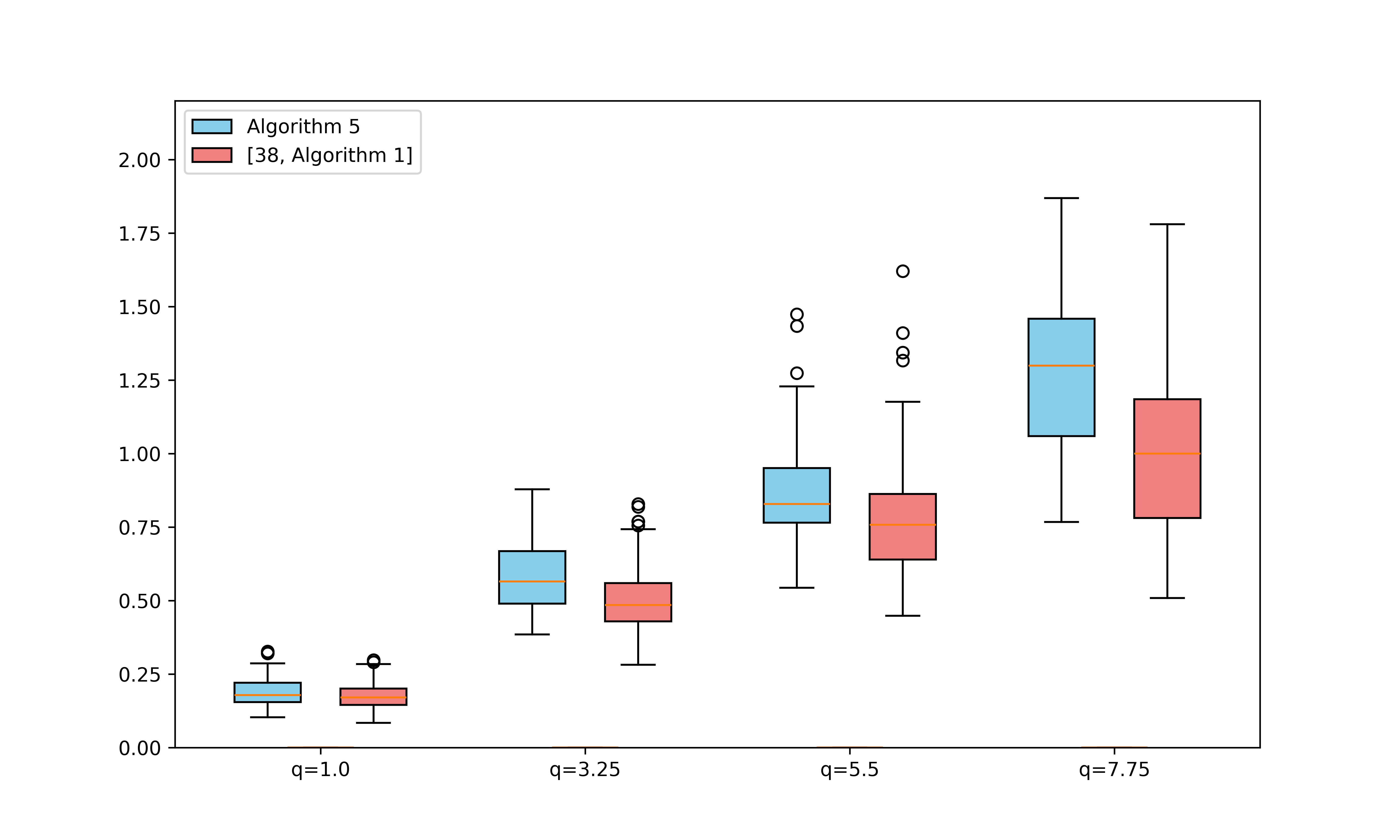}
    \end{subfigure}
    \caption{See Remark \ref{1644} for details of these boxplot comparisons.}
\label{fig:speed comparison}
\end{figure}
\end{remark}

{To conclude this section, for illustrative purposes, we examine the $\alpha$-stable subordinator and its tempered version and provide Figures \ref{Fig1} and \ref{FigAlg4} of sampled trajectories obtained using Algorithms \ref{alg:2}, \ref{alg:3} and \ref{alg:triplet}. We also attach the GitHub repository \cite{Python} with the corresponding Python code used to sample figures for the reader's convenience.}

\begin{figure}[ht]
    \centering
    \begin{subfigure}{.85\textwidth}
        \centering
        \includegraphics[width=.975\linewidth]{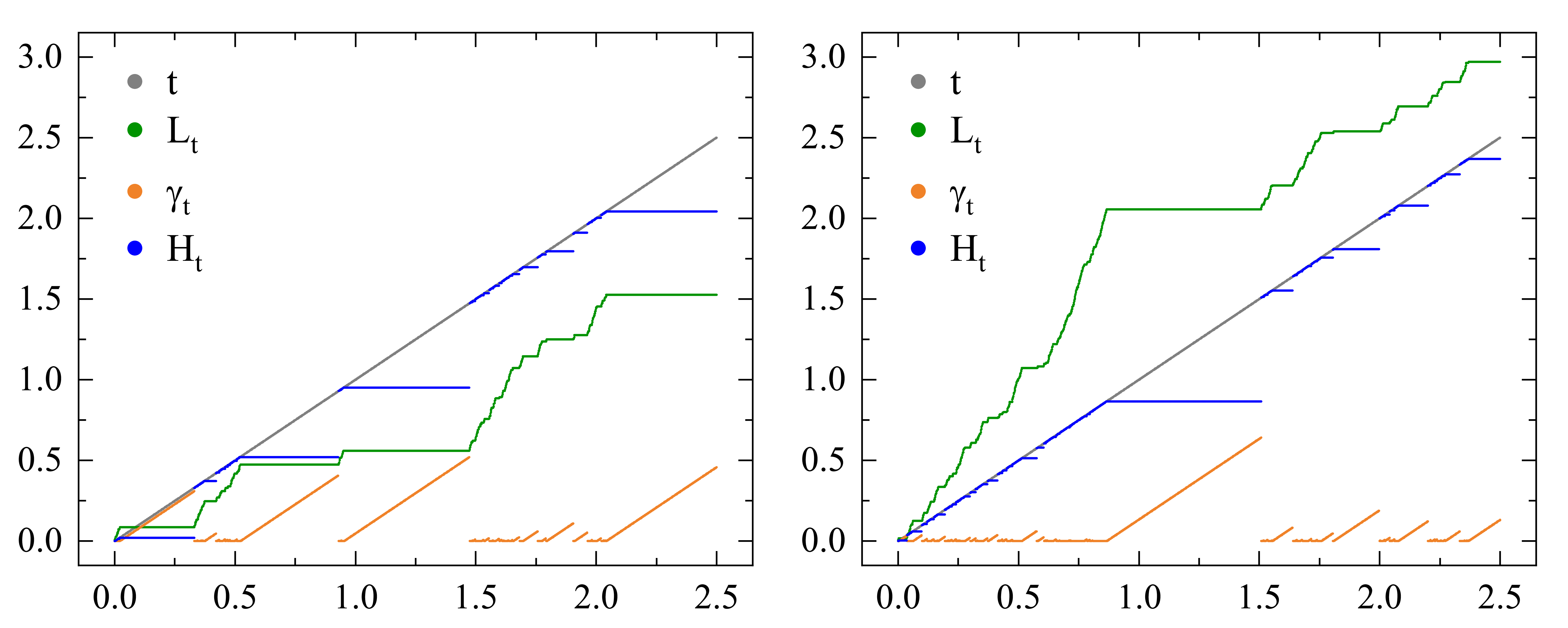}
        \caption{Sample of the $\alpha$-stable inverse subordinator, age process, and undershoot process through Algorithm \ref{alg:2}, left figure, while the right one stands for the tempered version.}
    \label{Top}
    \end{subfigure}
    \\
    \begin{subfigure}{.85\textwidth}
        \centering
        \includegraphics[width=.975\linewidth]{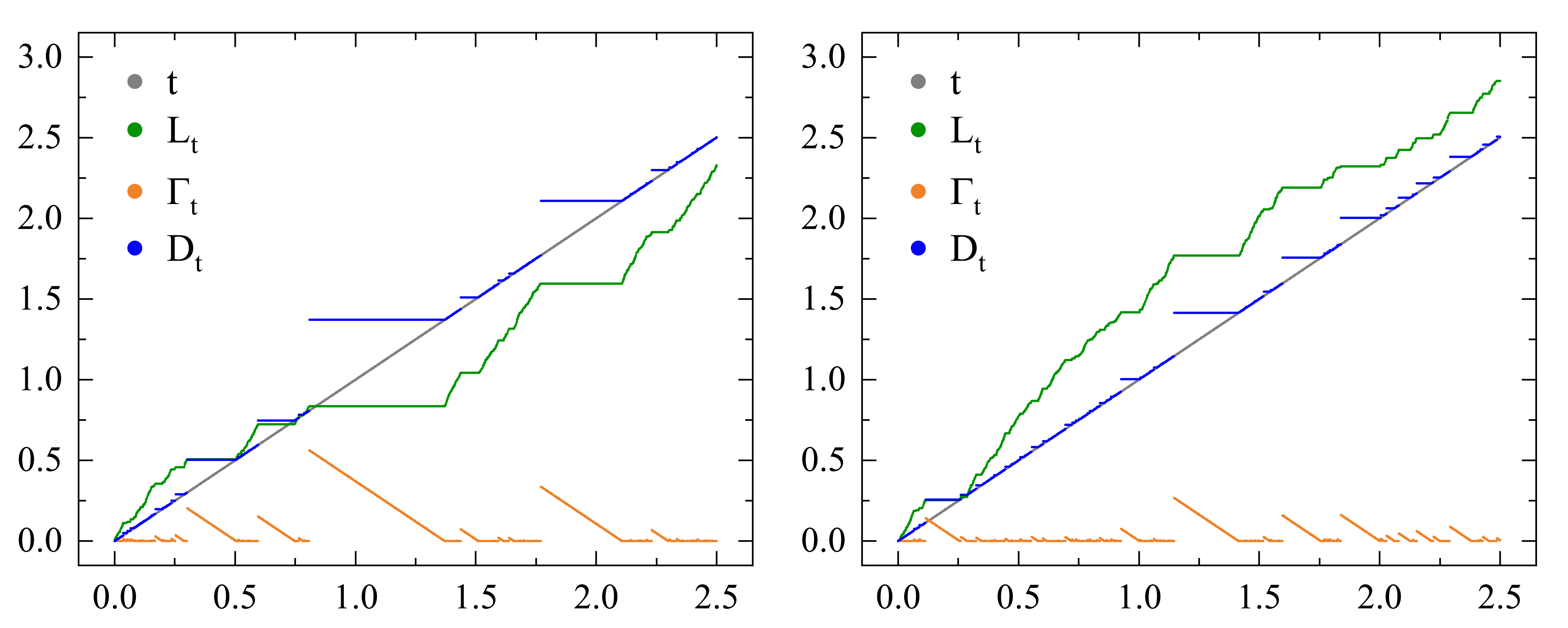}
        \caption{Sample of the $\alpha$-stable inverse subordinator, remaining lifetime process, and overshoot process through Algorithm \ref{alg:3}, left figure, while the right one stands for the tempered version.}
    \label{Bottom}
    \end{subfigure}

    \caption{Paths for the $\alpha$-stable inverse subordinator and its tempered version in a time interval from $0$ to $2.5$ with $10^3$ equidistant steps with $\alpha =0.75$, $q=1$, and $x=v=r=0$. The green, orange, and blue colors stand for the inverse subordinators, age process (remaining lifetime), and  undershooting (overshooting) process, respectively. Time is displayed in black.}
\label{Fig1}
\end{figure}

\begin{figure}[ht]
     \centering
     \includegraphics[width=0.95\linewidth]{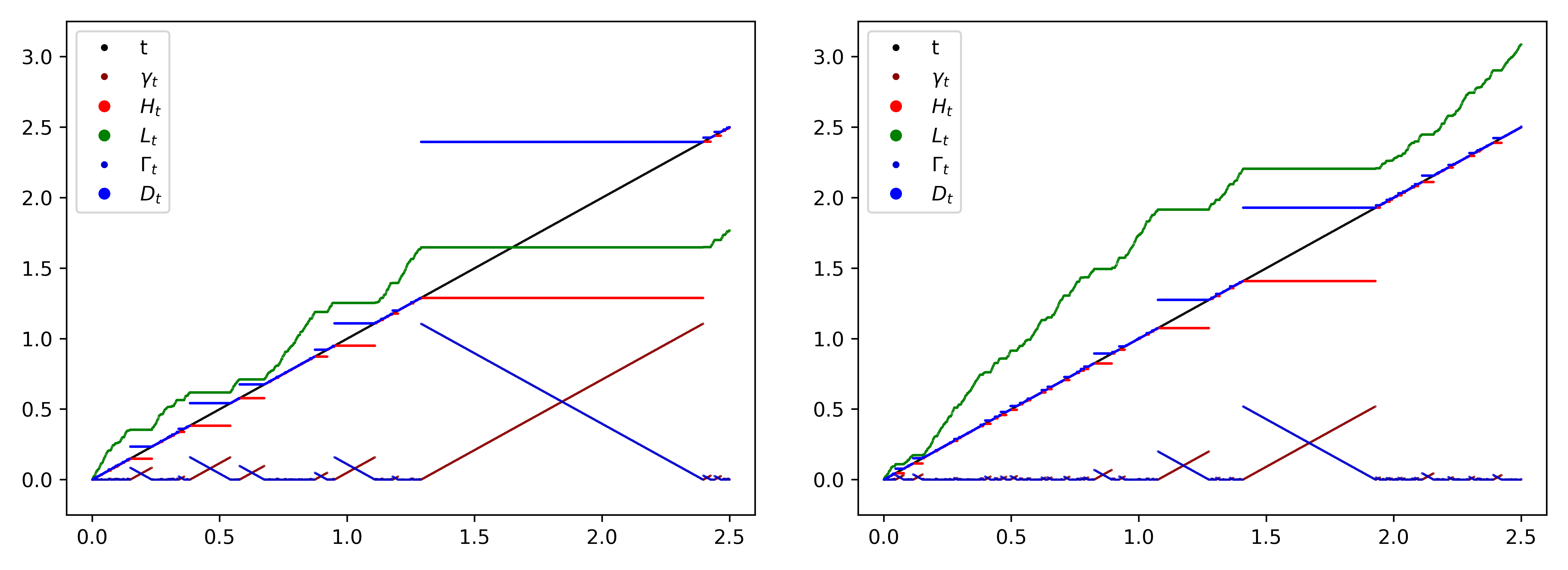}
     \caption{Sample of the $\alpha$-stable inverse subordinator, the remaining lifetime, the overshoot, the age and the undershoot processes through Algorithm \ref{alg:triplet}, left figure, while the right one stands for the tempered version; the parameters are $\alpha=0.75$ and $q=1$.}
     \label{FigAlg4}
 \end{figure}

\section{Exact simulation of time-changed processes and Monte Carlo method}\label{secmonte}

The ability to make simulations of the vectors \eqref{vectors} and \eqref{vectors2} yields the possibility of exact sampling of the trajectories of suitable time-changed processes.
Namely, consider a Feller process $M_t$, $t\geq0$, on $\R^d$. As long as it can be sampled exactly, its composition with an independent inverse subordinator, its undershoot, or its overshoot (sampled as shown in the previous section) can be also sampled exactly.
{In this section, the goal is to study such time-changed Feller processes, where the highlight is Theorem \ref{thm1541} - the central limit theorem for its functionals. We show the significance of this theorem by, first, giving the connection between time-changed Feller processes and non-local-in-time parabolic equations (see Subsection \ref{sec:MCA}), and second, its applicability to a broad class of L\'evy processes.}

Here is the construction of the process we are going to sample. 
Consider the canonical construction of the process $(M, \sigma) \, = \, ((M_t, \sigma_t),\, t\geq 0)$, where $(M_t,\, t\geq0)$ is a Feller process and $\sigma = (\sigma_t,\, t\geq0)$ is an independent strictly increasing c\`adl\`ag process with stationary and independent increments. Hence, we have the (family of) filtered probability spaces $\l \Omega, \mathcal{F}_\infty, (\mathcal{F}_t)_{t\geq0}, \pr^{(x,v)} \r$ where $\mathcal{F}_t$ is the natural filtration, $\mathcal{F}_\infty = \bigvee_t \mathcal{F}_t$, and $\pr^{(x,v)}$ is the unique measure such that $\pr^{(x,v)} (M_0=x, \sigma_0=v)=1$.  In particular, we have that $\sigma^0= (\sigma^0_t,\, t\geq0)$ defined as $\sigma^0_t=\sigma_t-\sigma_0$ is a subordinator under any $\pr^{(x,v)}$ while $\sigma =(\sigma_t,\, t\geq0)$ is a subordinator under $\pr^{(x,0)}$. Define now the first-passage processes
\begin{align*}
    L_t\coloneqq\inf\{u\geq0:\, \sigma_u>t\}.
\end{align*}
Recall that since $\sigma$ is assumed to be strictly increasing then the inverse is, a.s., continuous. We define
\begin{align*}
    H_t\coloneqq\sigma^0_{L_t-}, \qquad D_t\coloneqq\sigma^0_{L_t}.
\end{align*}
In particular, under $\pr^{(x,0)}$, $\sigma_t$ is a subordinator, $H_t$ is the undershooting and $D_t$ the overshooting. 
Following \cite[Section 4]{ascione2024} it is possible to prove that the processes
\begin{align}
    & (M, \sigma) \, = \, ( (M_t, \sigma_t),\, t\geq0 ) \label{42},\\
    &  (M_{\sigma^0}, \sigma) \, = \, (  (M_{\sigma^0_t}, \sigma_t),\, t\geq 0 ) \label{43}
\end{align}
are Markov additive processes in the sense of Cinlar \cite{cinlarma} on the probability spaces above. Therefore, it is noteworthy that whenever the Feller process \eqref{42} is a jump diffusion it can be inserted in the theory developed in \cite[Section 4]{Meerschaert2014}, i.e. the left-continuous process $(M_{L_t}, \gamma_t)$ has the simple Markov property, while $(M_{L_t}, \Gamma_t)$ is a Hunt process. Equivalently, if the process \eqref{43} is a jump diffusion, then the left-continuous process $(M_{H_t}, \gamma_t)$ has the simple Markov property while $(M_{D_t}, \Gamma_t)$ is a Hunt process.

To summarize, by using the results of the previous section we can make exact sampling of the vectors
\begin{align}
    & (M_{L_{t_1}}, \cdots, M_{L_{t_n}}), \label{vectorstc1} \\
    & (M_{H_{t_1}}, \cdots, M_{H_{t_n}}), \label{vectorstc2}\\
    & (M_{D_{t_1}}, \cdots, M_{D_{t_n}}), \label{vectorstc3}
\end{align}
under $\pr^{(x,0)}$, for any $n\in\N$, $0 < t_1 < \cdots < t_n$. Here $\sigma_t$ is a subordinator with the Laplace exponent \eqref{bernstinsimul}, $L_t$ is its inverse process, $H_t$ is the undershooting of the subordinator, while $D_t$ is the overshooting of the subordinator $\sigma_t$. This is possible whenever an exact algorithm to make simulations of the Feller process $M_t$ is available, as shown in Algorithm \ref{alg:4}.  We use { $T=(T_t,\, t\geq0)$} for the inverse subordinator $L_t$ or the undershoot process $H_t$, or the overshoot process $D_t$, of a given subordinator $\sigma$, where no distinction between the three is needed. 

\begin{algorithm}[H]
    \caption{Generating vectors \eqref{vectorstc1}, \eqref{vectorstc2} and \eqref{vectorstc3} under $\pr^{(x,0)}$}\label{alg:4}
    \KwData{$n\in\N$, $t_1 < t_2 < \cdots <t_n$, $x \in \mathbb{R}^d$}
    Generate $(T_{t_1}, \cdots ,T_{t_n})$ from Algorithm \ref{alg:2} or \ref{alg:3}\\
    $(s_1, \cdots, s_n) \gets (T_{t_1}, \cdots, T_{t_n})$\\
    Generate $(M_{s_1}, \cdots, M_{s_n})$\\
    $(M_{T_{t_1}}, \cdots, M_{T_{t_n}}) \gets (M_{s_1}, \cdots, M_{s_n})$\\
\end{algorithm}

Examples of Feller processes whose trajectories can be sampled exactly include for example the following processes. Diffusions for which an analytical solution is known constitute one example, see e.g.  \citep[Section 4.4]{kloeden1992} which offers many such examples. In the context of diffusion processes, we highlight the class of one-dimensional diffusions that under specific conditions of the drift and the diffusion coefficients can be transformed into the Wiener process, as shown by Ricciardi's research \cite[see Theorem 1]{Ricciardi1976}.  Similarly,  \citep[Section 4.3]{kloeden1992} provides an appropriate transformation such that a certain one-dimensional non-linear stochastic differential {equation} is reduced to a linear one, for which an analytical solution can be obtained. In higher dimensions this kind of transformation is not straightforward, but in \citep[Section 4.8]{ kloeden1992} there are some cases in which this is possible. Additionally, the authors in \citep{Casella2011} provide an algorithm for the exact simulation of jump diffusion processes {which belong to a class of solutions to the following SDE
\begin{align}
    dV_t = \varrho (V_{t-}) dt + \Sigma(V_{t-}) dW_t + \int_E g(z, V_{t-}) m(dz, dt), \quad V_0 = v_0,
\end{align}
where $\varrho$, $\Sigma$ and $g$ are coefficients satisfying the classical Lipschitz conditions,  and $m(dz, dt)$ is a random counting measure on the product space $E \times [0, T]$  with $E\subset \R$ and $T>0$, see \cite[Section 2]{Casella2011} for details.} First passage times for diffusion processes can be sampled exactly in some situations, see e.g. \cite{Sacerdote2001,Zucca2019}.

\begin{remark}\label{complex_alg3}
    The complexity {of Algorithm \ref{alg:4}} above depends both on the complexity of sampling the Feller process  {$M$ (denoted by, say, $\mathfrak{M}$ and depending on parameters of $M$ and possibly on the time points $s_1,\dots,s_n$ as well)} and the complexity of sampling the time-change. In this generality, the value of $\mathfrak{M}$ is not clear, but for space homogeneous Feller processes with well-behaved densities, it is clear that expected time of generating $(M_{s_1}, \cdots, M_{s_n})$ is bounded by $\mathcal{C}\cdot n$, {for the constant $\mathcal{C}$ independent of times $s_1,\dots,s_n$. Hence,} the total expected running time of sampling time-changed Feller processes by Algorithm \ref{alg:4}, in this case, is {equal to $n(\mathcal{K}^{\gamma}(h)+7+\mathcal{C})+2$ by using Algorithm \ref{alg:2}, or equal to $n(\mathcal{K}^{\Gamma}(h)+4+\mathcal{C})+2$ by using Algorithm \ref{alg:3}, where we recall that $\mathcal{K}^{\gamma}$ and $\mathcal{K}^{\Gamma}$ denote the expected running times of the algorithms used inside of Algorithm \ref{alg:2} (lines 4 and 11) and Algorithm \ref{alg:3} (line 6), respectively.}

    {For example, in the context of diffusion processes, with time-space deterministic transformations as provided in \cite{Ricciardi1976}, one can determine $\mathcal{C}$. Indeed, one can sample exactly the vector in line 3 of Algorithm \ref{alg:4} just by sampling Brownian motion. This means that the complexity of such sampling is given by the complexity of sampling Brownian motion.}
\end{remark}

For illustrative purposes, we look at the Brownian motion and time-change it with the tempered $\alpha$-stable inverse subordinator and its undershooting and overshooting processes; see Figure \ref{Fig2}.

 \begin{figure}[ht]
     \centering
     \includegraphics[width=0.95\linewidth]{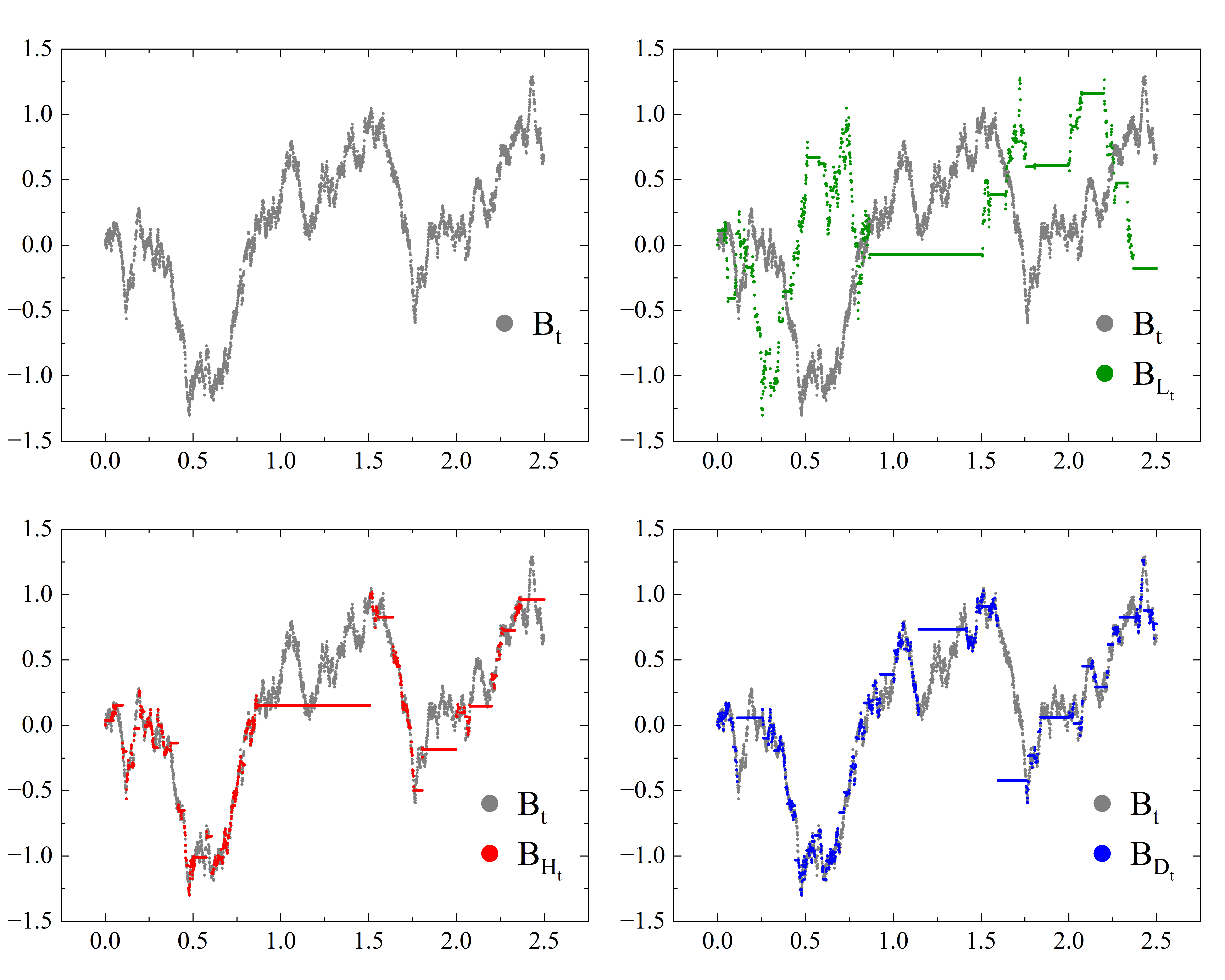}
     \caption{Time-changed Brownian motion. The Brownian motion $B_t$ shown on the top-left is time-changed with the tempered $\alpha$-stable inverse subordinator of the previous figure, i.e. Figure \ref{Fig1} (A) right-hand side, and shown in the top-right. The bottom figures show the same Brownian motion time-changed with the undershooting and overshooting processes (left and right side, respectively) of Figure \ref{Fig1} (A) and (B) on the right-hand side, respectively.}
     \label{Fig2}
 \end{figure}

\subsection{Monte Carlo approximations}\label{sec:MCA}
In this section, which draws inspiration from the recent research \citep{kolokoltsov2021}, we want to pose our attention to the Monte Carlo approximation of functionals of the process $(M_{T_t}, t\geq0)$,
\begin{align}\label{eq4.1}
	q(t_1, \cdots, t_n,\, x)\coloneqq \E^{(x,0)}[u( M_{T_{t_1}}, M_{T_{t_2}}, \dots, M_{T_{t_{n}}} )],
\end{align}
where $0\le t_1<t_2<\dots<t_n$, $u:\R^{d\times n}\to\R$,  $M=(M_t,\, t\geq0)$ is a $d$-dimensional Feller process, and, as before, {$T=(T_t,\, t\geq0)$} stands for the inverse subordinator $L_t$ or the undershoot process $H_t$, or the overshoot process $D_t$, of a given subordinator $\sigma$, where no distinction between the three is needed. 

It is noteworthy that our study builds upon and generalizes the findings of previous research in this field in the following aspects. Any inverse subordinator and undershooting processes are considered, as well as any overshooting process that satisfies \eqref{1610}.  We consider any $d$-dimensional Feller process instead of $d$-dimensional isotropic stable process. The availability of exact sample paths of inverse, undershoot, and overshoot processes allows us to consider general functionals of the time-changed process instead of only the single time position.

Note that the distribution of $T_t$ under $\pr^{(x,0)}$ does not depend on $x$ and so for this distribution we use the symbol 
\begin{align}
    \mu_t(dw)\coloneqq \pr^{(x,0)} (T_t\in dw),
\end{align}
whenever there is no need to distinguish between the inverse subordinator, undershooting, or overshooting.

In particular, if $T_t$ is the inverse subordinator, then the term in \eqref{eq4.1} for $n=1$ gives the stochastic representation of the unique solution (in a suitable sense) to the general Cauchy problem
\begin{equation}
    \begin{aligned}
        \phi(\partial_t - G) q(t,x) &= 0, && x \in \R^d, \, t > 0, \\
        q(0,x) &= u(x), && x \in \R^d.
    \end{aligned}
\end{equation} 
Here $G$ is the infinitesimal generator of the Feller process $M_t$, and the operator $\phi(\partial_t)$ is a non-local in time operator which can be defined in different ways. Still, the most popular is as a fractional-type derivative (see e.g. \cite{chen, kochubei}):
\begin{align}\label{eq4.4}
    \phi(\partial_t) f(t,\cdot) \coloneqq b\partial_t f(t,\cdot) + \partial_t \int_0^t (f(s,\cdot) - f(0,\cdot)) \overline{\nu}(t-s)ds,
\end{align}
{where $b$ and $\overline \nu $ come from \eqref{bernstein-eq0}.}
However, different approaches are also possible, see e.g. \cite{kolokoltsov2015}.

If instead $T_t$ is the undershoot $H_t$, then \eqref{eq4.1} for $n=1$ is the stochastic representation of the unique solution to
\begin{equation}
    \begin{aligned}
        \phi(\partial_t - G) q(t,x) &= \overline{\nu}(t)q(0,x), \quad &&x\in\R^d,\,t>0,\\
        q(0,x) &= u(x), \quad &&x\in\R^d,
    \end{aligned}
\end{equation}
where the non-local operator $\phi(\partial_t - G)$ is non-local in time and space and given by
\begin{align*}
    -\phi(\partial_t - G) q(t,x) \coloneqq \int_0^\infty ( P_s q(t-s,x)\1_{[0,t]}(s) - q(t,x) )\nu(ds).
\end{align*}
Here $P_s$, $s\geq0$, is the semigroup generated by $G$, see \cite{ascione2024}.

Lastly, if $T_t$ is the overshoot $D_t$, then \eqref{eq4.1} for $n=1$ is the stochastic representation of the unique solution to
\begin{equation}
    \begin{aligned}
        \phi(\partial_t - G) q(t,x) &= \int_t^{+\infty}P_s\big(q(0,\cdot)\big)(x)\nu(ds), \quad &&x\in\R^d,\,t>0,\\
        q(0,x) &= u(x), \quad &&x\in\R^d,
    \end{aligned}
\end{equation}
see \cite{harmonic}.

The Monte Carlo approximation of \eqref{eq4.1} that we consider here is the random variable
\begin{align}
    q_N(t_1, \cdots, t_n)\coloneqq\frac{1}{N}\sum_{k=1}^{N} u\Big( M_{T_{t_1}^{k}}^k,\dots, M_{T_{t_n}^{k}}^k\Big),
 \label{monteest}
\end{align}
where $M_t^k$ and $T_t^{k}$ are independent and identically distributed samples of the processes $M_t$ and $T_t$, respectively.
It is clear that by the strong law of large numbers $\pr^{(x,0)}$-a.s. $q_N (t_1, \cdots, t_n) \to q(t_1, \cdots, t_n, x) $ as $N\to+\infty$, if $q(t_1,\dots,t_n,x)$ is well-defined.

For this estimator we study the central limit theorem and Berry-Esseen theorem. First, we provide general conditions for which these theorems can be obtained and after that we show that these conditions are satisfied, for example, by {all isotropic L\'evy processes}. In our theorems we will control the behaviour of $u: \R^{d\times n} \mapsto \R$ at the infinity. In particular, we will adopt the notation
\begin{align}
    u (x) \, \stackrel{\infty}{=} \,  O \l g(x) \r  \quad \overset{\text{def}}{\iff}\quad  \limsup_{|x| \to +\infty }\left|\frac{u(x)}{g(x)} \right| < +\infty.
\end{align}

\begin{theorem}\label{thm1541}
    Let {$u: \R^{d\times n} \mapsto \R$} be locally bounded and suppose that there exists $p\geq0$ such that $u{(x_1,\dots,x_n)} \stackrel{\infty}{=} O(\max_{1\leq i\leq n} |x_i|^p)$. If  $p>0$, suppose additionally that
    \begin{align}
        \int_0^{+\infty} \int_1^{+\infty}  s^{2p-1} \pr^{(x,0)} (|M_w| > s) \,  ds \, \mu_{t}(dw) < +\infty,\quad {t\in\{t_1,\dots,t_n\}},
    \label{tailfeller}
    \end{align}
    for some $x\in\R^d$. For all such $x$ as in \eqref{tailfeller},  let 
    \begin{align}
        S_N \coloneqq \frac{\sqrt{N} (q_{N}(t_1, \cdots, t_n)-q(t_1, \cdots, t_n,x))}{\sigma(t_1, \cdots, t_n , x)},
    \end{align}
    where $\sigma(t_1, \cdots, t_n,x)^{2}=\E^{(x,0)} [u(M_{T_{t_1}}, \cdots, M_{T_{t_n}})-q(t_1, \cdots, t_n,x)]^2$. Then, for $\psi\in C_b(\R^d)$ it holds that
    \begin{equation*}
        \E^{(x,0)}[\psi(S_N)]\to \int_\R \psi(w) \frac{e^{-\frac{w^2}{2} }}{\sqrt{2\pi}} \, dw, \quad\text{as $N\to+\infty$}.
    \end{equation*}
    If, furthermore, \eqref{tailfeller} is replaced by the condition
    \begin{align}
        \int_0^{+\infty} \int_1^{+\infty} s^{3p-1} \pr^{(x,0)} (|M_w| > s) ds \, \mu_t(dw) < +\infty,\quad {t\in\{t_1,\dots,t_n\}},
    \label{tailfeller3}
    \end{align}
    for some $x\in\R^d$, then for all such $x\in\R^d$ and all $\psi\in C^3_b(\R^d)$, it holds that
    \begin{align}
        \Big| \E^{(x,0)} \psi(S_N) - \int_R \psi(w) \frac{e^{-\frac{w^2}{2}}}{\sqrt{2\pi}} dw \Big| \leq  \frac{1}{6}\Big(1+2\sqrt{\frac{2}{\pi}}\Big)\|\psi^{'''}\|_{\infty}\frac{\rho(t_1, \cdots, t_n,x)}{\sqrt{N}\sigma(t_1, \cdots, t_n,x)^3},
    \end{align}
    where $\rho (t_1, \cdots, t_n,x) = \E^{(x,0)} | u(M_{t_1}, \cdots, M_{T_{t_n}}) - q(t_1, \cdots, t_n,x) |^3 $.
\end{theorem}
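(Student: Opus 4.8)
The estimator \eqref{monteest} is a sample mean of i.i.d.\ random variables: setting $Z_k\coloneqq u\l M^{k}_{T^k_{t_1}},\dots,M^k_{T^k_{t_n}}\r$, the $Z_k$ are i.i.d.\ copies, under $\mathds{P}^{(x,0)}$, of $Z\coloneqq u\l M_{T_{t_1}},\dots,M_{T_{t_n}}\r$, and $S_N=\l \sigma\sqrt N\r^{-1}\sum_{k=1}^N(Z_k-q)$ with $q=q(t_1,\dots,t_n,x)=\mathds{E}^{(x,0)}Z$ and $\sigma=\sigma(t_1,\dots,t_n,x)$. Hence the whole statement follows from the classical Lindeberg--L\'evy CLT and from the smooth-test-function version of the Berry--Esseen estimate for sums of i.i.d.\ variables, provided $Z$ has the required moments; the plan is thus to (i) deduce $\mathds{E}^{(x,0)}|Z|^2<\infty$ from \eqref{tailfeller} and $\mathds{E}^{(x,0)}|Z|^3<\infty$ from \eqref{tailfeller3}, and then (ii) run these two classical arguments. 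We may assume $\sigma>0$, otherwise $Z=q$ a.s.\ and both claims are trivial.

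For step (i), local boundedness of $u$ together with $u\stackrel{\infty}{=}O\l \max_i|x_i|^p\r$ gives a constant $C$ with $|u(y_1,\dots,y_n)|\le C\l 1+\sum_{i=1}^n|y_i|^p\r$, so that for $\beta\in\{2,3\}$ convexity yields $\mathds{E}^{(x,0)}|Z|^\beta\le C_\beta\l 1+\sum_{i=1}^n\mathds{E}^{(x,0)}|M_{T_{t_i}}|^{p\beta}\r$. Now $T_{t_i}$ is a functional of $\sigma$, hence independent of $M$, with law $\mu_{t_i}$; conditioning on $T_{t_i}$ and using the layer-cake formula together with Tonelli,
\begin{align*}
\mathds{E}^{(x,0)}|M_{T_{t_i}}|^{p\beta}\,=\,p\beta\int_0^{+\infty}\int_0^{+\infty}s^{p\beta-1}\,\mathds{P}^{(x,0)}\l |M_w|>s\r\,ds\,\mu_{t_i}(dw).
\end{align*}
Splitting the inner integral at $s=1$, the part over $(0,1)$ is bounded by $p\beta\int_0^1 s^{p\beta-1}ds=1$, while the part over $(1,+\infty)$ is finite precisely by \eqref{tailfeller} when $\beta=2$ (so $p\beta=2p$) and by \eqref{tailfeller3} when $\beta=3$ (so $p\beta=3p$), applied with $t=t_i$. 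In particular $\mathds{E}^{(x,0)}|Z|<\infty$, so $q(t_1,\dots,t_n,x)$ is well defined, $\sigma^2=\mathds{E}^{(x,0)}(Z-q)^2<\infty$, and $\rho=\mathds{E}^{(x,0)}|Z-q|^3<\infty$. When $p=0$, $u$ is bounded and no extra hypothesis is needed.

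For step (ii), the CLT statement is the Lindeberg--L\'evy theorem applied to the centered i.i.d.\ summands $Z_k-q$ of finite variance $\sigma^2$, together with the fact that weak convergence to $N(0,1)$ is equivalent to convergence of $\mathds{E}\psi(S_N)$ for every $\psi\in C_b$. For the Berry--Esseen bound, introduce i.i.d.\ $N(0,\sigma^2)$ variables $Y_1,\dots,Y_N$ independent of the $Z_k$ and telescope along the hybrid sums: with $W_k=\l \sigma\sqrt N\r^{-1}\l \sum_{j<k}(Z_j-q)+\sum_{j>k}Y_j\r$, which is independent of both $Z_k$ and $Y_k$, a third-order Taylor expansion of $\psi$ around $W_k$ — where the zeroth-, first- and second-order terms cancel in expectation because $\mathds{E}(Z_k-q)=\mathds{E}Y_k=0$ and $\mathds{E}(Z_k-q)^2=\mathds{E}Y_k^2=\sigma^2$ — gives
\begin{align*}
\ABS{\mathds{E}^{(x,0)}\psi(S_N)-\int_{\mathbb{R}}\psi(w)\frac{e^{-w^2/2}}{\sqrt{2\pi}}\,dw}\,\le\,\frac{\|\psi'''\|_\infty}{6\,\sigma^3 N^{3/2}}\sum_{k=1}^N\l \mathds{E}^{(x,0)}|Z_k-q|^3+\mathds{E}|Y_k|^3\r.
\end{align*}
Since $\mathds{E}|Y_k|^3=\sigma^3\,\mathds{E}|N(0,1)|^3=2\sqrt{2/\pi}\,\sigma^3$ and, by the Lyapunov inequality, $\sigma^3=\l \mathds{E}(Z-q)^2\r^{3/2}\le\mathds{E}|Z-q|^3=\rho$, the bracket is at most $\l 1+2\sqrt{2/\pi}\r\rho$, and the right-hand side is at most $\frac16\l 1+2\sqrt{2/\pi}\r\|\psi'''\|_\infty\,\rho/(\sqrt N\,\sigma^3)$, as claimed.

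The only genuinely non-mechanical step is (i), namely converting the integrated tail hypotheses \eqref{tailfeller}--\eqref{tailfeller3} into $L^2$- and $L^3$-integrability of $Z$; the subtlety there is just the bookkeeping of the conditioning — that $T_{t_i}$ is measurable with respect to $\sigma$, hence independent of $M$ under $\mathds{P}^{(x,0)}$, which legitimises disintegrating against $\mu_{t_i}$ — everything else being the textbook Lindeberg swapping method.
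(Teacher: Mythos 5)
Your proof is correct and follows essentially the same route as the paper: reduce the growth hypothesis on $u$ to finiteness of the second/third moment of $Z=u(M_{T_{t_1}},\dots,M_{T_{t_n}})$ by conditioning on the independent time change, using a layer-cake argument, and comparing against \eqref{tailfeller}/\eqref{tailfeller3}; then apply the classical CLT and the smooth-test-function Berry--Esseen bound. The only difference is that you write out the proof of the latter via the Lindeberg replacement method with hybrid sums and a third-order Taylor expansion (producing the constant $\frac{1}{6}\big(1+2\sqrt{2/\pi}\big)$ via Lyapunov's inequality $\sigma^3\le\rho$), whereas the paper simply cites this result.
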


\begin{proof}
    For the first result, it is enough to check that the second moment of $u(M_{T_1}, \cdots, M_{T_{t_n}})$ is finite and to apply the central limit theorem. If $p=0$, the result is trivial so we proceed with calculations for $p>0$. Note that for any $x_0>0$ we have
    \begin{align}
        \E^{(x,0)} \left| u(M_{T_{t_1}}, \cdots, M_{T_{t_n}}) \right|^2 \leq \,& \E^{(x,0)}[ |u(M_{T_{t_1}}, \cdots, M_{T_{t_n}})|^2 \1_{[\max_{1\leq i\leq n}|M_{T_{t_i}}|\leq x_0]} ]\notag \\
        &+ \E^{(x,0)}[ |u(M_{T_{t_1}}, \cdots, M_{T_{t_n}})|^2 \1_{[\max_{1\leq i\leq n}|M_{T_{t_i}}|> x_0]} ].
    \end{align}
    By the assumption on the growth of $u$ and by suitably choosing $x_0$, we obtain
    \begin{equation}\label{414}
        \begin{split}
            \E^{(x,0)} \left| u(M_{T_{t_1}}, \cdots, M_{T_{t_n}}) \right|^2 &\leq \sup_{\substack{ |x_i|\leq x_0\\ \text{ for any } i}} |u(x_1, \cdots, x_n) |^2\\ 
            &\qquad+ C \E^{(x,0)} \big[\max_{1\leq i\leq n} |M_{T_{t_i}}|^{2p}\big],
        \end{split}
    \end{equation}
    for some $C>0$. We deal with the term in \eqref{414} only since the first one is finite by the local boundedness of $u$.

    Note that 
    for any $C_{0}>0$ we have
    \begin{align*}
        \E^{(x,0)} \max_{1 \leq i \leq n} |M_{T_{t_i}}|^{2p} \leq & C_0 + \int_{C_0}^\infty \pr^{(x,0)} (\max_{1\leq i\leq n} |M_{T_{t_{i}}}|^{2p} > s) ds \notag \\
        \le  \, &  C_0 + \sum_{i=1}^n \int_{C_0}^\infty \pr^{(x,0)} (|M_{T_{t_{i}}}| > s^{1/2p}) ds.
    \end{align*}
    Further, by a simple conditioning argument we have
    \begin{align}
        \pr^{(x,0)} (|M_{T_{t_{i}}}| > s^{1/2p}) \, = \, \int_0^{+\infty} \pr^{(x,0)} (|M_{w}| > s^{1/2p}) \pr^{(x,0)} (T_{t_i} \in dw).
    \end{align}

Now we use a change of variables and the assumption \eqref{tailfeller} to get
 \begin{equation}\label{1552}
    \begin{split}
        &\E^{(x,0)} \max_{1 \leq i \leq n} |M_{T_{t_i}}|^2 \\
        &\quad \leq C_0 + 2p\sum_{i=1}^n \int_0^{+\infty} \int_{C_0^{1/2p}}^{+\infty} y^{2p-1}\pr^{(x,0)} (|M_w| > y) dy \, \pr^{(x,0)} (T_{t_i} \in dw) < +\infty.
    \end{split}
 \end{equation}
 Thus, the second moment is finite and we can apply the central limit theorem to finish the proof of the first claim.
 
 For the second claim, by the very same argument, it is possible to see that
\begin{equation}
    \begin{split}
        &\E^{(x,0)} \max_{1 \leq i \leq n} |M_{T_{t_i}}|^3 \\
        &\quad\leq C_0 + 3p\sum_{i=1}^n \int_0^{+\infty} \int_{C_0^{1/3p}}^{+\infty} y^{3p-1}\pr^{(x,0)} (|M_w| > y) dy \, \pr^{(x,0)} (T_{t_i} \in dw) < +\infty,
    \end{split}
\end{equation}
 so the claim follows by an application of the Berry-Esseen Theorem, see e.g. \cite[page 355]{o2014analysis}.
\end{proof}

Now we show that a class of processes satisfies the condition of the previous theorem.

\subsubsection{Isotropic L\'evy processes} In this subsection we assume that the process $M_t$, $t\geq0$, is an isotropic L\'evy process. This means that the process is characterized by the L\'evy-Khintchine characteristic exponent
\begin{align}
    \E^{(0,0)} [e^{i \xi \cdot M_t}] \, = \, \int_{\R^d} e^{i \xi \cdot x} p_t(dx) \, = \, e^{-t\Psi(\xi)}
\end{align}
for a function $\Psi$ with the representation
\begin{align*}
    \Psi(\xi) = \Sigma^2|\xi|^2 + \int_{\R^{d}} (1-\cos \left\langle \xi,x\right\rangle) \Pi(dx).
\end{align*}
Here $\Sigma\geq0$, the measure $\Pi(\cdot)$ is the L\'evy measure, i.e. it is such that
\begin{align}
    \int_{\R^d} (|x|^2 \wedge 1) \, \Pi (dx)<\infty,
\end{align}
and the symbol $p_t(\cdot)$ stands for the law of $M_t$ under $\pr^{(0,0)}$. A L\'evy process is isotropic since both the L\'evy measure $\Pi (\cdot)$ and the law $p_t(\cdot)$ are isotropic measures, i.e. they are invariant upon linear isometries on $\R^d$.

Stable distributions (with the zero skewness parameter) exemplify isotropic L\'evy measures, and the Cauchy distribution is a special case of these distributions.

In the following, Pruitt's function for our $d$-dimensional isotropic L\'evy process will be relevant, i.e. the function 
\begin{align*}
    h(r) \coloneqq d\frac{ \Sigma^2 }{ r^2 } + \int_{ \R^{d} } \l \frac{ |z|^2 }{ r^2 } \wedge 1\r \Pi(dz),\quad r>0.
\end{align*}
This function is strictly positive and decreasing (see more on\citep{Bogdan2015, Pruitt1981}) and in particular by \cite[Remark 1]{Bogdan2015} we have that
\begin{align}
    \pr^{(x,0)} (|M_{t}|>r) &\leq 24 h(r) t,\quad\text{for}\quad r\geq 2|x|.
    \label{taillevy}
\end{align}
Then we have the following result.
\begin{proposition}
\label{proplevy}
        Let $T_t$ be the inverse, the undershooting, {or the overshooting} of a subordinator. {If $T_t$ is the overshooting, assume that subordinator's L\'evy measure $\nu$ satisfies}
    \begin{align}\label{1610}
        \int_1^{+\infty} h \nu(dh) < +\infty.
    \end{align}
    
    If  the process $M_t$ is an isotropic L\'evy process in $\R^d$, then the condition \eqref{tailfeller} is satisfied for {all $x\in\R^d$ and $t\ge0$}, and for any $0<p<1$ such that
    \begin{align}
        \int_{|z|>1} |z|^{2p} \Pi(dz) < +\infty,
        \label{ass2p}
    \end{align}
    while the condition \eqref{tailfeller3} is satisfied for any $x\in\R^d$ and for any $0<p<1$ such that
    \begin{align}
        \int_{|z|>1} |z|^{3p} \Pi(dz) < +\infty.  \label{ass3p}
    \end{align}
\end{proposition}
\begin{proof}
    Using \eqref{taillevy} we get, for any constant $C_x$ which is enough larger than $2|x| \vee 1$, that
    \begin{align}
        & \int_0^{+\infty} \int_1^{+\infty} s^{2p-1} \pr^{(x,0)} (|M_w|> s) \, ds \,\mu_t(dw)\notag \\
        \leq \, & \frac{C_x-1}{2p} +  \int_0^{+\infty} \int_{C_x}^{+\infty} {s^{2p-1}}\pr^{(x,0)} (|M_w|> s) \, ds \,\mu_t(dw) \notag \\
        \leq \,&\frac{C_x-1}{2p} + 24 \int_1^{+\infty} s^{2p-1}h(s) ds\int_0^{+\infty}  w \mu_t(dw).
        \label{418}
    \end{align}
    The second integral in \eqref{418} is {the expectation of $T_t$. When $T_t$ is the inverse subordinator, it is} finite since the expectation of an inverse subordinator always exists and equals to the renewal measure, see \cite[Section 1.3]{bertoin1996}, while the undershooting $H_t$ is a.s. bounded by $t$. {If $T_t$ is the overshooting $D_t$, its expectation exists under assumption \eqref{1610}. Indeed, from \cite[Proposition 2 in Chapter III]{bertoin1996} it follows that the distribution of $D_t$ on $(t,\infty)$ is given by $\int_{(0,t)}\nu(dw-s)u(ds)$, where $u(ds)$ is the potential measure of the subordinator. Here we do not have to integrate at zero because we assume $\nu(0,\infty)=\infty$ or $b>0$, which implies that $u(\{0\})=0$. Thus
     \begin{align}
         \E^{(x,0)}D_t&=\int_{[t,\infty)} w\pr^{(x,0)} \l D_t \in dw \r \\
         &= t\,\pr^{(x,0)}(D_t=t)+\int_{(0,t)} \int_{(t,\infty)} w\nu (dw-s) u(ds).\label{1603}
     \end{align}
     By the change of the variables in the inner integral of \eqref{1603}, we get
     \begin{align*}
        \E^{(x,0)}&D_t= t\,\pr^{(x,0)}(D_t=t)+ \int_{(0,t)} \int_{(t-s,\infty)} (h+s)\nu (dh) u(ds)\\
        &\le t\,\pr^{(x,0)}(D_t=t)+\int_{(0,t)} \int_{(0,\infty)} h\nu (dh) u(ds)+\int_{(0,t)} s\,\nu (t-s,+\infty) u(ds)\\
        &\le t\,\pr^{(x,0)}(D_t=t)+u((0,t])\int_{(0,\infty)} h\nu (dh)+t\,(1-\pr^{(x,0)}(D_t=t))\\
        &=t+u((0,t])\int_{(0,\infty)} h\nu (dh)<\infty.
     \end{align*}
     Here, the finiteness of the integral $\int_{(0,\infty)} h\nu (dh)$ follows from the fact that $\nu$ is a L\'evy measure, and from the assumption \eqref{1610}. Further, $u((0,t])=\int_{(0,t]}u(ds)$ is the renewal function of the subordinator $\sigma$ and as such it is finite, see \cite[Chapter III]{bertoin1996}. Further, it holds that
     \begin{align*}
         \int_{(0,t)}\nu (t-s,+\infty) u(ds)=(1-\pr^{(x,0)}(H_t=t))=(1-\pr^{(x,0)}(D_t=t))
     \end{align*}
     which follows by \cite[Proposition 2 in Chapter III]{bertoin1996}.

     In other words, the second integral in \eqref{418} is finite for each choice of $T_t$, and all $t>0$.
} 
    
    For the first integral in \eqref{418}, note that
    \begin{align}
        \int_1^{+\infty} {s^{2p-1}} h(s) ds \, \leq \, &\frac{d \Sigma^2}{2-2p}  + \int_1^{+\infty} s^{2p-1}\int_{|z|>s} \Pi(dz) ds \label{420} \\& + \int_1^{+\infty} s^{2p-3} \int_{|z| \leq s} |z|^2 \Pi(dz).
        \label{421}
    \end{align}
    The integral in \eqref{420} is convergent by \eqref{ass2p} so we are left to check the integral in \eqref{421}. We have that
    \begin{align}
        \int_1^{+\infty} s^{2p-3} \int_{|z| \leq s} |z|^2 \Pi(dz) ds \, = \, &\frac{1}{2-2p}\int_{\R^d} (1 \vee |z|)^{2p-2} |z|^2 \Pi(dz) \notag \\
        = \, & \frac{1}{2-2p} \l \int_{|z|<1} |z|^2 \Pi(dz) + \int_{|z|>1} |z|^{2p} \Pi(dz) \r,
        \label{424}
    \end{align}
    where the integrals in \eqref{424} are finite by the definition of L\'evy measure and the assumption \eqref{ass2p}.
    
    The second statement follows by using the very same argument and the assumption \eqref{ass3p}.
\end{proof}

\begin{remark}
    For example, suppose that $M_t$, $t\geq0$, is a one-dimensional isotropic stable process with exponent $\Psi(\xi)=|\xi|^\alpha$, $\alpha \in (0,2)$. Then the conditions \eqref{ass2p} and \eqref{ass3p} are satisfied, respectively, by $p<\alpha/2$ and $p< \alpha /3$. In other words, in the case of the isotropic stable process $M_t$, we can indeed consider exploding functionals $u$ in Theorem \ref{thm1541}.
\end{remark}

\begin{remark}\label{complex_MC}
    Theorem \ref{thm1541}, allows us to state the following convergence in distribution
    \begin{equation*}
        \sqrt{N}(q_N(t_1, \cdots, t_n)-q(t_1, \cdots, t_n,x))\xrightarrow{\text{d}}N(0,\sigma(t_1, \cdots, t_n,x)^2) \quad\text{as}\quad N\to\infty.
    \end{equation*}
    Equations \eqref{414} and \eqref{1552} provide an upper bound to $\sigma(t_1, \cdots, t_n,x)^2$. The asymptotic confidence interval is
    \begin{align}
        q_N(t_1, \cdots, t_n) \pm \frac{\sigma(t_1, \cdots, t_n,x)}{\sqrt{N}}\varphi(\alpha/2),
    \end{align}
    where $\varphi(\alpha)$ is $\alpha$-quantile of the standard normal distribution. Moreover, it is clear that for a desired tolerance error $\varepsilon$, $N$ has to satisfy
    \begin{align}
        N> \varepsilon^{-2}\sigma(t_1, \cdots, t_n,x)^2 \varphi(\alpha/2)^2.
    \end{align}
    Thus, the calculation of such estimator 
    has the expected complexity of
    $$\varepsilon^{-2}\sigma(t_1, \cdots, t_n,x)^2 \varphi(\alpha/2)^2\, (n(\mathcal{K}^{\gamma}(h)+7)+\mathfrak{M}+2)$$
    if $T_t$ is the inverse or the undershooting of a subordinator, or
    $$\varepsilon^{-2}\sigma(t_1, \cdots, t_n,x)^2 \varphi(\alpha/2)^2\, (n(\mathcal{K}^{\Gamma}(h)+4)+\mathfrak{M}+2)$$
    if $T_t$ is the overshooting of a subordinator. Here $\mathfrak{M}$ is the expected complexity of sampling the Markov process $M$ in times $T_{t_0},\dots,T_{t_n}$ (see Remark \ref{complex_alg3}), and $\mathcal{K}^{\gamma}$ and $\mathcal{K}^{\Gamma}$ are expected running times of algorithms called in Algorithm \ref{alg:2} (lines 4 and 11) and Algorithm \ref{alg:3} (line 6), respectively, where $h=\max\{t_k-t_{k-1}:k=1,\dots,n\}$ denotes the maximum step.
\end{remark}

\section{Approximate simulation of time-changed processes}\label{secapproxdiff}
In this section, we deviate from the preceding one by considering Feller processes for which exact sampling is not necessarily available. {First, in Theorems \ref{strerror1} and \ref{thm:1714}, we bound the strong error of the approximation of a time-changed process, and we calculate the complexity of obtaining a \textit{good enough} approximation. Then, in Subsection \ref{sec:MCATTCD} we deal with Monte-Carlo approximations of functionals of these time-changed Feller processes. The main results are Theorem \ref{thm:l2error} which gives the $L^2$-error of the Monte-Carlo approximation, and Theorem \ref{thm:biased} which is a central limit theorem.}

In particular, we focus on $d$-dimensional diffusions of the form
\begin{align}\label{sde}
    X_t = X_{t_0} + \int_{t_0}^t a(s, X_s) ds + \int_{t_0}^t b(s, X_s) dW_s,
\end{align}
where $X_{t_0}\in\R^d$ is the (possible random) initial condition and $W=(W_t,\, t\geq0)$ is an $m$-dimensional Wiener process, whose components are independent Wiener processes. The functions, $a:\R^+ \times\R^d \to\R^d$ and $b:\R^+ \times\R^d \to\R^{d\times m}$ are measurable functions so the {equations \eqref{sde}  are}, component by component,
\begin{align}\label{sde_components}
    X_t^i = X_{t_0}^i + \int_{t_0}^t a^i(s, X_s) ds + \sum_{j=1}^m \int_{t_0}^t b^{i,j}(s, X_s) dW_s^j,\quad i=1,\dots,d.
\end{align}
It is well-known that the stochastic differential equation~\eqref{sde} has a unique (strong) solution whenever $X_{t_0}$ has a finite second moment and the coefficients satisfy the standard Lipschitz conditions, see \citep{kloeden1992, Protter2004}.

For a suitably time-changed diffusion process, $Y_t\coloneqq X_{T_t}$, stochastic calculus was developed in \cite{Kobayashi2011, Kobayashi2012, Kobayashi2016, Kobayashi2019}. This is deeply connected with \eqref{sde} by the so-called duality principle \citep[Theorem 4.2]{Kobayashi2011}, which states that if $X_t$ is the unique solution to \eqref{sde}, then the time-changed process $X_{T_t}$ is the unique solution to
\begin{align}\label{sde-time-changed}
    Y_t = X_{t_0} + \int_{t_0}^t a(T_s, Y_s) dT_s + \int_{t_0}^t b(T_s, Y_s) dW_{T_s}.
\end{align} 
The theory of equations having the form \eqref{sde-time-changed} is valid for suitable time-changes that are in the so-called synchronization, see \cite{Kobayashi2011} for details. In our case the connection with the theory of \eqref{sde-time-changed} is clear in the case $T_t=L_t$, i.e. when the time-change is the inverse of a subordinator (independent from the diffusion). On the other hand, the case when the time-change is the undershooting $H_t$ is not covered by the theory in \cite{Kobayashi2011} since $H_t$ is a left-continuous process (and so is $M_{\sigma^0_{L_t-}}$). In view of the independence between the diffusion and the random time process, the sampling of two components $(M_t,T_t)$ can be done separately as explained in the previous section. However sometimes it happens that the diffusion cannot be sampled exactly. In this case several approximation methods exist. The more classical ones are the Euler-Maruyama method, its semi-implicit version or the Milstein schemes, but many others exist (see, e.g., \cite{elena} for a scheme in the case when the SDE has a distributional drift). With a scheme at hand one can approximate the diffusion part to obtain an approximation of the time-changed process. Hence, in our context, the representation \eqref{sde-time-changed} is not needed to sample the time-changed process nor to evaluate the approximation error.

We focus on evaluating the approximation error when the diffusion process is sampled using the Euler-Maruyama method. In the literature, the approximation error results are known also for different schemes. For example, the semi-implicit version of the Euler-Maruyama method has been studied in \citep{Deng2020}, but only in the case when the time-change is performed with an independent inverse subordinator, although for very general drift coefficients. This method, however, besides the error induced by the approximation of the diffusion includes also the error induced by the approximation method for the paths of the inverse subordinator. Further, the authors of \citep{Deng2020} evaluate the strong convergence order (in the sense of $\E\ls|Y_t-Y_t^h|^2\rs$) which is of the same order as we get in Theorem \ref{strerror1}, cf. Theorems 3.1 and 3.2 in \citep{Deng2020}. Also, the Milstein method, which improves the Euler-Maruyama scheme but requires stronger assumptions on the diffusion coefficients, has been studied for time-changed diffusion via inverse subordinator, see \citep{Kobayashi2021} and also \cite{Liu2023} where a similar equation is considered. In these papers, the authors used an approximation method for the trajectories of inverse subordinators, too. Clearly, approximation of the paths of the random time introduces a further error that with our method is avoided. For a more elaborate statement, see Remark \ref{remarkerror} below. 

Without loss of generality, from now on we assume that $t_0=0$. The Euler-Maruyama approximation of the solution to SDE~\eqref{sde} with the  time-steps $0= t_0<t_1<t_2<t_3<\dots$, where $\lim_nt_n= \infty$ is given by \citep[Section 10.2]{kloeden1992}:
\begin{align}
    X_{t_{n+1}}^h = X_{t_n}^h + a(t_n, X_{t_n}^h) (t_{n+1} - t_n) + b(t_n, X_{t_n}^h) (W_{t_{n+1}} - W_{t_n}),
    \label{emapprox}
\end{align}
where $n\in\N\cup\{0\}$, and $X_{t_0}^h=X_{t_0}$, and $h$ denotes the maximum time step $h=\sup_{n\in\N}(t_n-t_{n-1})$. We will denote the interpolation of $X^h_{t_n}$, $n\in\N$, as $X_t^{i,h}$, i.e. component by component we have
\begin{align}
    X_t^{i,h} = X_{t_n}^{i,h}+ a^i(t_n, X_{t_n}^h) (t-t_n) + \sum_{j=1}^m b^{i,j} (t_n, X_{t_n}^h) (W_{t}^j - W_{t_n}^j), \quad t \in [t_n, t_{n+1}].
    \label{EM}
\end{align}
In essence, the Euler-Maruyama approximation $X_t^h$ depends on all the grid $\{t_i:i\in\N\}$, and not only on the maximum step size $h$, but our results will be uniform for all grids with the prescribed maximum step size $h$, so we proceed with this slightly abbreviated notation.

Usually, for ordinary differential equations, one evaluates the error as $|X_t-X_t^h|$. Accordingly, for a stochastic differential equation, the classical approach is to evaluate the strong error as $\E\ls|X_t-X_t^h|\rs$ which is the straightforward generalization of the deterministic case, see \citep[Section 9.6]{kloeden1992}. Instead, we adopt here the stronger criterion $\E\Big[\sup_{s\in[0,t]}|X_s-X_s^h|^2\Big]$ that bounds the strong error $\E|X_t-X_t^h|$ as follows
\begin{align}\label{criterion}
    (\E[|X_t-X_t^h|])^2 \leq \Big(\E\Big[\sup_{s\in[0,t]}|X_s-X_s^h|\Big]\Big)^2 \leq \E\Big[\sup_{s\in[0,t]}|X_s-X_s^h|^2\Big].
\end{align}
Here, and in the rest of Section \ref{secapproxdiff}, the probability measure $\mathds{P}$ (and the corresponding expectation $\E$) is from the underlying probability space on which the solution to SDE \eqref{sde} lives, and we adopt the convention that here the (independent) subordinator $\sigma$ is such that $\sigma_0=0$.

In \citep[Propositions 3.1 and 3.2]{Kobayashi2016} it is proved that, when $a$ and $b$ satisfy suitable assumptions, the approximation ~\eqref{EM} converges with the strong order of $\min{(\gamma,1/2)}$ (and with the weak order of 1) to the exact solution. The following lemma provides the explicit constants involved in the strong error of the approximation of the solution to~\eqref{sde} by the Euler-Maruyama method. Since the proof is an adaptation of the method from \cite{Kobayashi2016}, we postpone it to the Appendix. Before we present the {explicit} constants, we bring the assumptions on the SDE \eqref{sde} that will be used throughout the section.

\begin{assumption}{A}{}\label{A}
    The initial condition of the SDE \eqref{sde} satisfies $\E|X_{0}|^2<\infty$, while the drift and the diffusion coefficients satisfy
    \begin{align}
        |a^i(t,x)-a^i(t,y)| + |b^{i,j}(t,x)-b^{i,j}(t,y)| &\leq K|x-y|, \label{Lipschitz}\\
        |a^i(t,x)| + |b^{i,j}(t,x)| &\leq K(1+|x|), \label{Linear} \\
        |a^{i}(s,x)-a^i(t,x)| + |b^{i,j}(s,x)-b^{i,j}(t,x)| &\leq K (1+|x|) |s-t|^\gamma
    \label{tdiversi}
    \end{align}
    for all $x,y\in\R^d$, $s,t\geq0$, where $|\cdot|$ denotes the Euclidean norms in appropriate dimensions, and the constants $K>0$ and $\gamma>0$ do not depend on $i,j$.
\end{assumption}

\begin{lemma}\label{constants}
    Let \ref{A} hold and suppose that the Euler-Maruyama approximation $X^h_t$ is defined using an arbitrary grid $\{t_i:\, i\in\N\}$ with the maximum step size $h\in(0,1)$. Then there exist $\mathcal{C}_1,\mathcal{C}_2>0$ such that the strong error of the Euler-Maruyama approximation is finite and satisfies
    \begin{align}
        \E\Big[ \sup_{0\leq s\leq t} | X_s - X_s^h |^2 \Big] \leq \mathcal{C}_1 e^{\mathcal{C}_2t} h^{\min{(2\gamma,1)}}, \quad t>0,\, h\in(0,1),\label{strongerrorkobayashi}
    \end{align}
    where 
    \begin{align*}
        \mathcal{C}_1 &=	A\Big( 1+\frac{\mathcal{C}_2}{\mathcal{C}_2-2c_1}\Big),\quad \mathcal{C}_2= 2d(3K^2((1+4C_1^2)m+1) +1),
    \end{align*}
    and 
    \begin{align*}
    A& =12dK^2 (1+(1+4C_1^2)m) (1+4(1+m^2) dK^2) \Big(1+\frac{c_{0,1}}{2c_1}\Big) ,\\
    C_1& =1.30693...,\enskip c_{0,1}= 1/2+\E[|X_{t_0}|^2], \enskip c_1= 4dK\Big( 1 + \frac{1}{2}mK\Big).
    \end{align*}
\end{lemma}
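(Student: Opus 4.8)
The plan is to follow the strategy of \cite[Propositions 3.1 and 3.2]{Kobayashi2016}, the only novelty being that every numerical constant is tracked explicitly rather than absorbed into a generic $C$. The first step is a uniform second-moment estimate: both the exact solution $X$ of \eqref{sde} and the interpolated Euler--Maruyama process $X^h$ of \eqref{EM} satisfy $\EE\ls\sup_{0\le s\le t}|X_s|^2\rs\vee\EE\ls\sup_{0\le s\le t}|X^h_s|^2\rs\le c_{0,1}\,e^{2c_1 t}$, with $c_{0,1}=1+\EE\ls|X_{t_0}|^2\rs$ and $c_1=4dK\l1+\tfrac12 mK\r$. This comes from the linear-growth bound \eqref{Linear}, the coordinatewise inequality $|x_1+x_2+x_3|^2\le3(|x_1|^2+|x_2|^2+|x_3|^2)$, the $L^2$ maximal inequality for the stochastic integral (whose numerical constant is where $C_1$ enters the bookkeeping) together with It\^o's isometry, and Gronwall's lemma. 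Alongside this I would record the one-step time-regularity of the scheme: for $s\in[t_n,t_{n+1}]$ the oscillation $\EE\ls|X^h_s-X^h_{t_n}|^2\rs$ is bounded by a constant multiple of $h\,c_{0,1}e^{2c_1 t}$, read off directly from \eqref{EM} using \eqref{Linear} and $\EE\ls|W^j_s-W^j_{t_n}|^2\rs=s-t_n\le h$.

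The second step is the error recursion. Set $e_t\coloneqq\EE\ls\sup_{0\le s\le t}|X_s-X^h_s|^2\rs$ and let $\eta(r)$ denote the largest grid point not exceeding $r$. Coordinate by coordinate, $X^i_s-X^{i,h}_s$ equals a drift integral $\int_0^s\l a^i(r,X_r)-a^i(\eta(r),X^h_{\eta(r)})\r dr$ plus martingale integrals $\sum_j\int_0^s\l b^{i,j}(r,X_r)-b^{i,j}(\eta(r),X^h_{\eta(r)})\r dW^j_r$. I would split each integrand into three pieces,
\[
	a^i(r,X_r)-a^i(r,X^h_r)\;+\;a^i(r,X^h_r)-a^i(r,X^h_{\eta(r)})\;+\;a^i(r,X^h_{\eta(r)})-a^i(\eta(r),X^h_{\eta(r)}),
\]
bounded respectively by the spatial Lipschitz condition \eqref{Lipschitz} (yielding, after Cauchy--Schwarz and taking suprema, a term $\propto\int_0^t e_s\,ds$), by \eqref{Lipschitz} combined with the time-regularity estimate (a term of order $h$), and by the time-H\"older condition \eqref{tdiversi} combined with the moment bound (a term of order $h^{2\gamma}$, using $|r-\eta(r)|^{2\gamma}\le h^{2\gamma}$). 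For the martingale part one first applies the $L^2$ maximal inequality and It\^o's isometry — which is where the factor $(1+4C_1^2)$ and the Wiener multiplicity $m$ appear — and then splits in the same way. Using $\int_0^t c_{0,1}e^{2c_1 s}ds\le\tfrac{c_{0,1}}{2c_1}e^{2c_1 t}$ (the origin of the factor $1+\tfrac{c_{0,1}}{2c_1}$ in $A$), $h^{2\gamma}\vee h=h^{\min(2\gamma,1)}$ for $h\in(0,1)$, and collecting the remaining factors $24dK^2$ and $1+4(1+m^2)dK^2$ from $K$, the dimensions $d,m$ and the combinatorics of the splittings, one arrives at
\[
	e_t\;\le\;A\,e^{2c_1 t}\,h^{\min(2\gamma,1)}\;+\;\mathcal{C}_2\int_0^t e_s\,ds,\qquad t>0,
\]
with $A$ and $\mathcal{C}_2$ exactly as in the statement.

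The final step is Gronwall. One checks from the explicit formulas that $\mathcal{C}_2>2c_1$ (a short computation using $1+4C_1^2>2$, $m\ge1$, and a case distinction on $K$). Then the integral form of Gronwall's lemma applied to the last display, together with $\int_0^t e^{2c_1 s}e^{\mathcal{C}_2(t-s)}ds\le e^{\mathcal{C}_2 t}/(\mathcal{C}_2-2c_1)$ and $e^{2c_1 t}\le e^{\mathcal{C}_2 t}$, gives
\[
	e_t\;\le\;A\,e^{\mathcal{C}_2 t}\,h^{\min(2\gamma,1)}\l1+\frac{\mathcal{C}_2}{\mathcal{C}_2-2c_1}\r\;=\;\mathcal{C}_1\,e^{\mathcal{C}_2 t}\,h^{\min(2\gamma,1)},
\]
which is \eqref{strongerrorkobayashi}; in particular $e_t<\infty$, so the strong error is finite.

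I expect the main obstacle to be purely the bookkeeping: there is no conceptual difficulty beyond the classical Euler--Maruyama convergence analysis, but one has to keep the maximal-inequality constant, the three-term Cauchy--Schwarz splittings, the linear-growth estimates and the two Gronwall applications mutually consistent, and in particular verify the strict inequality $\mathcal{C}_2>2c_1$ that makes the closed-form constant $\mathcal{C}_1$ well defined.
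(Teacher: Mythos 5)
Your decomposition of the coefficient difference is genuinely different from the paper's, and this is where the argument diverges. You pass through $X^h_r$ and $X^h_{\eta(r)}$:
\begin{align*}
a^i(r,X_r)-a^i(\eta(r),X^h_{\eta(r)})
&=\bigl[a^i(r,X_r)-a^i(r,X^h_r)\bigr]
+\bigl[a^i(r,X^h_r)-a^i(r,X^h_{\eta(r)})\bigr]\\
&\qquad{}+\bigl[a^i(r,X^h_{\eta(r)})-a^i(\eta(r),X^h_{\eta(r)})\bigr],
\end{align*}
so the middle and last terms require the one-step time-regularity and moment growth of the \emph{scheme} $X^h$. The paper instead passes through $X_{\eta(r)}$ (see \eqref{1352}), requiring the one-step regularity and moment bound of the \emph{exact solution} $X$ — precisely the two statements proved in Lemma~\ref{difference}. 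With your split, those two auxiliary estimates must be re-derived for $X^h$ from scratch (the argument is a bit different because $X^h$ involves a frozen argument $X^h_{\eta(r)}$, so the It\^o-plus-Gronwall step takes a slightly different form), and the paper gives you nothing off the shelf.

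This matters because the lemma is a constant-tracking lemma. You assert, without derivation, that after "collecting the remaining factors $24dK^2$ and $1+4(1+m^2)dK^2$ \ldots one arrives at $A$ and $\mathcal{C}_2$ exactly as in the statement." With your decomposition the intermediate factors are not the same: for instance, the paper's $4(1+m^2)(1+c_{0,1})dK^2$ in the one-step estimate for $X$ comes from a specific Cauchy--Schwarz step in Lemma~\ref{difference}, while the analogous bound for $X^h_r-X^h_{\eta(r)}$ uses $b(\eta(r),X^h_{\eta(r)})(W_r-W_{\eta(r)})$ directly and yields a different numerical prefactor. The constants would need to be verified to match, and I do not see why they should.

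Two smaller remarks. First, you state a supremum moment bound $\EE\ls\sup_{0\le s\le t}|X_s|^2\rs\vee\EE\ls\sup_{0\le s\le t}|X^h_s|^2\rs\le c_{0,1}e^{2c_1 t}$ with the same constants as Lemma~\ref{difference}, but that lemma only proves the pointwise bound $\EE|X_t|^2\le c_{0,1}e^{2c_1 t}$; a supremum bound would require Doob/Burkholder--Davis--Gundy and would bring the BDG constant $C_1$ into the exponent rate, so the displayed constants would not survive. (In fact you only need pointwise bounds for $X^h_{\eta(r)}$, so this is an overclaim rather than a fatal error, but it signals imprecision in exactly the part of the lemma that matters.) Second, the paper produces the factor $(1+4C_1^2)$ by applying It\^o's formula to $\tilde{X}^2_s$, splitting the resulting representation into $I_1,I_2,I_3$, and absorbing the BDG term via $I_2\le\frac12 Z+4C_1^2 I_3$ to get $Z\le 2I_1+2(1+4C_1^2)I_3$. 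Your description — writing $\tilde X_s$ as drift-plus-martingale and applying BDG directly after squaring — is a valid alternative, but it does not naturally produce this factor in this position; you would have to exhibit the computation showing you land on the same numbers.

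So the idea is sound and would produce a bound of the stated \emph{form}, but the proposal as written has a real gap: it takes a different route and simply asserts that the same explicit constants emerge, which is the entire content of the lemma.
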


{It is noteworthy that Lemma~\ref{constants} is not a simple modification of the classical bound for the strong error of the Euler method \cite[Chapter 10]{kloeden1992}, which takes the form $Ce^{C(t^2+t)}$. In contrast, it is an improved version under the assumptions in~\ref{A}. For further details, refer to \cite[Remark 3.1]{Kobayashi2016}.}

Furthermore, by using \cite[Proposition 3.2]{Kobayashi2016} it is also possible to evaluate the weak error, i.e.
\begin{align}
	|\E[u(X_t) -u(X_t^h)]| \leq h C e^{Ct}
\end{align}
for a suitable constant $C>0$. However, this estimates requires several conditions on the growth of the coefficients of the SDE and of the solution to the associated transition density.

\begin{remark}\label{remarkerror}
    In the following theorems, we provide the strong errors with explicit constants (which come from Lemma \ref{constants}). These results not only extend, but also enhance the results available in the existing literature for two reasons. First, there is an increased flexibility in options concerning a time-change. Second, Algorithms \ref{alg:2} and \ref{alg:3} return exact paths, whereas existing literature approximates both the diffusion and the time-change paths, thereby inducing two errors (see the proof of Theorem 3.1 in \citep{Kobayashi2016}):
    \begin{align}\label{eq:2034}
        \E\Big[ \sup_{0\leq s\leq t}|Y_s -Y_s^h|^2 \Big] \leq 2\E\Big[ \sup_{0\leq s\leq t}|X_{L_s} -X_{L_s^h}|^2 \Big] + 2\E\Big[ \sup_{0\leq s\leq t}|X_{L_s^h} -Y_{L_s^h}^h|^2 \Big].
\end{align}
    The first right-hand side term concerns the approximation of the inverse subordinator path, while the second term concerns the approximation of a diffusion process. Obviously, by using the exact sampling method for the inverse subordinator that we derived in Section \ref{secpathinv}, in our approximations there is no need for a division as in \eqref{eq:2034}, provided that one consider an exact simulation whose output can be controlled with machine precision under the L\'evy-Prokhorov metric, as explained in Section \ref{sec:prelim}.
\end{remark}

Here is the algorithm that we use to approximate the process $Y_t = X_{T_t}$.

\begin{algorithm}[H]\label{alg:5}
	\caption{generating vectors $(Y^h_{t_1}, \cdots, Y^h_{t_n})$}
	\KwData{
		 $0=t_0 <t_1 <\cdots <t_{n-1} <t_n=t$, $n\in\N$, $x_0 \in\R^d, h\in(0,1)$,
	}
    Sample $T_{t_i}$ for $i=1,\cdots,n$, with Algorithm \ref{alg:2} or \ref{alg:3} \\
    Sample $X_s^h$ on the grid $\{0,h,2h,3h,\dots\}\cup\{T_{t_1},\dots,T_{t_n}\}$ with the scheme \eqref{EM} \\
    $Y_{t_i}^h \gets X_{T_{t_i}}^h$ for $i=1, \cdots, n$
\end{algorithm}
\noindent We note that the sampling in line 2 of the previous algorithm finishes when it is achieved that for some $k\in\N$ we get $kh>T_{t_n}$.

\begin{theorem}\label{strerror1}
    Let be $Y_t = X_{T_t}$ where $X_t$ is the solution to~\eqref{sde-time-changed} satisfying \ref{A}. For $h\in(0,1)$, let $Y_s^h = X_{T_s}^h$,  $s\in \{ t_1,\cdots,t_n \}$ where $0=t_0 < t_1< \cdots < t_n=t$, be the approximation of $Y_s$ given in Algorithm \ref{alg:5}. Then, for $T_t$ being the undershooting, the strong error is finite and satisfies
    \begin{align}
        \E\Big[ \max_{s\in \{ t_1,\cdots,t_n \}} |Y_s - Y_s^h|^2 \Big] \leq \mathcal{C}_1 e^{ \mathcal{C}_2 t  }h^{\min(2\gamma,1)}.
    \end{align}
    Additionally, for $T_t$ being the inverse of a subordinator with the Laplace exponent $\phi$, the strong error is finite and satisfies
    \begin{align}
    \E\Big[ \max_{s\in \{ t_1,\cdots,t_n \}} |Y_s - Y_s^h|^2 \Big] \leq \mathcal{C}_1 \Big( 1+ \frac{ e^{\mathcal{C}_3 t}}{\phi(\mathcal{C}_3)-\mathcal{C}_2 } \Big)  h^{\min(2\gamma,1)},
	\end{align}
    where $\mathcal{C}_3>0$ is any constant such that $\phi(\mathcal{C}_3)>\mathcal{C}_2$. 
\end{theorem}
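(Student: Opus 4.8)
The plan is to reduce the statement to Lemma~\ref{constants} by conditioning on the (independent) time change and then controlling an exponential moment of $\max_{s\in\{t_1,\dots,t_n\}}T_s$.

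I would first record the structural facts. By construction $Y_s=X_{T_s}$ and $Y^h_s=X^h_{T_s}$, where $X^h$ is the linearly interpolated Euler--Maruyama scheme of~\eqref{sde} run on a grid of maximum step $h$, and where $T=(T_s)_{s\ge0}$ — either the undershoot $H$ or the inverse $L$ — is independent of the pair $(X,X^h)$, both being functionals of the Wiener process $W$ only. Since $\{T_s:s\in\{t_1,\dots,t_n\}\}\subseteq[0,T^{*}_t]$ with $T^{*}_t:=\max_{s\in\{t_1,\dots,t_n\}}T_s$, one has the pathwise bound
\begin{equation*}
\max_{s\in\{t_1,\dots,t_n\}}|Y_s-Y^h_s|^2\le \sup_{0\le u\le T^{*}_t}|X_u-X^h_u|^2 .
\end{equation*}
Conditioning on $T$ (so that $T^{*}_t$ becomes a constant independent of $(X,X^h)$) and applying~\eqref{strongerrorkobayashi} of Lemma~\ref{constants} on the horizon $[0,T^{*}_t]$ — legitimate since that lemma is uniform over grids of maximum step $h$ and over the time horizon — the tower property yields
\begin{equation*}
\EE\!\left[\max_{s\in\{t_1,\dots,t_n\}}|Y_s-Y^h_s|^2\right]\le \mathcal{C}_1\, h^{\min(2\gamma,1)}\,\EE\!\left[e^{\mathcal{C}_2 T^{*}_t}\right].
\end{equation*}

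It then remains to bound $\EE[e^{\mathcal{C}_2 T^{*}_t}]$ in each case. For the undershoot, $H_s=\sigma^{0}_{L_s-}\le s\le t$ for every $s\le t$, so $T^{*}_t\le t$ deterministically and $\EE[e^{\mathcal{C}_2 T^{*}_t}]\le e^{\mathcal{C}_2 t}$; this gives the first claim with the same constants as in Lemma~\ref{constants}. For the inverse subordinator, $L$ is nondecreasing, hence $T^{*}_t=L_{t_n}=L_t$, and I would estimate its exponential moment through a Chernoff bound applied to the subordinator via its Laplace exponent: for any $\mathcal{C}_3>0$ with $\phi(\mathcal{C}_3)>\mathcal{C}_2$,
\begin{equation*}
\PP(L_t>u)=\PP(\sigma_u\le t)\le e^{\mathcal{C}_3 t}\,\EE\!\left[e^{-\mathcal{C}_3\sigma_u}\right]=e^{\mathcal{C}_3 t-u\phi(\mathcal{C}_3)},
\end{equation*}
so that from $\EE[e^{\mathcal{C}_2 L_t}]=1+\mathcal{C}_2\int_0^{\infty}e^{\mathcal{C}_2 u}\PP(L_t>u)\,du$ one obtains, after integration,
\begin{equation*}
\EE\!\left[e^{\mathcal{C}_2 L_t}\right]\le 1+\frac{\mathcal{C}_2\, e^{\mathcal{C}_3 t}}{\phi(\mathcal{C}_3)-\mathcal{C}_2},
\end{equation*}
which is of the announced form $\mathcal{C}_1\bigl(1+e^{\mathcal{C}_3 t}/(\phi(\mathcal{C}_3)-\mathcal{C}_2)\bigr)h^{\min(2\gamma,1)}$ up to the harmless factor $\mathcal{C}_2$. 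Finiteness is automatic from the same computation, the integral converging precisely when $\phi(\mathcal{C}_3)>\mathcal{C}_2$; such $\mathcal{C}_3$ exists because under the infinite-activity hypothesis $\phi$ is continuous, strictly increasing and unbounded.

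The main obstacle is exactly this explicit control of the exponential moment of $L_t$: one must identify $\{L_t>u\}$ with $\{\sigma_u\le t\}$ and exploit the Laplace exponent to get a geometrically decaying tail, which both ensures finiteness and produces the factor $1/(\phi(\mathcal{C}_3)-\mathcal{C}_2)$. A secondary subtlety, to be handled with care, is that in the undershoot case one must \emph{not} appeal to the stochastic-calculus theory of~\eqref{sde-time-changed} — which fails for the left-continuous $H$ — since the argument uses only that $Y^h_s=X^h_{H_s}$ is the Euler scheme of the ordinary SDE~\eqref{sde} sampled at the independent random times $H_s$, to which Lemma~\ref{constants} applies verbatim.
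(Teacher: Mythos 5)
Your proposal takes essentially the same route as the paper: bound $\max_{s}|Y_s-Y^h_s|^2$ pathwise by $\sup_{0\le u\le T_t}|X_u-X^h_u|^2$ (using that the time change is nondecreasing, so $T^{*}_t=T_{t_n}=T_t$), condition on the independent time change, apply Lemma~\ref{constants} to get $\mathcal{C}_1\,\EE[e^{\mathcal{C}_2 T_t}]\,h^{\min(2\gamma,1)}$, and then control the exponential moment $\EE[e^{\mathcal{C}_2 T_t}]$ --- trivially by $e^{\mathcal{C}_2 t}$ for the undershoot, and via an exponential-moment estimate for $L_t$ in the inverse case. The one genuine difference is that where the paper invokes Lemma~\ref{our} (itself proved only by reference to an external lemma) for $\EE[e^{\mathcal{C}_2 L_t}]$, you give a self-contained derivation: the Chernoff bound $\PP(L_t>u)=\PP(\sigma_u\le t)\le e^{\mathcal{C}_3 t-u\phi(\mathcal{C}_3)}$ fed into the tail-integral identity. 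This yields $1+\mathcal{C}_2\,e^{\mathcal{C}_3 t}/(\phi(\mathcal{C}_3)-\mathcal{C}_2)$ instead of $1+e^{\mathcal{C}_3 t}/(\phi(\mathcal{C}_3)-\mathcal{C}_2)$. Since $\mathcal{C}_2=2d\bigl(3K^2((1+4C_1^2)m+1)+1\bigr)>1$ always, your bound is strictly weaker than the one in the theorem's display, so the extra factor is not harmless if the target is the exact stated inequality: you would need to reproduce the sharper estimate claimed in Lemma~\ref{our} or accept the larger constant. The rate in $h$ and the exponential growth in $t$ are unaffected, and everything else --- independence of $(X,X^h)$ from $T$, uniformity of Lemma~\ref{constants} over grids and over the time horizon, existence of $\mathcal{C}_3$ from unboundedness of $\phi$ under infinite activity, and the remark that the undershoot case must not lean on the stochastic calculus of~\eqref{sde-time-changed} --- is handled correctly and matches the paper's reasoning.
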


\begin{proof}
    By Lemma \ref{constants} and the independence between the $X_t$ and $T_t$, we observe that
    \begin{align}\label{2339}
        \E\Big[ \max_{s\in \{ t_1,\cdots,t_n \}} |Y_s - Y_s^h|^2\Big] &= \E\Big[ \max_{s\in \{ t_1,\cdots,t_n \}}  |X_{T_s} - X_{T_s}^h|^2 \Big]\\
        &\leq\E\Big[ \max_{s\in \{0,h,2h,3h,\dots\}\cup\{T_{t_1},\dots,T_{t_n}\}}  |X_s - X_s^h|^2 \Big]\\
        &\leq \E\Big[ \sup_{0\leq s\leq T_t} |X_s - X_s^h|^2 \Big]\\
        &\leq \mathcal{C}_1 \E[e^{ \mathcal{C}_2 T_t}]\, h^{\min(2\gamma,1)}.\label{eq:1228}
    \end{align}

    Now, when $T_t$ is the undershooting, we use that $T_t\leq t$ to obtain the claim.
    
    If, on the other hand, $T_t$ is the inverse of a subordinator, by Lemma \ref{our} for any $\mathcal{C}_3>0$ such that $\phi(\mathcal{C}_3) >\mathcal{C}_2$ it holds
    \begin{align}
        \E[e^{\mathcal{C}_2 L_t}] \leq 1+ \frac{ e^{\mathcal{C}_3 t}}{\phi(\mathcal{C}_3)-\mathcal{C}_2 },
	\label{momexpl}
    \end{align}
    which gives the claim.
\end{proof}

The time-change with the overshooting requires a separate statement since the strong error is finite under some more restrictive assumptions. Recall that the potential measure $u$ of the subordinator $\sigma$ is defined as $u(A)\coloneqq \E[ \int_0^\infty \1_A(\sigma_x)dx ]=\int_A u(ds)$, $A\in \mathcal{B}([0,\infty))$. Now we have the statement for the strong error of the time-changed process with the overshooting.

\begin{theorem}\label{thm:1714}
    Let $Y_t= X_{D_t}$ where $X_t$ is the solution to~\eqref{sde-time-changed}  satisfying \ref{A}. For $h\in(0,1)$, let $Y_s^h= X_{D_s}^h$, $s\in \{ t_1,\cdots,t_n \}$ where $0=t_0 < t_1< \cdots< t_n=t$, be the approximation of $Y_t$ as in Algorithm \ref{alg:5}. If $M(\mathcal{C}_2)<\infty$, where
    \begin{align}
        \int_1^{+\infty} e^{c s} \nu(ds) \eqqcolon M(c),
    \label{lunchtime}
    \end{align}
    then the strong error is finite and it holds that
    \begin{align}
        \E\Big[ \max_{s\in \{ t_1,\cdots,t_n \}}  |Y_s - Y_s^h|^2 \Big] \leq \mathcal{C}_1 e^{ \mathcal{C}_2 t } \Big( e^{\mathcal{C}_2\max\{1,t\}}+ M(\mathcal{C}_2) u\big((0,t]\big) \Big) h^{\min(2\gamma,h)}.
    \end{align}
\end{theorem}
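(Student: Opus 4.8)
The plan is to follow the proof of Theorem \ref{strerror1}: reduce the strong error to an exponential moment of the time change, here the overshoot $D_t$, and then control that moment by means of the explicit law of $D_t$ already exploited in the proof of Proposition \ref{prop:overshootexact}.

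First I would reduce as in \eqref{2339}--\eqref{eq:1228}. Since $s\mapsto D_s$ is non-decreasing we have $D_s\le D_{t_n}=D_t$ for all $s\in\ll t_1, \cdots, t_n\rr$, hence $\max_s|Y_s-Y_s^h|^2=\max_s|X_{D_s}-X_{D_s}^h|^2\le\sup_{0\le r\le D_t}|X_r-X_r^h|^2$. Conditioning on the path of the subordinator $\sigma$ (equivalently on $D$) and using the independence of the diffusion, and of its Euler--Maruyama scheme, from $D$, Lemma \ref{constants} applied at the random horizon $D_t$ gives
\begin{align*}
\EE\SB{\max_{s\in\ll t_1, \cdots, t_n\rr}|Y_s-Y_s^h|^2}\le\EE\SB{\sup_{0\le r\le D_t}|X_r-X_r^h|^2}\le\mathcal{C}_1\,\EE\SB{e^{\mathcal{C}_2 D_t}}\,h^{\min(2\gamma,1)}.
\end{align*}
So the whole statement reduces to proving $\EE[e^{\mathcal{C}_2 D_t}]\le e^{\mathcal{C}_2 t}\l e^{\mathcal{C}_2(t\wedge1)}+M(\mathcal{C}_2)\,u\big((0,t]\big)\r<+\infty$, with $M(\cdot)$ as in \eqref{lunchtime}.

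For this I would recall, from the proof of Proposition \ref{prop:overshootexact} (which rests on \cite[Proposition 2, Chapter III]{bertoin1996} and on $u(\{0\})=0$, valid because $\nu(0,+\infty)=+\infty$), that $D_t$ has an atom of mass $\mathds{P}^{(x,0)}(D_t=t)$ at $t$, that $\mathds{P}^{(x,0)}(D_t\in dw)=\int_{(0,t)}\nu(dw-s)u(ds)$ on $(t,+\infty)$, and that $\int_{(0,t)}\nu\big((t-s,+\infty)\big)u(ds)=1-\mathds{P}^{(x,0)}(D_t=t)\le1$. Plugging this in, applying Fubini, and substituting $y=w-s$ in the inner integral yields
\begin{align*}
\EE\SB{e^{\mathcal{C}_2 D_t}}=e^{\mathcal{C}_2 t}\mathds{P}^{(x,0)}(D_t=t)+\int_{(0,t)}e^{\mathcal{C}_2 s}\l\int_{(t-s,+\infty)}e^{\mathcal{C}_2 y}\nu(dy)\r u(ds).
\end{align*}
Then I would split the inner integral at level $1$: the part over $(1,+\infty)$ is exactly $M(\mathcal{C}_2)$, and since $s<t$ and $\int_{(0,t)}u(ds)\le u\big((0,t]\big)$ this produces at most $e^{\mathcal{C}_2 t}M(\mathcal{C}_2)u\big((0,t]\big)$. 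For the complementary bounded part, after isolating the large-overshoot contribution one has $e^{\mathcal{C}_2 y}\le e^{\mathcal{C}_2(t\wedge1)}$ on the relevant set, while the total $\nu\otimes u$ mass there is bounded by $\int_{(0,t)}\nu((t-s,+\infty))u(ds)\le1$; combining this with the atom term $e^{\mathcal{C}_2 t}\mathds{P}^{(x,0)}(D_t=t)$, and using that the two events are disjoint, their contributions add up to at most $e^{\mathcal{C}_2 t}e^{\mathcal{C}_2(t\wedge1)}$. Summing the two parts gives the claimed bound; it is finite because $M(\mathcal{C}_2)<+\infty$ by hypothesis and the renewal function $u$ is finite on bounded sets, see \cite[Chapter III]{bertoin1996}. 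Multiplying through by $\mathcal{C}_1 h^{\min(2\gamma,1)}$ concludes.

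The main obstacle is the exponential moment $\EE[e^{\mathcal{C}_2 D_t}]$. In contrast with the inverse subordinator $L_t$, which has exponential moments of every order (cf. \eqref{momexpl}), and with the undershoot $H_t\le t$, the overshoot $D_t$ carries the tail of the jump of $\sigma$ straddling $t$, so an exponential moment of $D_t$ exists only under an exponential tail condition on $\nu$, namely $M(\mathcal{C}_2)<+\infty$. The delicate bookkeeping is to organise the splitting so that the jump integral --- which, by infinite activity, would diverge if carried down to $0$ --- is confined to $(1,+\infty)$ where $M(\mathcal{C}_2)$ controls it, while the complementary part stays genuinely bounded; this is precisely where the renewal identity $\int_{(0,t)}\nu((t-s,+\infty))u(ds)=1-\mathds{P}^{(x,0)}(D_t=t)$ enters and where the factor $e^{\mathcal{C}_2(t\wedge1)}$ originates.
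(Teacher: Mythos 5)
Your overall strategy is the same as the paper's: reduce the strong error to $\mathcal{C}_1\,\EE[e^{\mathcal{C}_2 D_t}]\,h^{\min(2\gamma,1)}$ via independence of the diffusion and the time change, write the law of $D_t$ on $(t,\infty)$ as $\int_{(0,t)}\nu(dw-s)\,u(ds)$ plus the atom at $t$ (as in the proof of Proposition~\ref{prop:overshootexact}), change variables $z=w-s$, pull out $e^{\mathcal{C}_2 s}\le e^{\mathcal{C}_2 t}$, and split the inner jump-size integral. The Fubini step, the change of variables, the identity $\int_{(0,t)}\nu((t-s,\infty))u(ds)=1-\mathds{P}^{(x,0)}(D_t=t)$, and the use of $M(\mathcal{C}_2)$ for the tail are all in line with the paper.

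There is, however, a concrete gap in your estimate of the bounded part. Having placed the cut at level $1$, the bounded piece ranges over $z\in(t-s,1]$, and you assert $e^{\mathcal{C}_2 z}\le e^{\mathcal{C}_2(t\wedge1)}$ there. For $t<1$ this inequality is false: $z$ can be arbitrarily close to $1>t=t\wedge1$, so the only valid bound on this set is $e^{\mathcal{C}_2 z}\le e^{\mathcal{C}_2}$. (Moving the cut to $z=t$ to save the $e^{\mathcal{C}_2 t}$ factor does not work either, since then the tail $\int_{(t,\infty)}e^{\mathcal{C}_2 z}\nu(dz)$ is no longer dominated by $M(\mathcal{C}_2)=\int_1^\infty e^{\mathcal{C}_2 z}\nu(dz)$ once $t<1$, because of the extra, possibly huge, mass of $\nu$ on $(t,1]$.) In fact the paper's own proof does not establish the stated inequality: its two-case argument (cut at $1$ for $t\le1$, cut at $t$ for $t>1$) yields the factor $e^{\mathcal{C}_2(t\vee1)}$, not $e^{\mathcal{C}_2(t\wedge1)}$, so the $\wedge$ in the statement is almost certainly a typo for $\vee$ (just as $h^{\min(2\gamma,h)}$ should read $h^{\min(2\gamma,1)}$). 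Your uniform cut at $1$ is actually the cleaner route: done correctly with $e^{\mathcal{C}_2 z}\le e^{\mathcal{C}_2}$, it gives $\EE[e^{\mathcal{C}_2 D_t}]\le e^{\mathcal{C}_2 t}\bigl(e^{\mathcal{C}_2}+M(\mathcal{C}_2)\,u((0,t])\bigr)$ for every $t>0$, which avoids the case distinction, dominates the paper's two bounds, and yields the theorem with $\vee$ in place of $\wedge$.
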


\begin{proof}
    The proof can be conducted similarly to the proof of Theorem \ref{strerror1}. We have
    \begin{align}
        \E\Big[ \max_{s\in \{ t_1,\cdots,t_n \}}  |X_{D_s} - X_{D_s}^h|^2 \Big] &\leq \E\Big[ \sup_{0\leq s\leq D_t} |X_s - X_s^h|^2 \Big]\\
        &\leq \mathcal{C}_1 \E[e^{ \mathcal{C}_2 D_t }] h^{\min(2\gamma,1)}.\label{eq:1227-a}
    \end{align}
    We are left to bound $\E[e^{ \mathcal{C}_2 D_t }]$. Recall that $D_t$ on $(0,t)$ has the distribution $\int_{(0,t)}\nu(dw-s)u(ds)$ and that $\int_{(0,t)}\nu (t-s,+\infty) u(ds)=(1-\pr (H_t=t))=(1-\pr (D_t=t))$, see \cite{bertoin1996} and the comments in the proof of Proposition \ref{proplevy}. We have
    \begin{align}
        \E[e^{\mathcal{C}_2 D_t}]=  e^{\mathcal{C}_2 t} \pr (D_t=t) + \int_{(0,t)}\int_{(t,+\infty)} e^{\mathcal{C}_2 x} \nu(dx-s) u(ds).
    \label{1250}
    \end{align}
    By Fubini's theorem and by the change of variables in the second integral we get
    \begin{align}
        \E[e^{\mathcal{C}_2D_t}] &= e^{\mathcal{C}_2 t} \pr (D_t=t) +\int_{(0,t)} e^{\mathcal{C}_2s}\int_{(t-s,+\infty)} e^{\mathcal{C}_2z} \nu(dz) u(ds)\\
        &\leq e^{\mathcal{C}_2 t} \pr (D_t=t) + e^{\mathcal{C}_2t}\int_{(0,t)} \int_{(t-s,\infty)} e^{\mathcal{C}_2 z} \nu(dz) u(ds).\label{1703}
    \end{align}
    To bound the integral in \eqref{1703}, we distinguish two cases: $t\leq1$ and $t>1$. If $t\leq1$, then for the integral in \eqref{1703}  we have
    \begin{align}
        \int_{(0,t)} \int_{(t-s,\infty)} e^{\mathcal{C}_2 z} \nu(dz) u(ds)&= \int_{(0,t)} \int_{(t-s,1)} e^{\mathcal{C}_2 z} \nu(dz) u(ds)\nonumber\\
        &\hspace{4em}+\int_{(0,t)} \int_{[1,\infty)}e^{\mathcal{C}_2 z} \nu(dz) u(ds)\nonumber\\
        &\leq e^{\mathcal{C}_2}\int_{(0,t)}\nu (t-s,+\infty) u(ds) + u\big((0,t]\big)M(\mathcal{C}_2)\nonumber\\
        &= e^{\mathcal{C}_2}(1-\pr (H_t=t)) + u\big((0,t]\big)M(\mathcal{C}_2).\label{1949a}
    \end{align}
    Thus, by collecting the bounds \eqref{1703} and \eqref{1949a} we obtain
    \begin{align}\label{1755a}
        \E[e^{\mathcal{C}_2D_t}]\leq e^{\mathcal{C}_2 t}\l e^{\mathcal{C}_2}+ M(\mathcal{C}_2)u\big((0,t]\big)\r,\quad t\leq1.
    \end{align}

    For $t>1$, we have a similar calculation where we separate the integral in \eqref{1703} in the following way:
    \begin{align}
         \int_{(0,t)} \int_{(t-s,\infty)} e^{\mathcal{C}_2 z} \nu(dz) u(ds)&= \int_{(0,t)} \int_{(t-s,t)} e^{\mathcal{C}_2 z} \nu(dz) u(ds)\nonumber\\
         &\hspace{4em}+\int_{(0,t)} \int_{[t,\infty)}e^{\mathcal{C}_2 z} \nu(dz) u(ds)\nonumber\\
        &\le e^{\mathcal{C}_2 t}\int_{(0,t)}\nu (t-s,+\infty) u(ds)+u\big((0,t]\big)M(\mathcal{C}_2)\nonumber\\
        &=e^{\mathcal{C}_2t}(1-\pr (H_t=t))+u\big((0,t]\big)M(\mathcal{C}_2).\label{1949b}
    \end{align}
    Thus, by collecting the bounds \eqref{1703} and \eqref{1949b} we get
    \begin{align}\label{1755b}
        \E[e^{\mathcal{C}_2D_t}]\leq e^{\mathcal{C}_2 t} ( e^{\mathcal{C}_2t}+ M(\mathcal{C}_2)u\big((0,t]\big) ),\quad t>1.
    \end{align}

    By combining bounds in \eqref{1755a} and \eqref{1755b}, we obtain the claim of the proof.
\end{proof}

\begin{remark}
    The Algorithm \ref{alg:5} can be conducted with any grid, not just with the one with the points $\{0,h,2h,3h,\dots\}\cup\{T_{t_1},\dots,T_{t_n}\}$, as long as the maximum step size is bounded by $h$, and the corresponding bounds on the strong error of approximation of $Y_t$, proved in Theorems \ref{strerror1} and \ref{thm:1714} remain the same. This holds since the strong error of Euler-Maruyama is uniformly bounded among all grids with the maximum step $h$, proved in Lemma \ref{constants}.
    
\end{remark}

\begin{proposition}\label{complex_EM}
    Assume \ref{A} and let $\mathcal{K}^{\gamma}$ and $\mathcal{K}^{\Gamma}$ denote the expected running times of the algorithms used in Algorithm \ref{alg:2} (lines 4 and 11) and Algorithm \ref{alg:3} (line 6), respectively. For a desired tolerance $\varepsilon >0$ on the strong error of the approximation of $Y_s$ by $Y^h_s$ on $s\in\{t_0,t_1,\dots,t_n\}$, with $h=\max\{t_k-t_{k-1}:k=1,\dots,n\}$, the expected complexity is 
    \begin{align}
        (\mathcal{K}^{\gamma}(h)+8)n+ \E\, T_{t_n}\l \frac{\mathcal{C}_1 \E[e^{ \mathcal{C}_2 T_{t_n}}] }{\varepsilon} \r^{\max(1,\frac{1}{2\gamma})}+1,\label{eq:1240-a}
    \end{align}
    if $T_t=H_t$, and
    \begin{align}
        (\mathcal{K}^{\Gamma}(h)+5)n+ \E\, T_{t_n}\l \frac{\mathcal{C}_1 \E[e^{\mathcal{C}_2 T_{t_n} }] }{\varepsilon} \r^{\max(1,\frac{1}{2\gamma})}+1,\label{eq:1240-b}
    \end{align}
    if $T_t=L_t$ or $T_t=D_t$ (with the additional assumption $M(\mathcal{C}_2)<\infty$ defined in \eqref{lunchtime} in this case).

    If Algorithm \ref{alg:simplif_2} is used in Algorithm \ref{alg:2} (lines 4 and 11) and if Algorithm \ref{alg:simplif} is used in Algorithm \ref{alg:3} (line 6), the complexity of the described approximation of $Y_s$ by $Y^h_s$ (with the desired tolerance on the strong error) has finite moments of all orders.
\end{proposition}

\begin{proof}
    The Algorithm \ref{alg:5} first samples the time-change $T_{t_i}$, $i=1,\dots,n$, which has the expected cost of $(\mathcal{K}^{\gamma}(h)+7)n$ in the case $T_t=H_t$, and $(\mathcal{K}^{\Gamma}(h)+4)n$ in the case $T_t=L_t$ or $T_t=D_t$, see Proposition \ref{complex_alg3}.
    
    Then comes the sampling of the Euler-Maruyama scheme on the grid $\{0,h,2h,3h,\dots\}\cup\{T_{t_1},\dots,T_{t_n}\}$. If $\varepsilon>0$ is the desired error tolerance on the paths of $X_{T_t}^h$, then $h$ has to satisfy
    \begin{align}\label{1441}
        \mathcal{C}_1 \E[e^{\mathcal{C}_2 T_t }] h^{\min(2\gamma,1)} \leq \varepsilon,
    \end{align}
    see Theorem \ref{strerror1} and \eqref{eq:1228}, and Theorem \ref{thm:1714} and \eqref{eq:1227-a}.
    It is clear that by choosing $h$ small enough, this can be achieved for any $\varepsilon>0$. For $h$ such that the equality in \eqref{1441} holds, the number of grid points for the Euler-Maruyama scheme is $N=\lceil T_t/h\rceil+n$, and they all have the cost of one since they only consist of elementary operations.  By summing up those costs, and by noting that 
    \begin{align*}
        N=\lceil T_t/h\rceil{+n}\leq T_t/h+1+{n}= T_t\l\frac{\mathcal{C}_1 \E[e^{ \mathcal{C}_2 T_t}] }{\varepsilon}\r^{\max(1,\frac{1}{2\gamma})}+1{+n}
    \end{align*}
    the statement on the complexity follows.

    Since $\E[e^{\mathcal{C}_2 T_t}]<\infty$, in all the cases, the last claim follows from Corollaries \ref{cor:complexDassios} and \ref{cor:complexCLM}.
\end{proof}

\subsection{Monte Carlo approximations of time-changed diffusion processes}\label{sec:MCATTCD}
We consider now the Monte Carlo estimator for $\E[u(Y_{t_1}, \cdots, Y_{t_n} )]$ where $Y_t=X_{T_t}$ is a time-changed diffusion process as above and $u$ is a suitable locally bounded functional. The estimator is given by
\begin{align}\label{MCEM}
   q_N^h(t_1, \cdots, t_n)\coloneqq  \frac{1}{N} \sum_{k=1}^N u \Big( X_{T_{t_1}^k}^{h,k}, \cdots, X_{T_{t_n}^k}^{h,k} \Big),
\end{align}
{where  $\Big( X_{T_{t_1}^k}^{h,k}, \cdots, X_{T_{t_n}^k}^{h,k} \Big)$, $k=1,\dots,N,$ are i.i.d. copies of the Euler-Maruyama approximation of the time-changed diffusion $Y_t$, i.e. of the vector given in Algorithm \ref{alg:5}. The approximation \eqref{MCEM}} is the counterpart of \eqref{monteest} in the case when an exact simulation of the Markov process is not at hand. Here we evaluate the error in the approximation.

\begin{theorem}\label{thm:l2error}
    Assume \ref{A} and let $u: \R^{d\times n}\to\R$ be such that $|u(x_1,\cdots,x_n)-u(y_1,\cdots,y_n)|\leq C_u\max_i|x_i-y_i|$, for a positive constant $C_u$ and $x_i,y_i\in \R^d$, $i=1,\dots,n$. Then
    \begin{align}\label{1222}
        &\E\ls q_N^h(t_1, \cdots, t_n) - \E[u(X_{T_{t_1}}, \cdots, X_{T_{t_n}})] \rs^2 \leq \frac{v}{N} + C_u\sqrt{\mathcal{C}_1 \E[e^{\mathcal{C}_2T_{t_n}}] h^{\min(2\gamma,1)}},
    \end{align}
    where
    \begin{align}\label{1155}
        v=\sup_{\substack{|x_i|\leq x_0 \\ \text{for any } i}} |u(x_1, \cdots, x_n)|^2 +4Cn\max(\mathcal{C}_1,c_{0,1}) \E[e^{\mathcal{C}_2 T_{t_n}}],
    \end{align}
    where $C=C(u,x_0)>0$.

    Furthermore, the expectation in \eqref{1155} can be bounded as in Theorems \ref{strerror1} and \ref{thm:1714}, where  for the case $T_t=D_t$ we additionally assume $M(\mathcal{C}_2)<\infty$ defined in \eqref{lunchtime}.
\end{theorem}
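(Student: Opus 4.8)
The plan is to split the mean--square error of the estimator \eqref{MCEM} into a statistical (Monte Carlo) part and a systematic (Euler--Maruyama discretization) part and to control each separately. Writing, for brevity, $Y_i\coloneqq X_{T_{t_i}}$ and $Y_i^h\coloneqq X_{T_{t_i}}^h$, the $N$ samples $\l X_{T_{t_1}^k}^{h,k},\dots,X_{T_{t_n}^k}^{h,k}\r$, $k=1,\dots,N$, are i.i.d., so the fluctuation $q_N^h-\mathds{E}u(Y_1^h,\dots,Y_n^h)$ has mean zero while $\mathds{E}u(Y_1^h,\dots,Y_n^h)-\mathds{E}u(Y_1,\dots,Y_n)$ is deterministic; squaring the decomposition and taking expectations kills the cross term and gives the bias--variance identity
\[
\mathds{E}\l q_N^h(t_1,\dots,t_n)-\mathds{E}u(Y_1,\dots,Y_n)\r^2 \, = \, \frac1N\,\mathrm{Var}\big(u(Y_1^h,\dots,Y_n^h)\big) + \big(\mathds{E}u(Y_1^h,\dots,Y_n^h)-\mathds{E}u(Y_1,\dots,Y_n)\big)^2 .
\]
It then suffices to bound the variance by $v$ and the bias contribution by the second term on the right of \eqref{1222}.

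For the variance I would use $\mathrm{Var}(u(Y^h))\le \mathds{E}\big[u(Y^h)^2\big]$ and estimate the latter exactly as in the proof of Theorem \ref{thm1541}: split according to whether $\max_i|Y_i^h|\le x_0$ or not. On the first event the local boundedness of $u$ produces the term $\sup_{|x_i|\le x_0}|u(x_1,\dots,x_n)|^2$, while on the second event the global Lipschitz hypothesis yields a bound of the form $|u(y_1,\dots,y_n)|^2\le C(u,x_0)\max_i|y_i|^2$, so one is reduced to controlling $\mathds{E}\big[\max_i|Y_i^h|^2\big]\le\sum_{i=1}^n\mathds{E}|Y_i^h|^2$. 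For each $i$ I would write $|Y_i^h|^2\le 2|Y_i^h-Y_i|^2+2|Y_i|^2$: the first term is bounded by $2\mathcal{C}_1\,\mathds{E}[e^{\mathcal{C}_2 T_{t_n}}]$ via Lemma \ref{constants}, conditioning on the time change (which is independent of $X$ and $X^h$), using $T_{t_i}\le T_{t_n}$ and $h<1$; the second term is bounded by $c_{0,1}\,\mathds{E}[e^{\mathcal{C}_2 T_{t_n}}]$ via the standard $L^2$ moment estimate for the exact diffusion under Assumption \ref{A} (again after conditioning). Summing over $i$ and multiplying by $C=C(u,x_0)$ produces precisely the term $4Cn\max(\mathcal{C}_1,c_{0,1})\,\mathds{E}[e^{\mathcal{C}_2 T_{t_n}}]$ in \eqref{1155}.

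For the bias, the Lipschitz property of $u$ and Jensen's inequality give
\[
\big|\mathds{E}u(Y_1^h,\dots,Y_n^h)-\mathds{E}u(Y_1,\dots,Y_n)\big| \, \le \, C_u\,\mathds{E}\Big[\max_i|Y_i^h-Y_i|\Big] \, \le \, C_u\sqrt{\mathds{E}\Big[\max_i|Y_i^h-Y_i|^2\Big]},
\]
and the strong error estimates of Theorems \ref{strerror1} and \ref{thm:1714} bound $\mathds{E}\big[\max_i|Y_{t_i}^h-Y_{t_i}|^2\big]$ by $\mathcal{C}_1\,\mathds{E}[e^{\mathcal{C}_2 T_{t_n}}]h^{\min(2\gamma,1)}$, which yields the second term in \eqref{1222}. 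Collecting the two contributions gives the stated bound. Finally, $\mathds{E}[e^{\mathcal{C}_2 T_{t_n}}]<\infty$ in each of the three cases, as already used in the previous section: trivially when $T=H$, since $H_{t_n}\le t_n$; by Lemma \ref{our} when $T=L$; and by the computation carried out inside the proof of Theorem \ref{thm:1714} when $T=D$, under the additional assumption $M(\mathcal{C}_2)<\infty$.

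The main obstacle I anticipate is obtaining the $L^2$ moment bound for the time-changed discretized process $\mathds{E}|X_{T_{t_i}}^h|^2$ in the right form: one needs the Euler--Maruyama second-moment estimate to hold uniformly over all admissible grids with maximal step $\le h$ (in the spirit of Lemma \ref{constants}, and as set up in the Appendix) before integrating it against the law of the independent time change, and this is exactly where finiteness of the exponential moment $\mathds{E}[e^{\mathcal{C}_2 T_{t_n}}]$ must be invoked. The only other delicate bookkeeping is tracking the constant $C=C(u,x_0)$ coming from the ``far--field'' growth estimate for $u$, which then surfaces in \eqref{1155}; the rest is a routine combination of the results already established.
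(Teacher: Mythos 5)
Your plan is correct and essentially reproduces the paper's argument: the same bias--variance identity, the same control of the bias via the Lipschitz property of $u$, Jensen's inequality, and the strong-error estimates of Theorems \ref{strerror1} and \ref{thm:1714}, and the same reduction of the variance term to a second-moment bound for $\max_i |X^h_{T_{t_i}}|$ via the far-field growth of $u$. The only cosmetic difference is that the paper bounds $\sqrt{\mathds{E}|X^h_w|^2}$ by the Minkowski (triangle-in-$L^2$) inequality $\sqrt{\mathds{E}|X^h_w|^2}\le\sqrt{\mathds{E}|X^h_w-X_w|^2}+\sqrt{\mathds{E}|X_w|^2}$ and then squares, whereas you use the elementary inequality $|a+b|^2\le 2a^2+2b^2$ directly; both routes produce the stated factor $4\max(\mathcal{C}_1,c_{0,1})$, and both use the observation $2c_1\le\mathcal{C}_2$ so that the slower-growing exponential from Lemma \ref{difference} is dominated by $e^{\mathcal{C}_2 w}$.
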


\begin{proof}
    First note that
    \begin{align}
        \E[ u(X_{T_{t_1}}, \cdots, X_{T_{t_n}}) ]^2 < +\infty.
	\label{momsec}
    \end{align}
    
    Indeed, by argumentation as in Theorem \ref{thm1541} for \eqref{414} (since  $u \stackrel{\infty}{=} O (\max_{1\leq i\leq n} |x_i|)$ by the Lipschitz assumption on $u$) we get
    \begin{align}
        &\E[ u(X_{T_{t_1}}, \cdots, X_{T_{t_n}}) ]^2 \leq \sup_{\substack{|x_i|\leq x_0 \\ \text{for any } i}} |u(x_1, \cdots, x_n)|^2 +C\E\Big[ \max_{1\leq i\leq n} |X_{T_{t_i}}|^2 \Big] \\
        &\leq \sup_{\substack{|x_i|\leq x_0 \\ \text{for any } i}} |u(x_1, \cdots, x_n)|^2 +C\sum_{i=1}^n\int_0^\infty \E[|X_{w}|^2] \pr(T_{t_i} \in dw)\\
        &\leq \sup_{\substack{|x_i|\leq x_0 \\ \text{for any } i}} |u(x_1, \cdots, x_n)|^2 +Cnc_{0,1} \E[e^{2 c_1 T_{t_n}}],
    \end{align}
    where in the last line we used Lemma \ref{difference} and the fact that $T_t$ is non-decreasing. This proves the finiteness of the first two moments, where for the case $T_t=D_t$ we must note that $2c_1\le \mathcal{C}_2$ so $\E[e^{2c_1 T_{t_n}}]<\infty$ by the assumption $M(\mathcal{C}_2)<\infty$.
    
    Note that 
    \begin{align}
       & L_e^2 \coloneqq \E\ls q_N^h(t_1, \cdots, t_n) - \E[ u(X_{T_{t_1}}, \cdots, X_{T_{t_n}}) ] \rs^2 \notag \\
       = \,& \E\ls q_N^h(t_1, \cdots, t_n) \pm \E[ u(X_{T_{t_1}}^h, \cdots, X_{T_{t_n}}^h) ] - \E[ u(X_{T_{t_1}}, \cdots, X_{T_{t_n}}) ] \rs^2 \notag \\
       = \, & \frac{1}{N}\var[u( X_{T_{t_1}}^h, \cdots, X_{T_{t_n}}^h )] + \l \E[ u(X_{T_{t_1}}^h, \cdots, X_{T_{t_n}}^h ) - u(X_{T_{t_1}}, \cdots, X_{T_{t_n}} ) ] \r^2 \notag \\
       \leq \, & \frac{1}{N}\E[ u(X_{T_{t_1}}^h, \cdots, X_{T_{t_n}}^h )^2 ] + \l \E |u(X_{T_{t_1}}^h, \cdots, X_{T_{t_n}}^h ) - u(X_{T_{t_1}}, \cdots, X_{T_{t_n}})| \r^2.
       \label{here}
    \end{align}
    Under the assumption $|u(x_1,\cdots,x_n)-u(y_1,\cdots,y_n)|\leq C_u\max_i|x_i-y_i|$, the second term on the right-hand side can be bounded by the strong error as in Theorem \ref{strerror1}, as follows
    \begin{align}
        & \E |u(X_{T_{t_1}}^h, \cdots, X_{T_{t_n}}^h) - u(X_{T_{t_1}}, \cdots, X_{T_{t_n}})| \,\leq C_u \E\max_{s\in\{T_{t_1},\dots,T_{t_n}\}} |X_s^h -X_s| \notag \\
        & \qquad \leq  C_u \E\sup_{0\leq s\leq T_t} |X_s^h -X_s|.\label{1726}
    \end{align}
    Therefore, by using \eqref{criterion} and Theorem \ref{strerror1} or Theorem \ref{thm:1714}, the second right term in \eqref{here} is bounded by $C_u\sqrt{\mathcal{C}_1 \E[e^{\mathcal{C}_2T_{t_n}}] h^{\min(2\gamma,1)}}$. 

    Further, for $X_w^h$ we have
    \begin{align}
        \sqrt{\E|X_w^h|^2} &\leq \sqrt{\E|X_w^h-X_w|^{2}}+\sqrt{\E|X_w|^2}\\
        &\leq \sqrt{\mathcal{C}_1 e^{\mathcal{C}_2 w}}+\sqrt{c_{0,1}e^{c_1 w}} \leq 2\sqrt{\max(\mathcal{C}_1,c_{0,1})}e^{\frac{\mathcal{C}_2}{2} w},\label{51605}
    \end{align}
    where we used Lemma \ref{difference} and Lemma \ref{constants}, and that $2c_1\le \mathcal{C}_2$. 
    Hence, by the same reasoning as in the beginning of the proof, we obtain
    \begin{align}
        \E[u(X_{T_{t_1}}^h, \cdots, X_{T_{t_n}}^h )^2] &\leq \sup_{\substack{|x_i|\leq x_0 \\ \text{for any } i}} |u(x_1, \cdots, x_n)|^2 +4Cn\max(\mathcal{C}_1,c_{0,1})\E[e^{\mathcal{C}_2 T_{t_n}}].\label{boundvar}
    \end{align}
    By collecting these two bounds for terms in \eqref{here}, we get the claim.
\end{proof}

\begin{proposition}\label{complex_MCEM}
     Assume \ref{A} and fix the tolerance error $\varepsilon>0$.  {Let $\mathcal{K}^{\gamma}$ and $\mathcal{K}^{\Gamma}$ denote the expected running times of the algorithms used inside of Algorithm \ref{alg:2} (lines 4 and 11) and Algorithm \ref{alg:3} (line 6), respectively}. For the computation of the estimator \eqref{MCEM} with $L^2$-error smaller than $\varepsilon$, the {expected} complexity is bounded by
    \begin{align}\label{eq1306-a}
        &\frac{2v}{\varepsilon} \l (\mathcal{K}^{\gamma}(1/2)+8)n+ {\E T_{t_n}}\left(\frac{4C_u^2\mathcal{C}_1\E[e^{\mathcal{C}_2 T_{{t_n}}}]}{\varepsilon^2}\right)^{\max(1,1/2\gamma)}+1\r,
    \end{align}
    if $T_t=H_t$, and
    \begin{align}\label{eq1306-b}
        &\frac{2v}{\varepsilon} \l (\mathcal{K}^{\Gamma}(1/2)+8)n+ {\E T_{t_n}}\left(\frac{4C_u^2\mathcal{C}_1\E[e^{\mathcal{C}_2 T_{{t_n}}}]}{\varepsilon^2}\right)^{\max(1,1/2\gamma)}+1\r,
    \end{align}
    if $T_t=L_t$ or $T_t=D_t$ (with the additional assumption $M(\mathcal{C}_2)<\infty$ in this case, see \eqref{lunchtime}).
    The constants  $v$ and $C_u$ come from Theorem \ref{thm:l2error}, $\mathcal{C}_1$ and $\mathcal{C}_2$ from Lemma \ref{constants}.
    
    {If Algorithm \ref{alg:simplif_2} is used in Algorithm \ref{alg:2} (lines 4 and 11) and if Algorithm \ref{alg:simplif} is used in Algorithm \ref{alg:3} (line 6), the complexity of the calculation of the desired approximation as above has finite moments of all orders.}
\end{proposition}

\begin{proof}
    Fix $\varepsilon>0$. Choose $N\in\N$ and $h\in(0,1/2)$ such that $v/N<\varepsilon/2$ and $C_u\sqrt{\mathcal{C}_1\E[e^{\mathcal{C}_2T_t}] h^{\min(2\gamma,1)}}\leq \varepsilon/2$. In this way the bound  \eqref{1222} is smaller than $\varepsilon$. Therefore, $N$ must be bigger than $2v/\varepsilon$, and $h$ must be smaller than
    \begin{align}
        \left(\frac{\varepsilon^2}{4C_u^2\mathcal{C}_1\E[e^{\mathcal{C}_2 T_t}]}\right)^{\max(1,1/2\gamma)}.
    \end{align}
    
    Now the complexity of the estimator \eqref{MCEM} is given as $N$
    multiplied by the Euler-Maruyama complexity which is given in Proposition \ref{complex_EM} by using the time step $h$.
    
    The aforementioned substantiates the proposition claim.
\end{proof}

\subsubsection{Numerical examples}
In the following examples we consider diffusions for which the explicit solution is known, approximate it using Algorithm \ref{alg:5}, and compute the error
\begin{align}
    \max_{s\in \{ t_1,\cdots,t_n \}} |X_{{T}_s} - X_{{T}_s}^h|^2\label{31128}
\end{align}
for 500 independent trajectories. The results are shown in Figure \ref{fig:Error}.

\begin{example}
    We consider the time-changed (with inverse subordinator) Ornstein-Uhlenbeck process,
    \begin{align*}
        dY_t = \theta(\mu-Y_t) dL_t + \sigma dW_{L_t},\quad t\geq0,
    \end{align*}
    where $\theta$, $\sigma>0$ and $Y_0$, $\mu\in\R$, which is a subdiffusion and a special case of a fractional Pearson diffusion. For further details about these subdiffusions see \cite{Ascione2021, Leonenko2013}. In particular, let $L_t$ be the inverse of an $\alpha$-stable subordinator, $\alpha=0.8$, $t\in[0,0.1]$, $Y_0=0$, $\theta=\sigma=1/2$, and $\mu=1/4$, $K=5/8$. Fix the tolerance error $\varepsilon = 10^{-1}$ and use formula \eqref{1441} to get $h\leq 1.9\times10^{-4}$. In the bound \eqref{momexpl}, we have chosen $\phi(\mathcal{C}_3) - \mathcal{C}_2 = 6$. Figure \ref{fig:ErrorL} shows independent samples of the error \eqref{31128} for the desired tolerance error. These agree with the findings of Theorem \ref{strerror1}.
\end{example}

\begin{example}
    The process $X_t = X_0 (1+t) e^{\theta W_t - \theta^2 t/2}$, $t\geq0$, with $X_0,\,\theta\in\R\symbol{92}\{0\}$ solves the stochastic differential equation
    \begin{align*}
        dX_t = X_t(1+t)^{-1} dt + \theta X_t dW_t,
    \end{align*}
    see \cite[Chapter 4.4]{kloeden1992} for details. We time-change this diffusion with the undershooting process of an $\alpha$-stable subordinator, $\alpha=0.8$, in $t\in[0,0.1]$, and consider $X_0=1$ and $\theta=-1/5$. Further, we may choose $K=1+|\theta|$. Now we fix the tolerance error $\varepsilon = 10^{-1}$ and compute that $h\leq 8.4\times10^{-9}$  by using \eqref{1441}. By executing Algorithm \ref{alg:5}, the error \eqref{31128} is calculated. The results are shown in Figure \ref{fig:ErrorH}, which corroborate the findings of Theorem \ref{strerror1}.
\end{example}

\begin{figure}[ht]
    \centering
    \begin{subfigure}{.485\textwidth}
        \centering
        \includegraphics[width=.975\linewidth]{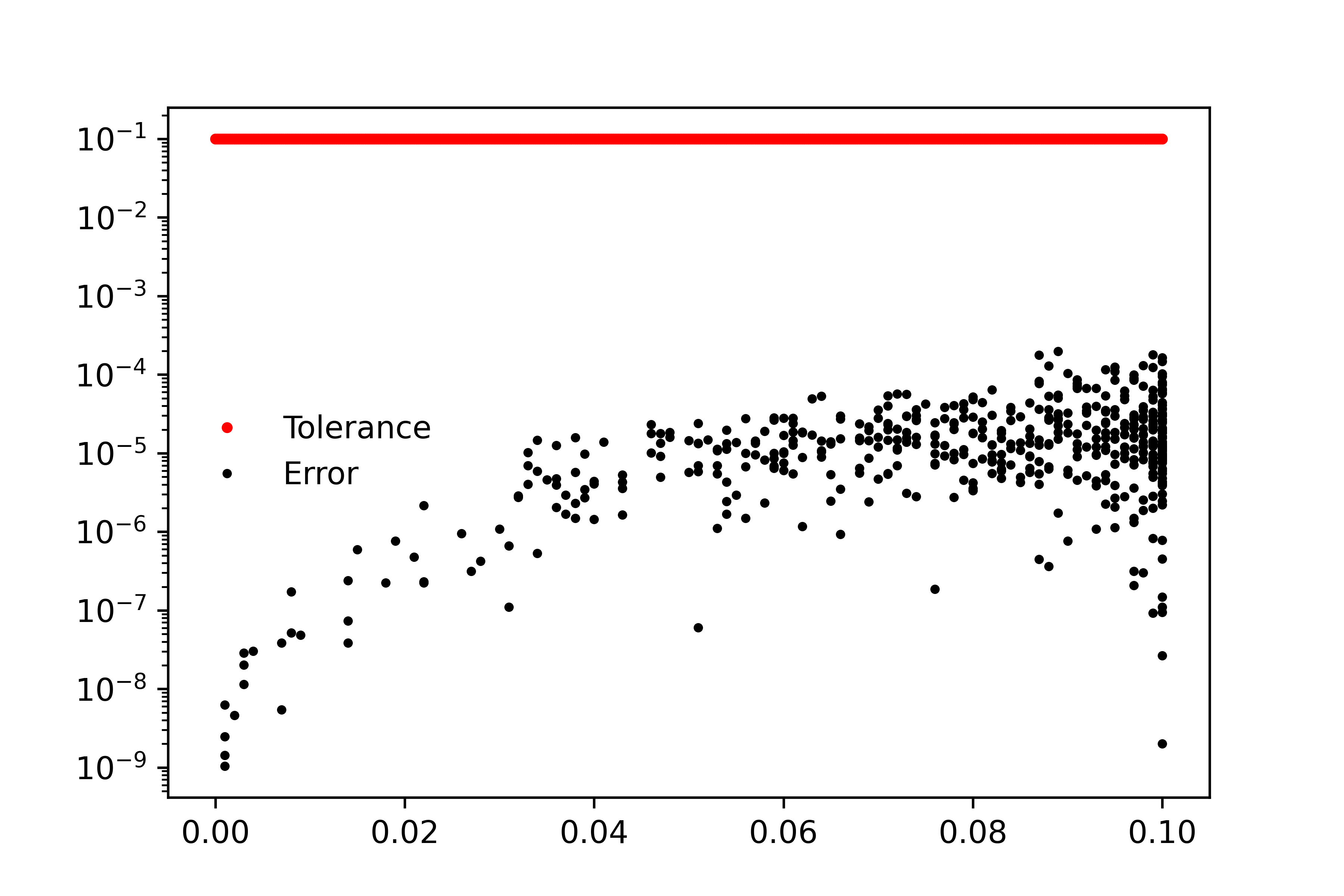}
        \caption{Ornstein-Uhlenbeck, time-changed with the inverse of an $\alpha$-stable subordinator.}
        \label{fig:ErrorL}
    \end{subfigure}\hfill%
    \begin{subfigure}{.485\textwidth}
        \centering
        \includegraphics[width=.975\linewidth]{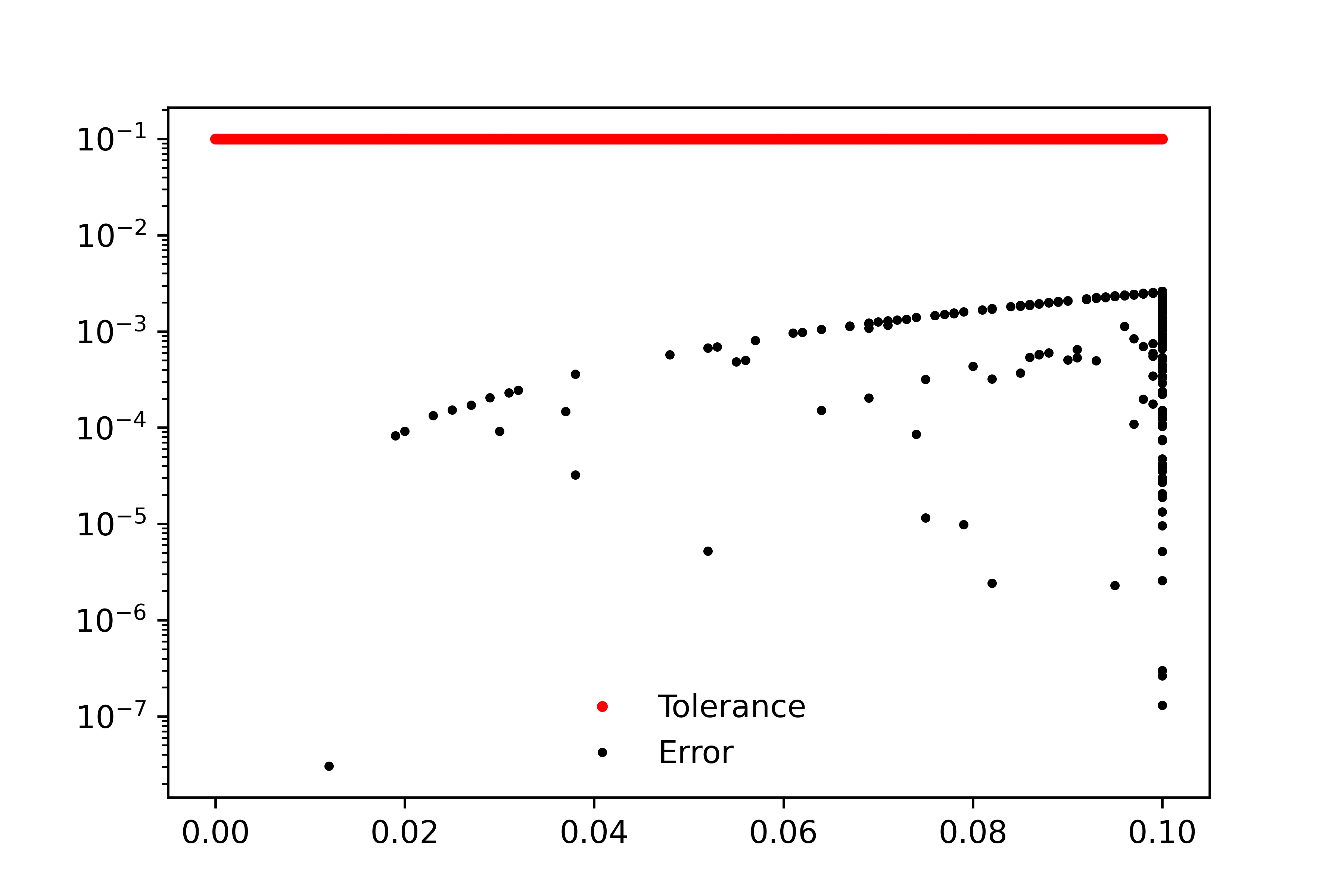}
        \caption{Diffusion, time-changed with the undershooting process of an $\alpha$-stable inverse subordinator.}
        \label{fig:ErrorH}
        \end{subfigure}
    \caption{500 independent error \eqref{31128} samples for two different diffusions, with a desired tolerance error $\varepsilon=10^{-1}$. Each point represents the maximum error obtained by a sample and its corresponding time point at which the error occurred.}
    \label{fig:Error}
\end{figure}

{The parameter $h$ in previous examples is computed using \eqref{1441} which holds true whenever the coefficients of the It\^o diffusion match the assumptions in \ref{A} (as it happens in the previous examples). Hence, the gap between the desired and observed errors in Figure \ref{fig:Error} appears big. By conducting again the computation in the appendix case by case, the inequality \eqref{1441} could be improved and the gap reduced.}

\subsubsection{The limit distribution}
Now we study a limit distribution of the Monte Carlo estimator $q_N^h(t_1, \cdots, t_n)$ defined in \eqref{MCEM}. { This is useful in applications, in order to study Gaussian approximations when $N \to +\infty$; however, the oscillation of $q_N^h(t_1, \cdots, t_n)$ must be around its true value and this is obtained when $h\to0$ and $N\to+\infty$ jointly, i.e., we put $h=h(N)\to0$ as $N\to+\infty$.}
Denote
\begin{align}
    Z= u(X_{T_{t_1}},\cdots,X_{T_{t_n}}) \text{ and }  Z_h^k= u(X_{T_{t_1}^k}^{h,k},\cdots,X_{T_{t_n}^k}^{h,k}),
\end{align}
where $X_{T_{t_1}^k}^{h,k},\cdots,X_{T_{t_n}^k}^{h,k}$ are independent samples of $X_{T_{t_1}}^{h},\cdots,X_{T_{t_n}}^{h}$. Define
\begin{align}
    \xi_{Nj} \coloneqq \frac{Z_h^j -\E[Z]}{\sqrt{N}}.
    \label{1455}
\end{align}
First we need to verify that for a suitable choice of $h=h(N)$ the random variables in \eqref{1455} form a null array.
According to \cite[p. 88]{Kallenberg}, a null array is a triangular array of random variables or vectors $\xi_{nj}$, $1\leq j\leq m_n$, $n\in\N$, such that the $\xi_{nj}$ are independent for each $n$ and satisfy $\xi_{nj}\to 0$, in probability, as $n\to+\infty$, uniformly in $j$.
\begin{lemma}
   Assume \ref{A} and $|u(x_1,\cdots,x_n)-u(y_1,\cdots,y_n)|\leq C_u\max_i|x_i-y_i|$, for $C_u>0$ and $x_i,y_i\in\R^d$. Set $h^{\min(2\gamma,1)}=N^{\delta}$, with $\delta<1$, and if $T_t=D_t$, additionally assume $M(\mathcal{C}_2)<\infty$ defined in \eqref{lunchtime}. Then the random variables \eqref{1455} form a null array. 
\end{lemma}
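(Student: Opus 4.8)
The plan is to reduce the null-array property to a one-line second-moment estimate. For each fixed $N$, the variables $\xi_{N1},\dots,\xi_{NN}$ are built from the i.i.d. copies $X_{T_{t_1}^k}^{h,k},\dots,X_{T_{t_n}^k}^{h,k}$, hence they are independent and identically distributed; so the row-wise independence required by \cite[p.~88]{Kallenberg} is automatic, and the uniformity in $j$ of the convergence to zero reduces to checking that $\xi_{N1}\to 0$ in probability as $N\to\infty$. By Chebyshev's inequality, for every $\varepsilon>0$,
\[
\max_{1\le j\le N}\mathds{P}\bigl(|\xi_{Nj}|>\varepsilon\bigr)=\mathds{P}\bigl(|\xi_{N1}|>\varepsilon\bigr)\le \frac{\mathds{E}\bigl[(Z_h-\mathds{E} Z)^2\bigr]}{\varepsilon^2 N}\le\frac{2\,\mathds{E}[Z_h^2]+2\,(\mathds{E} Z)^2}{\varepsilon^2 N},
\]
so it suffices to bound $\mathds{E}[Z_h^2]$ uniformly in $h\in(0,1)$ and to know that $\mathds{E}[Z^2]<\infty$.

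For the second fact I would invoke \eqref{momsec} from the proof of Theorem~\ref{thm:l2error}, recalling that for $T_t=D_t$ this uses the standing assumption $M(\mathcal{C}_2)<\infty$ defined in \eqref{lunchtime}. For the first I would reuse verbatim the computation leading to \eqref{boundvar}: split $u$ according to whether $\max_i|X^h_{T_{t_i}}|\le x_0$ or not, use the local boundedness of $u$ on the first event and its Lipschitz (hence linear) growth on the second, bound $\mathds{E}|X^h_w|^2$ by $4\max(\mathcal{C}_1,c_{0,1})e^{\mathcal{C}_2 w}$ through \eqref{51605} (which rests on Lemma~\ref{difference} and Lemma~\ref{constants} and is valid for every $h\in(0,1)$), integrate against the law of $T_{t_i}$ and use the monotonicity of $T$. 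This yields $\mathds{E}[Z_h^2]\le v$ with $v$ the constant in \eqref{1155}, which depends on neither $h$ nor $N$. Under the prescribed scaling $h^{\min(2\gamma,1)}=N^\delta$ we have $h=h(N)\in(0,1)$ for all $N$ large (so the above estimates apply) and $h\to0$ as $N\to\infty$; note, however, that the argument needs only $h\in(0,1)$, not $h\to 0$.

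Plugging these bounds into the displayed inequality gives
\[
\max_{1\le j\le N}\mathds{P}\bigl(|\xi_{Nj}|>\varepsilon\bigr)\le\frac{2v+2\,\mathds{E}[Z^2]}{\varepsilon^2 N}\xrightarrow[N\to\infty]{}0 ,
\]
which is precisely the definition of a null array. The only mildly delicate point, and the place I would be most careful, is to make sure that all the moment estimates borrowed from Lemma~\ref{constants} and from the proof of Theorem~\ref{thm:l2error} — in particular \eqref{51605} and the bound $\mathds{E}[Z_h^2]\le v$ — are genuinely \emph{uniform} in $h\in(0,1)$, so that the right-hand side above does not secretly depend on $N$ through $h=h(N)$; this uniformity is already built into those statements, since they were each established for an arbitrary $h\in(0,1)$.
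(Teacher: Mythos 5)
Your proof is correct, and it takes a slightly different route from the paper's. The paper applies the Markov inequality to the first moment of $|\xi_{Nj}|$, splits $|Z_h^j-\mathds{E}[Z]|\le|Z_h^j-\mathds{E}[Z_h^j]|+|Z_h^j-Z|$, and bounds the two pieces by $\sqrt{\mathds{V}\mathrm{ar}(Z_h^j)}\le\sqrt{v}$ (via \eqref{boundvar}) and by the $L^1$ approximation error $\mathds{E}[|Z_h^j-Z|]\le C_u\sqrt{\mathcal{C}_1\mathds{E}[e^{\mathcal{C}_2T_t}]h^{\min(2\gamma,1)}}$ (via \eqref{1726} and \eqref{criterion}); the resulting rate is $1/\sqrt{N}$ and the second piece decays only because $h=h(N)\to 0$. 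You instead go straight to Chebyshev's inequality with second moments, expand $\mathds{E}[(Z_h-\mathds{E}Z)^2]\le 2\mathds{E}[Z_h^2]+2(\mathds{E}Z)^2$, and use exactly the same ingredients — \eqref{boundvar} giving $\mathds{E}[Z_h^2]\le v$ uniformly in $h\in(0,1)$, and \eqref{momsec} giving $\mathds{E}[Z^2]<\infty$ — to land at a $1/N$ bound that never sees the approximation error $\mathds{E}[|Z_h-Z|]$. This is a genuine (if modest) simplification: your bound is uniform in $h\in(0,1)$, so the null-array property holds irrespective of the precise dependence of $h$ on $N$, whereas the paper's chain formally relies on the scaling $h^{\min(2\gamma,1)}=N^\delta$, $\delta<1$, to kill the second term. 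You also correctly observe the implicit constraint: for the borrowed moment estimates (Lemma \ref{constants} in particular) to apply one in fact needs $h\in(0,1)$, which forces $\delta<0$ once $N>1$; your proof makes this explicit, which is a small but welcome clarification.
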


\begin{proof}
    Using Markov, triangular and Cauchy-Schwartz inequalities, we have
    \begin{align}
        \pr(|\xi_{Nj}|>\epsilon) &= \pr(|Z_h^j -\E[Z]|>\sqrt{N}\epsilon) \leq \frac{\E[|Z_h^j -\E[Z]|]}{\sqrt{N}\epsilon} \notag\\
        &\leq \frac{\E[|Z_h^j -\E[Z_h^j]|] + \E[|Z_h^j -Z|]}{\sqrt{N}\epsilon} \notag\\
        &\leq \frac{\sqrt{\var[Z_h^j]} + \E[|Z_h^j -Z|]}{\sqrt{N}\epsilon}.
    \end{align}
    Note that $\sqrt{\var[Z_h^j]}$ is bounded by using \eqref{boundvar} while $E[|Z_h^j -Z|]$ can be bounded as in \eqref{1726}. Therefore,
    \begin{align}
        \pr(|\xi_{Nj}|>\epsilon) \leq \frac{\sqrt{v} + C_u\sqrt{\mathcal{C}_1 \E[e^{\mathcal{C}_2T_t}] h^{\min(2\gamma,1)}}}{\sqrt{N}\epsilon} \stackrel{N\to\infty}{\longrightarrow} 0,
    \end{align}
    where $v$ is defined in \eqref{1155}.
\end{proof}

Now we can prove the central limit theorem.

\begin{theorem}\label{thm:biased}
    Assume \ref{A} and $|u(x_1,\cdots,x_n)-u(y_1,\cdots,y_n)|\leq C_u\max_i|x_i-y_i|$, for $C_u>0$ and $x_i,y_i\in\R^d$. Set $h=N^\delta$ for $\delta< -1$.  Let 
    \begin{align}
        S_N \coloneqq \frac{\sqrt{N} (q_N^h(t_1, \cdots, t_n)-q(t_1, \cdots, t_n))}{\sigma(t_1, \cdots, t_n , x)},
    \end{align}
    where $\sigma(t_1, \cdots, t_n)^2=\E[(Z-\E[Z])^{2}] $.  If $T_t$ is chosen to be the overshooting, additionally assume $M(\mathcal{C}_2)<\infty$ defined in \eqref{lunchtime}. The for any $\psi\in C_b(\R^d)$ it is true that
    \begin{equation*}
        \E[\psi(S_N)] \to \int_\R \psi(w) \frac{e^{-\frac{w^2}{2} }}{\sqrt{2 \pi}} \, dw, \quad\text{as}\quad N\to+\infty.
    \end{equation*}
\end{theorem}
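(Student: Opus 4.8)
The plan is to reduce the statement to the classical central limit theorem by coupling the biased estimator with an exact one on a single probability space and showing that the discrepancy is asymptotically negligible. First I would realize, exactly as in Algorithm \ref{alg:5}, the samples $\bigl(X^{h,k}_{T^k_{t_1}},\cdots,X^{h,k}_{T^k_{t_n}}\bigr)$ together with $\bigl(X_{T^k_{t_1}},\cdots,X_{T^k_{t_n}}\bigr)$ on a common space, using for each $k$ the \emph{same} sampled time-change $(T^k_{t_1},\cdots,T^k_{t_n})$ and the \emph{same} Wiener path to drive both the Euler--Maruyama recursion \eqref{EM} and the exact diffusion $X$. Writing $Z^k$ and $Z^k_h$ for the corresponding functionals, the pairs $(Z^k,Z^k_h)$ are i.i.d.\ in $k$ for fixed $N$ (hence fixed $h=h(N)$), with $Z^k\stackrel{d}{=}Z$ and $Z^k_h$ distributed as the summands of \eqref{MCEM}. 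By the argument leading to the finiteness of the second moment in the proof of Theorem \ref{thm:l2error} one has $Z\in L^2$ (for $T_t=D_t$ this uses the standing assumption $M(\mathcal{C}_2)<\infty$ together with $2c_1\le\mathcal{C}_2$), so $q(t_1,\cdots,t_n)=\mathds{E}Z$ and $\sigma(t_1,\cdots,t_n)^2=\mathds{E}[(Z-\mathds{E}Z)^2]\in(0,\infty)$, the non-degeneracy being assumed implicitly.

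Next I would split, with this coupling,
\begin{align*}
	\sqrt N\bigl(q_N^h(t_1,\cdots,t_n)-q(t_1,\cdots,t_n)\bigr)=\frac{1}{\sqrt N}\sum_{k=1}^N\bigl(Z^k-\mathds{E}Z\bigr)+\frac{1}{\sqrt N}\sum_{k=1}^N\bigl(Z^k_h-Z^k\bigr)=:A_N+B_N.
\end{align*}
Since $Z^1-\mathds{E}Z,Z^2-\mathds{E}Z,\dots$ are i.i.d., centered and square-integrable with variance $\sigma(t_1,\cdots,t_n)^2$, the classical central limit theorem gives $A_N/\sigma(t_1,\cdots,t_n)\xrightarrow{\text{d}}N(0,1)$. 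By Slutsky's theorem it then suffices to show $B_N\to 0$ in probability, for then $S_N=(A_N+B_N)/\sigma(t_1,\cdots,t_n)\xrightarrow{\text{d}}N(0,1)$, which for $\psi\in C_b(\mathbb{R}^d)$ is precisely the stated convergence $\mathds{E}[\psi(S_N)]\to\int_{\mathbb{R}}\psi(w)\,e^{-w^2/2}/\sqrt{2\pi}\,dw$.

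To control $B_N$ I would combine the Lipschitz hypothesis on $u$, the interpolation inequality \eqref{criterion}, and the strong-error bound of Theorem \ref{strerror1} (for $T_t=L_t$ or $T_t=H_t$) or Theorem \ref{thm:1714} (for $T_t=D_t$): exploiting the coupling and $t_n\ge t_i$ for all $i$,
\begin{align*}
	\mathds{E}|B_N|\le\sqrt N\,\mathds{E}|Z_h-Z|\le\sqrt N\,C_u\,\mathds{E}\Bigl[\sup_{0\le s\le T_{t_n}}|X^h_s-X_s|\Bigr]\le\sqrt N\,C_u\sqrt{\mathcal{C}_1\,\mathds{E}\bigl[e^{\mathcal{C}_2 T_{t_n}}\bigr]}\;h^{\min(\gamma,1/2)}.
\end{align*}
In the prescribed joint regime $h=N^{\delta}$, $\delta<-1$, the prefactor $\sqrt N\,h^{\min(\gamma,1/2)}$ tends to $0$, so $B_N\to 0$ in $L^1$ and hence in probability, which closes the argument. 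The main obstacle will be exactly this rate balancing: the best available estimate for $\mathds{E}|Z_h-Z|$ decays only like $h^{\min(\gamma,1/2)}$, so $h=h(N)$ must be driven to $0$ fast enough relative to $\sqrt N$ for the bias contribution $B_N$ to be annihilated — this is the role of the hypothesis $\delta<-1$ — and the whole scheme requires the finiteness of the exponential moment $\mathds{E}[e^{\mathcal{C}_2 T_{t_n}}]$, i.e.\ the extra assumption $M(\mathcal{C}_2)<\infty$ in the overshooting case. Everything else (the coupling, the appeal to the classical CLT, and Slutsky) is routine.
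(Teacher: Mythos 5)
Your proof is correct, and it follows a genuinely different and considerably more elementary route than the paper's. The paper treats the bias-corrupted summands $\xi_{Nj}=(Z_h^j-\mathds{E}Z)/\sqrt{N}$ directly as a triangular null array and invokes Kallenberg's Theorem~5.15, which forces it to verify three separate conditions (a tail-sum condition, a truncated-mean condition, and a truncated-variance condition); this occupies about a page and involves several delicate manipulations with indicator functions and positive/negative parts. You instead couple $Z^k_h$ with the exact $Z^k$ on a common probability space by driving the Euler--Maruyama recursion and the exact SDE with the same Brownian path and the same sampled time-change (a coupling the paper itself exploits implicitly when it writes $\mathds{P}(|Z_h^j-Z|>\sqrt{N}\epsilon/2)$ in condition (1) of its proof), then split $\sqrt{N}(q^h_N-q)=A_N+B_N$ with $A_N$ handled by the classical i.i.d.\ CLT and $B_N$ killed in $L^1$ by the Lipschitz property of $u$, \eqref{criterion}, Theorem~\ref{strerror1}/\ref{thm:1714} and the rate $\sqrt{N}\,h^{\min(\gamma,1/2)}\to 0$; Slutsky then closes the argument. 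Both proofs ultimately consume exactly the same ingredients (the strong-error bound, the exponential moment of $T_{t_n}$, $Z\in L^2$, and the hypothesis $M(\mathcal{C}_2)<\infty$ in the overshoot case), but your route replaces the triangular-array machinery with the decomposition $A_N+B_N$, which is shorter, more transparent, and more likely to generalize. Two small remarks: you are right that the argument implicitly requires $\sigma(t_1,\dots,t_n)^2>0$, an assumption the paper also leaves tacit; and the rate condition "$h=N^\delta$, $\delta<-1$, forces $\sqrt{N}h^{\min(\gamma,1/2)}\to 0$" is, strictly speaking, only guaranteed when $\gamma\ge 1/2$ (for $\gamma<1/2$ one needs $\delta<-1/(2\gamma)$), but this is precisely the same imprecision present in the paper's statement and is clarified by the Remark that follows it, so it is inherited rather than introduced.
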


To prove Theorem \ref{thm:biased}, we use \cite[Theorem 5.15]{Kallenberg} which states

\begin{theorem}[Theorem 5.15 in \cite{Kallenberg}]\label{thm:kallenger}
    Let $(\xi_{nj})$, $1\leq j\leq m_n$, $n\in\N$, be a null array of random variables, and let $N(b,c)$ be a normal distribution with mean $b$ and variance $c$. Then $\sum_{j=1}^{m_n} \xi_{nj}\to N(b,c)$ if and only if these conditions hold:
    \begin{enumerate}
        \item $\sum_{j=1}^{m_n} \pr(|\xi_{nj}|>\epsilon) \to0$ for all $\epsilon>0$ as $n\to+\infty$;
        \item $\sum_{j=1}^{m_n} \E[\xi_{nj}\1_{\{ |\xi_{nj}|\leq1\}}] \to b$ as $n\to+\infty$;
        \item $\sum_{j=1}^{m_n} \var[\xi_{nj}\1_{\{ |\xi_{nj}|\leq1\}}] \to c$ as $n\to+\infty$.
    \end{enumerate}
\end{theorem}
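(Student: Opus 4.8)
This is the classical Lindeberg--Feller central limit theorem in its full null-array form, so the proof is standard probability and I only sketch the route. The standing device is truncation at level one, $\xi_{nj}'=\xi_{nj}\mathds{1}_{\{|\xi_{nj}|\le 1\}}$, together with characteristic functions; the null-array hypothesis enters through $\sup_{j\le m_n}\bigl|\EE e^{it\xi_{nj}'}-1\bigr|\to 0$ for each fixed $t\in\RR$, which is exactly what lets one replace $\log\prod_j\EE e^{it\xi_{nj}'}$ by $\sum_j\bigl(\EE e^{it\xi_{nj}'}-1\bigr)$ up to an error of order $\sup_j|\cdot|\cdot\sum_j|\cdot|$.

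For sufficiency, assume (i)--(iii). Taking $\epsilon=1$ in (i) gives $\PP\bigl(\sum_j\xi_{nj}\neq\sum_j\xi_{nj}'\bigr)\le\sum_j\PP(|\xi_{nj}|>1)\to 0$, so it suffices to treat the truncated sum. Recentre: put $\eta_{nj}:=\xi_{nj}'-\EE\xi_{nj}'$. Then $(\eta_{nj})$ is again a null array (bounded convergence, since $|\xi_{nj}'|\le 1$ forces $\sup_j|\EE\xi_{nj}'|\to 0$), one has $\sum_j\mathrm{Var}(\eta_{nj})=\sum_j\mathrm{Var}(\xi_{nj}')\to c$ by (iii), and the Lindeberg condition $\sum_j\EE\bigl[\eta_{nj}^2\mathds{1}_{\{|\eta_{nj}|>\epsilon\}}\bigr]\to 0$ follows from (i) together with the uniform smallness of $\EE\xi_{nj}'$. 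The classical Lindeberg--Feller theorem for triangular arrays then yields $\sum_j\eta_{nj}\Rightarrow N(0,c)$, and since $\sum_j\xi_{nj}=\sum_j\eta_{nj}+\sum_j\EE\xi_{nj}'+\bigl(\sum_j\xi_{nj}-\sum_j\xi_{nj}'\bigr)$ with the middle term $\to b$ by (ii) and the last $\to 0$ in probability, Slutsky gives $\sum_j\xi_{nj}\Rightarrow N(b,c)$.

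For necessity, assume $\sum_j\xi_{nj}\Rightarrow N(b,c)$. The conceptual route is to invoke the general convergence theorem for null-array row sums: such sums can only converge to infinitely divisible laws, and convergence is equivalent to convergence of the row's canonical triple --- a compensated drift $\beta_n$, a Gaussian variance $\sigma_n^2$, and the ``jump measure'' $\mu_n=\sum_j\mathcal L(\xi_{nj})$ restricted away from the origin --- to the canonical triple of the limit. Since $N(b,c)$ has vanishing Lévy measure, this forces $\mu_n(\{|x|>\epsilon\})\to 0$ for every $\epsilon>0$, which is precisely (i); matching the drift then yields (ii) and matching the Gaussian component yields (iii). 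If one prefers to avoid that machinery, one symmetrises: $\prod_j\bigl|\EE e^{it\xi_{nj}}\bigr|^2=\EE e^{itS_n^s}\to e^{-t^2c}$ where $S_n^s$ is the symmetrisation of $S_n:=\sum_j\xi_{nj}$, hence $\sum_j\bigl(1-|\EE e^{it\xi_{nj}}|^2\bigr)\to t^2c$; writing $1-|\EE e^{it\xi_{nj}}|^2=\EE(1-\cos t\,\xi_{nj}^s)$ with $\xi_{nj}^s$ a symmetrisation of $\xi_{nj}$, and using that $1-\cos y$ is comparable to $1\wedge y^2$ on bounded ranges, one extracts tail control of the symmetrised variables, and a desymmetrisation via the weak symmetrisation inequalities --- legitimate because the null-array property makes the medians of the $\xi_{nj}$ uniformly small --- transfers it to the $\xi_{nj}$, giving (i). Once (i) holds, rerunning the computation of the sufficiency part shows $\sum_j\EE\xi_{nj}'$ and $\sum_j\mathrm{Var}(\xi_{nj}')$ must converge, and uniqueness of the limiting Gaussian forces their limits to be $b$ and $c$.

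I expect the necessity direction to be the crux --- in particular the desymmetrisation step (passing from tail bounds on $\xi_{nj}^s$ back to $\xi_{nj}$, which hinges on uniform asymptotic negligibility forcing the medians to vanish) and the identification/uniqueness of the limiting canonical triple. In a self-contained write-up I would therefore lean on the general null-array convergence theorem available in the same source, of which Theorem~\ref{thm:kallenger} is precisely the degenerate-Lévy-measure (Gaussian) specialisation, so that both the necessity bookkeeping and the desymmetrisation are subsumed.
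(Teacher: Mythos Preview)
The paper does not prove this statement: it is quoted verbatim from Kallenberg's textbook as an external tool and used (without proof) in the argument for Theorem~\ref{thm:biased}. There is therefore no ``paper's own proof'' to compare against. Your sketch of the sufficiency direction via truncation, recentring, and Lindeberg--Feller is standard and correct in outline; your necessity sketch sensibly defers to the general null-array/infinitely-divisible convergence theorem, which is indeed how Kallenberg organises it. But for the purposes of this paper no proof is expected or provided --- the result is simply invoked.
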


\begin{proof}[Proof of Theorem \ref{thm:biased}]
    The proof of our theorem consists of proving conditions (1), (2), and (3) of Theorem \ref{thm:kallenger} for $\xi_{Nj}$ defined in \eqref{1455}. 

Proof of condition (1).
    Note that
    \begin{align}
        \sum_{j=1}^N \pr(|\xi_{Nj}|>\epsilon) \leq \sum_{j=1}^N \pr(|Z_h^j-Z|>\sqrt{N}\epsilon/2) + \sum_{j=1}^N \pr(|Z-\E[Z]|>\sqrt{N}\epsilon/2).
    \end{align}
    In the same spirit as \cite[Equation~3.41]{kolokoltsov2021}, the second term on the right-hand side goes to zero as $N\to+\infty$:
    \begin{align}
        \sum_{j=1}^N \pr(|Z-\E[Z]|>\sqrt{N}\epsilon/2) \, \leq \, \sum_{j=1}^N \E\left[ \frac{4|Z-\E[Z]|^2}{N \varepsilon^2} \1_{\{ |Z-\E[Z]| > \frac{1}{2} \sqrt{N} \varepsilon \}}\right].
    \end{align}
    Using the Markov inequality, \eqref{1726}, and \eqref{criterion} and Theorems \ref{strerror1} and \ref{thm:1714}, we have
    \begin{align}\label{2155}
        \sum_{j=1}^N \pr(|Z_h^j-Z|>\sqrt{N}\epsilon/2) &\leq \sum_{j=1}^N \frac{\E[|Z_h^j-Z|]}{\sqrt{N}\epsilon/2} \leq  \sqrt{N}\frac{C_u\sqrt{\mathcal{C}_1 \E[e^{\mathcal{C}_2 T_t}] h^{\min(2\gamma,1)}}}{\epsilon/2}
    \end{align}
    which tends to $0$ as $N\to+\infty$.
    
Proof of condition (2).
    In the same fashion as above we have that
    \begin{align}\label{2251}
        \sum_{j=1}^N \E[\xi_{Nj}] &\leq \frac{1}{\sqrt{N}} \sum_{j=1}^N \E[|Z_h^j - Z|] \leq \sqrt{N} C_u\sqrt{\mathcal{C}_1 \E[e^{\mathcal{C}_2 T_t}] h^{\min(2\gamma,1)}}\to0,
    \end{align}
    as $N\to+\infty$. Therefore it is enough to check that
    \begin{align}
        \sum_{j=1}^N \E[\xi_{Nj}\1_{\{ |\xi_{Nj}| > 1 \}}] \to0,\quad\text{as $N\to+\infty$},
    \end{align}
    which is, then, equivalent to proving condition (2).
    
    For a random variable $X$, we denote $X_+ =\max{(X,0)}$. Then
    \begin{align}
        \frac{1}{\sqrt{N}} \sum_{j=1}^N \E& [(Z_h^j -\E[Z])_+ \1_{\{ |\xi_{Nj}| > 1 \}}] =\sqrt{N}\E[(Z_h^1 -\E[Z] )_+ \1_{\{ |\xi_{N1}| > 1 \}}]\\
        &\leq \E[(Z_h^1 -\E[Z] )^2_+ \1_{\{ |\xi_{N1}| > 1 \}}] \\
        &\leq \E[2(Z_h^1 -Z)^2_+ \1_{\{ |\xi_{N1}| > 1 \}}] + \E[2( Z -\E[Z] )^2_+ \1_{\{ |\xi_{N1}| > 1 \}}].
        \label{1521}
    \end{align}
    Theorem \ref{thm:l2error} implies that the first term of \eqref{1521} goes to zero as $N\to+\infty$. The second term in \eqref{1521} goes to zero by the dominated convergence theorem, since $\E[Z^2]<\infty$ by the calculations from the beginning of the proof of Theorem \ref{thm:l2error}.  Hence,
    \begin{align}\label{plus}
        \frac{1}{\sqrt{N}} \sum_{j=1}^N \E[( Z_h^j -\E[Z] )_+ \1_{\{ |\xi_{Nj}| > 1 \}}] \to0,\quad\text{as $N\to+\infty$}.
    \end{align}
        Similarly, denote $X_- =\max{\{-X,0\}}$. Then, by repeating the very same steps as above we have that
    \begin{align}\label{minus}
        \frac{1}{\sqrt{N}} \sum_{j=1}^N \E[( Z_h^j -\E[Z] )_-\, \1_{\{ |\xi_{Nj}| > 1 \}}] \to0,\quad\text{as $N\to+\infty$}.
    \end{align}
    By combining~\eqref{plus} and~\eqref{minus}, condition (2) holds with $b=0$.

Proof of condition (3).
    We have that
    \begin{align}
        \sum_{j=1}^N \var[\xi_{Nj}\1_{\{ |\xi_{Nj}|\leq1 \}}]   = \frac{1}{N} \sum_{j=1}^N \left( \E[( Z_h^j-\E[Z])^2 \1_{\{ |\xi_{Nj}|\leq1 \}}]  - \l \E[( Z_h^j-\E[Z] ) \1_{\{ |\xi_{Nj}|\leq1 \}}] \r^2 \right).
    \end{align}
    The following lines prove that the second term on the right-hand side of the above equation goes to zero as $N\to+\infty$.
    \begin{align}\label{2251a}
        \l \E[( Z_h^j-\E[Z]) \1_{\{ |\xi_{Nj}|\leq1 \}}] \r^2 
        \leq 2 \l \E[(Z_h^j-Z) \1_{\{ |\xi_{Nj}|\leq1 \}}] \r^2  + 2\l \E[(Z-\E[Z]) \1_{\{ |\xi_{Nj}|\leq1 \}}] \r^2.
    \end{align}
    Note that $\E[Z-\E[Z]]=0$ so
    \begin{align}\label{2251b}
        \E[(Z-\E[Z]) \1_{\{ |\xi_{Nj}|\leq1 \}}] =-\E[(Z-\E[Z]) \1_{\{ |\xi_{Nj}|>1 \}}].
    \end{align}
    Similarly,
    \begin{align}\label{2251c}
        \E[(Z_h^j-Z) \1_{\{ |\xi_{Nj}|\leq1 \}}] = \E[(Z_h^j-Z)] - \E[(Z_h^j-Z) \1_{\{ |\xi_{Nj}|>1 \}}].
    \end{align}

    Substituting equations \eqref{2251b} and \eqref{2251c} into \eqref{2251a} and using equations \eqref{plus}, \eqref{minus}, and \eqref{2251}, shows that \eqref{2251a} goes to zero as $N\to+\infty$. Indeed, \eqref{2251b} also tends to zero by dominated convergence theorem, using \eqref{momsec}.

    {Now, we will prove}
    \begin{align}
        \frac{1}{N} \sum_{j=1}^N \E[(Z_h^j-\E[Z])^2 \1_{\{ |\xi_{Nj}|\leq1 \}}] \to \var[Z].
    \end{align}

    Use the Cauchy-Schwarz inequality to note that
    \begin{align}
        &\E[(Z_h^j-\E[Z])^2] - \E[(Z-\E[Z])^2] = \E[( Z_h^j+Z-2\E[Z] )( Z_h^j-Z )] \notag\\
        &\quad\leq \l \E[(Z_h^j+Z-2\E[Z])^2] \E[(Z_h^j-Z)^2] \r^{1/2} \notag\\
        &\quad\leq \l \E[2Z^2 + 2(Z_h^j-2\E[Z])^2] \E[(Z_h^j-Z)^2] \r^{1/2}.
    \end{align}
    Since $\E[Z^2]<+\infty$ (as proved in \eqref{momsec}) and (with the help of \eqref{2251}) $\E[(Z_h^j-Z)^2] \leq \mathcal{C}_1 \E[e^{\mathcal{C}_2 T_t}] h^{\min(2\gamma,1)}$, and furthermore we know that $\E[|Z_h^j|^2]$ have a uniform upper bound for all $N$ by equation \eqref{51605}. Hence, $\E[( Z_h^j+Z-2\E[Z] )^2]$ have a uniform upper bound for all $N$, and so
    \begin{align}
        \frac{1}{N} \sum_{j=1}^N \E[(Z_h^j-\E[Z])^2] - \E[( Z-\E[Z])^2] \to0\quad\text{as $N\to+\infty$}.
    \end{align}
    Therefore, we now only need to show
    \begin{align}
        \frac{1}{N} \sum_{j=1}^N \E[(Z_h^j-\E[Z])^2 \1_{\{ |\xi_{Nj}|>1 \}}] \to0\quad\text{as $N\to+\infty$}.
    \end{align}

    In fact, this is true because
    \begin{equation}
        \begin{split}
            \frac{1}{N} \sum_{j=1}^N \E[(Z_h^j-\E[Z])^2 \1_{\{ |\xi_{Nj}|>1 \}}] \leq \frac{1}{N} \sum_{j=1}^N &\Big( 2\E[(Z_h^j-Z)^2 \1_{\{ |\xi_{Nj}|>1 \}}] \\
            &\qquad+ 2\E[(Z-\E[Z])^2 \1_{\{ |\xi_{Nj}|>1 \}}] \Big),
        \end{split}
    \end{equation}
    which tends to zero as in \eqref{1521}.
\end{proof}

\begin{remark}
    The choice of $h^{\min(2\gamma,1)}=N^\delta$, $\delta< -1$, is not unique in the sense that the proof of the previous claim holds even if $Nh^{\min(2\gamma,1)}\to0$ as $N\to+\infty$ which can be seen by inspecting the proof.
\end{remark}

\begin{remark}\label{Biased_complex_MCEM}
    In the classical case of Monte Carlo approximations for the Euler-Maruyama scheme, one chooses $N$ and then $h$ so that $h^{\min\{2\gamma,1\}}=N^{-2}$, i.e. $\delta=-2$. In this case, the bound of the $L^2$ error of the approximations, given in Theorem \ref{thm:l2error}, is
    \begin{align}
        \frac{v}{N} + \frac{C_u\sqrt{\mathcal{C}_1 \E[e^{\mathcal{C}_2 T_t}]}}{N}.
    \end{align}
    If the tolerance for the $L^2$ error of the approximations is $\varepsilon$, then we can choose $N$ so that $N=\frac{v+C_u\sqrt{\mathcal{C}_1 \E[e^{\mathcal{C}_2 T_t}]}}{\varepsilon}$. In this case, the {expected} complexity (obtained by repeating the reasoning as in Proposition \ref{complex_MCEM}) is bounded by
    \begin{align*}
        &\frac{v+C_u\sqrt{\mathcal{C}_1 \E[e^{\mathcal{C}_2 T_t}]}}{\varepsilon} \l ({\mathcal{K}^{\gamma}(1/2)}+8)n+ {\E T_{t_n}}\l\frac{v+C_u\sqrt{\mathcal{C}_1 \E[e^{\mathcal{C}_2 T_{{t_n}}}]}}{\varepsilon}\r^{\max\{2,1/\gamma\}}+1\r,
    \end{align*}
    if $T_t=H_t$, and
    \begin{align*}
         &\frac{v+C_u\sqrt{\mathcal{C}_1 \E[e^{\mathcal{C}_2 T_t}]}}{\varepsilon} \l ({\mathcal{K}^{\Gamma}(1/2)}+5)n+ {\E T_{t_n}}\l\frac{v+C_u\sqrt{\mathcal{C}_1 \E[e^{\mathcal{C}_2 T_{{t_n}}}]}}{\varepsilon}\r^{\max\{2,1/\gamma\}}+1\r,
    \end{align*}
    if $T_t=L_t$ or $T_t=D_t$ (with the additional assumption $M(\mathcal{C}_2)<\infty$ in this case, see \eqref{lunchtime}).

    The explicit expressions for the constants in the lines above can be found in Theorem \ref{thm:l2error} and Lemma \ref{constants}. {Here we also have that the complexity has finite moments of all orders if we use Algorithm \ref{alg:simplif_2} in Algorithm \ref{alg:2} and Algorithm \ref{alg:simplif} in Algorithm \ref{alg:3}.}
\end{remark}

\section{Subdiffusion and Weak Ergodicity Breaking}
\label{sec:web}
In this section, we show how the processes studied in this paper can exhibit non-ergodic behavior, in a suitable sense. This is experimentally relevant and detectable by our algorithms. Indeed, in order to experimentally study the diffusive behavior of particles undergoing random motion, it is very common to measure the displacement traveled by a single particle in (typically disjoint) intervals and then calculate the corresponding mean square displacement. In other words, if $X_t$ is the position of the particle at time $t>0$, the quantity studied here is
\begin{align}
\frac{1}{n}\sum_{i=1}^n | X_{s_i+t}-X_{s_i} |^2,
\label{msdsper}
\end{align}
where $s_i$, $i=1, 2, \dots, n$, are the instants at which the measurement begins. Usually, in experiments one typically has that $|s_i - s_{i-1}| =t$ and the particle is followed on a time horizon $T=nt$. This occurs, for example, in biological contexts where it is easier to observe the motion of a single (chemically illuminated) particle rather than observing many different ones. The reader can consult, for example, \cite{Bronstein2009, Caspi2000,  Seisenberger2001,  SelhuberUnkel2009, TolicNorrelykke2004, Weber2010} where experiments of this type are conducted to study the (anomalous) diffusive behavior of particles.

The quantity \eqref{msdsper} represents a finite-dimensional (observable) version of the functional
\begin{align}
\delta^2(t,T) \coloneqq \frac{1}{T-t}\int_0^{T-t} |X_{s+t}-X_s|^2 ds.
\label{eq: delta}
\end{align}
The functional \eqref{eq: delta} is called the ``time averaged square displacement" and yields the corresponding ``time averaged \emph{mean} square displacement"
\begin{align}
\langle\delta^2(t,T)\rangle \coloneqq \frac{1}{T-t}\int_0^{T-t} \mathds{E}|X_{s+t} - X_s|^2 ds.
\label{eq: deltaaverage}
\end{align}
On the other hand, the classical mean square displacement, given as
\begin{align}\label{eq: MeanSquaredDisplacement}
\langle X^2_t \rangle \coloneqq \mathds{E}^x |X_t-x|^2,
\end{align}
is, instead, statistically observed just as the sample mean square
\begin{align}
\Delta_N^2(t)\coloneqq \frac{1}{N} \sum_{i=1}^N |X_t^i - x|^2,
\end{align}
where $X_t^i$, $i=1, 2, \cdots, N$, are independent observations. We suggest the instructive discussions on these definitions in \cite{Burov2011, Metzler2014}.

In the case of Brownian Motion an ergodic behavior is observed, i.e., the a.s. convergence
\begin{align}\label{66}
\lim_{T \to +\infty} \delta^2(t,T) = \mathds{E}^x  |X_t-x|^2 =t
\end{align}
is observed. To illustrate, in Figure \ref{fig:Ergodicity} we chosen $T-t$ big enough and we compare the theoretical mean square displacement of the (1-dim) Brownian motion with the empirical means, i.e. we compare the following two quantities
\begin{align}
   \frac{1}{n} \sum_{i=1}^n |B_{t+s_i}-B_{s_i}|^2 \quad \text{and}\quad  \frac{1}{N} \sum_{i=1}^N |B^i_t|^2,
\end{align}
for different values of $t>0$.

However, in general, the behavior as in \eqref{66} is not guaranteed and, furthermore, for finite $T$ one can observe fluctuations of \eqref{eq: delta}, depending on the trajectory. Therefore, in \cite{Burov2011}, the authors considered the average \eqref{eq: deltaaverage} and showed that, by choosing a CTRW $(X_t,t\ge0)$ with i.i.d. heavy-tailed waiting times, e.g., with the density
\begin{align}
f(t) \sim \frac{t^{-\alpha-1}}{\Gamma(1-\alpha)}, \qquad t \to +\infty,
\end{align}
for $\alpha \in (0,1)$, and i.i.d. jumps with finite variance (in our case, for simplicity, we take variance of one), one has
\begin{align}
\langle \delta^2(t,T) \rangle \sim {C(\alpha)}\frac{t}{T^{1-\alpha}},
\label{1111}
\end{align}
in the limit for $t\ll T\to +\infty$, i.e. for fixed $t$ and $T\to +\infty$. Here, $C(\alpha)$ is a positive constant and $f(t)\sim g(t)$ as $t\to +\infty$ means $\lim_{t\to+\infty}f(t)/g(t)=1$. It is not hard to see, see \cite{Meerschaert2004}, that the re-scaled CTRW $c^{-1/2} X_{tc^{1/\alpha}}$ converges, as $c \to +\infty$, weakly in the Skorohod $J$ topology, to the time-changed Brownian motion $B_{L_t}$ where $L_t$ is the inverse of an (independent) $\alpha-$stable subordinator. The theoretical mean square displacement is known explicitly by the easy computation
\begin{align}
    \mathds{E}^{(x,0)} |B_{L_t}-x|^2 = \mathds{E}^{(x,0)}\mathds{E}^{(x,0)}[|B_{L_t}-x|^2 |L_t ] \, = \, \mathds{E}^{(x,0)}L_t = \frac{t^\alpha}{\Gamma(1+\alpha)}.
\end{align}
A natural interesting question therefore is the behavior of \eqref{eq: delta} and \eqref{eq: deltaaverage}, in the limit $t\ll T\to +\infty$,  for the time-changed processes considered in these papers, that arise as scaling limits of CTRWs. Figure \ref{fig:ErgodicityBreaking} shows Weak Ergodicity Breaking for the time-changed Brownian motion $B_{L_t}$. A thorough analytical study of functionals of the form \eqref{eq: deltaaverage} for our time-changed processes should be performed, but this is not an easy task and requires a dedicated effort, which is out of the scope of the current paper.

Now we show how one can use the tools developed in this paper in this context. In particular, we illustrate a possible procedure to study \eqref{eq: deltaaverage} in the case of the time-changed Brownian motion $B_{L_t}$, and compare results with the CTRW results of \cite{Burov2011}. Heuristically, we expect to observe that same behavior \eqref{1111}. Moreover, this method can be immediately generalized to other time-changed Markov processes for further use in this kind of applications. We remark that, for moving particles, CTRWs are meant as jumping approximations of a random motion, and our algorithms permit us to observe the behavior explicitly on the `true' limit process.

In order to find information on the functional \eqref{eq: deltaaverage} one can study its statistical counterpart
\begin{align}\label{functionalaverage}
    \frac{1}{N} \sum_{k=1}^N u (M_{L_{t_1}}^k, \cdots, M_{L_{t_n}}^k), \quad\text{and}\quad u(x_1, \cdots, x_n) \coloneqq \frac{1}{n}\sum_{i=1}^n |x_{i+1}-x_i|^2,
\end{align}
for $|t_{i+1}-t_i| = t$ and $T=nt$ by producing exact samples of the (finite-dimensional) trajectories of the time-changed process. Our Theorem \ref{thm1541} and Remark \ref{complex_MC} apply to the observable quantity \eqref{functionalaverage} (which is that used in experiments) and thus we can determine the confidence interval for the r.v.
\begin{align}\label{613}
    \frac{1}{n} \sum_{i=1}^n |M_{L_{t_{i+1}}}-M_{L_{t_i}}|^2
\end{align}
for different values of $n$, or in other words for  different values of $t=T/n$, as in Figure \ref{fig:Mdelta}, computed for $N=10^4$.

It turns out that the behavior of \eqref{613}, for $t=T/n$ varying, is linear with $t$, see Figure \ref{fig:Mdelta}. Therefore, it seems that the anomalous behavior is lost as we observe linear dependence on $t$. However, the dependence on $t$ is of the form $C(\alpha)t/T^{1-\alpha}$ as Figure \ref{fig:Mdelta} shows. This means that, in order to see the anomalous diffusive behavior, one has to look at the dependence on the time horizon $T$. The heuristic explanation of this phenomenon is as follows. The anomalous behavior is, due to the trapping effect, induced by the intervals of constancy of the inverse stable subordinator: as the time horizon increases, because of the heavy tail of the L\'evy measure (a power law with infinite expectation), the probability to find extremely long intervals (trappings) increases, and thus the slope of the line $t/T^{1-\alpha}$ decreases with $T$, faster for small values of $\alpha$. Nevertheless, for fixed time horizon, the dependence on $t$ is linear and the anomalous behavior is not visible.

\begin{figure}[ht]
    \centering
    \subcaptionbox{Ergodic behavior of Brownian motion $X_t$\label{fig:Ergodicity}}[.32\linewidth]{
        \includegraphics[width=\linewidth]{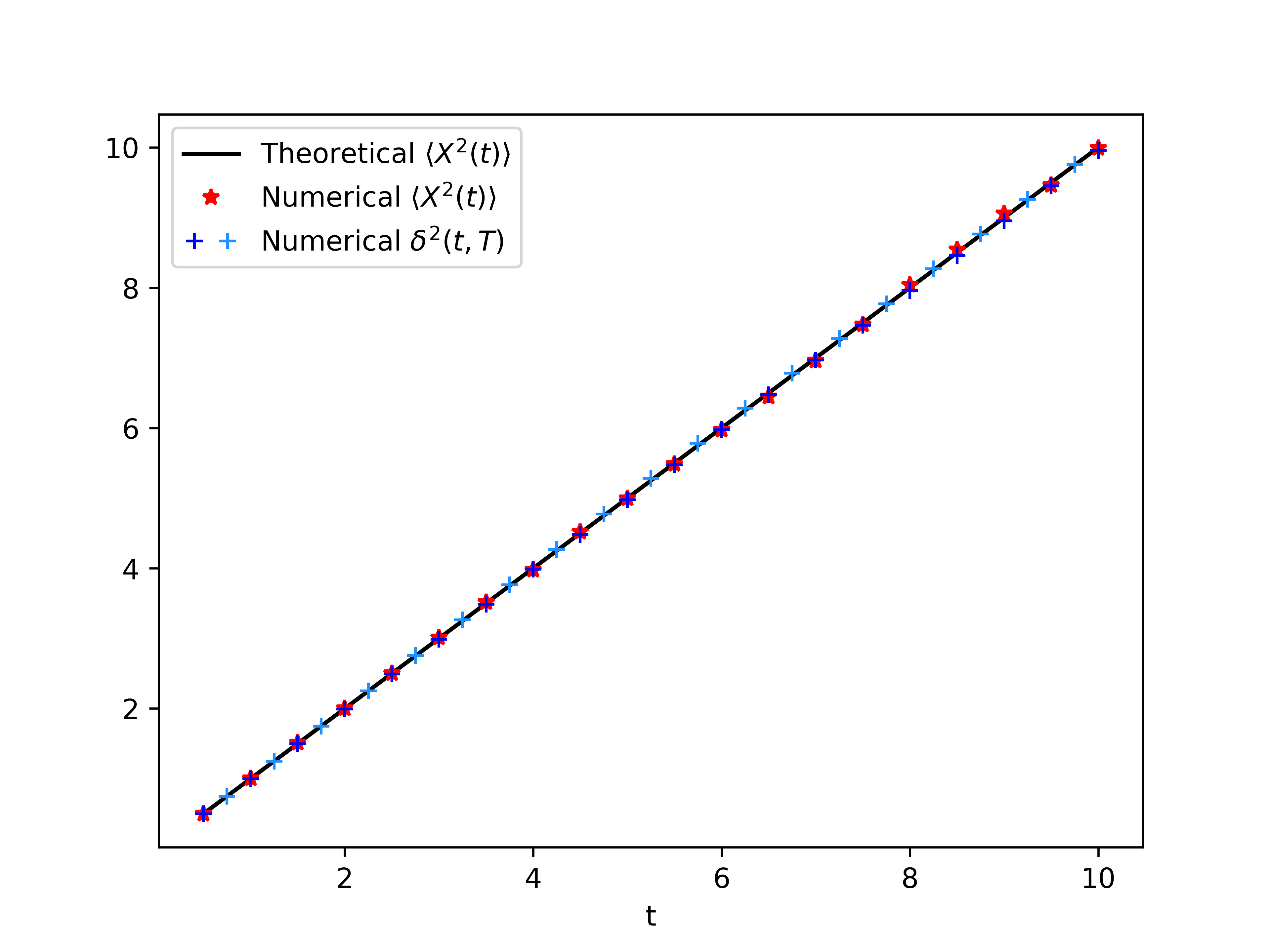}
    }\hfill
    \subcaptionbox{Weak ergodicity breaking for the time-changed Brownian motion $X_t=B_{L_t}$\label{fig:ErgodicityBreaking}}[.32\linewidth]{
        \includegraphics[width=\linewidth]{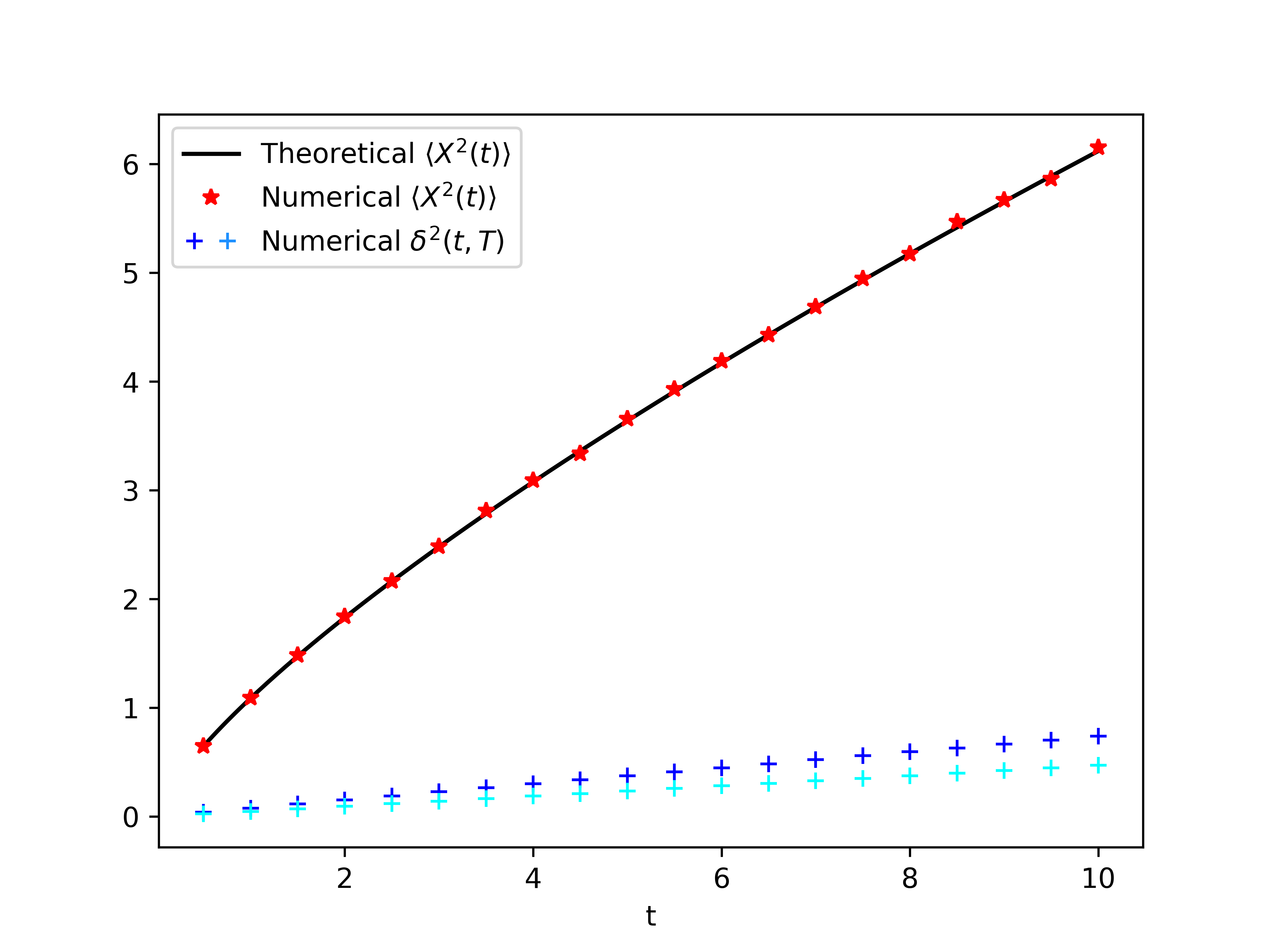}
    }\hfill
    \subcaptionbox{Empirical time averaged \emph{mean} square displacement \eqref{functionalaverage} for two different $T$ and the $95\%$ confidence interval \label{fig:Mdelta}}[.32\linewidth]{
        \includegraphics[width=\linewidth]{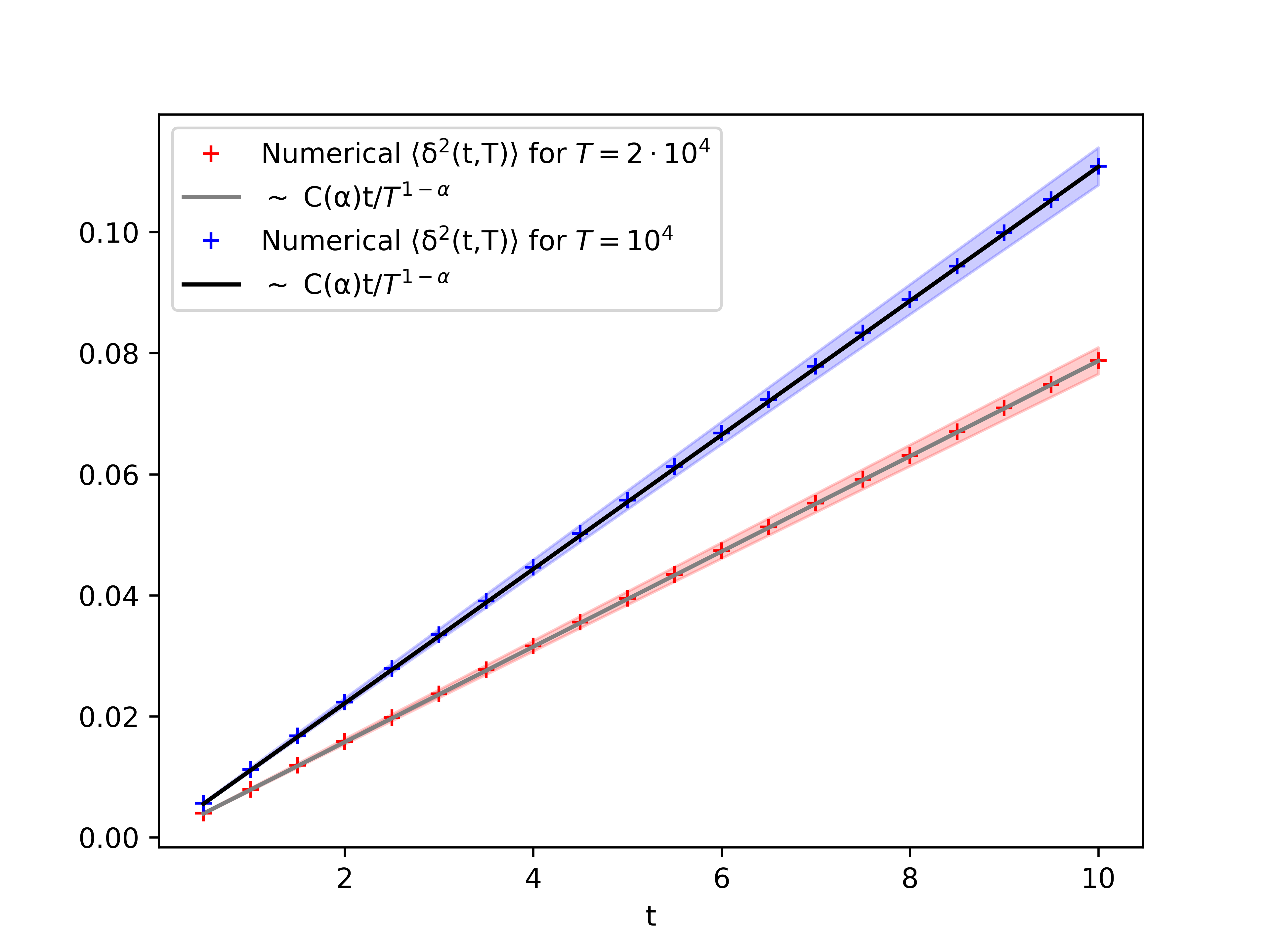}
    }
    \caption{Ergodic behavior for a 1-dimensional, driftless Brownian motion started at zero, and weak ergodicity breaking for a time-changed Brownian motion (the time-changed obtained by a $0.75$-stable inverse subordinator for (A) and (B) while $0.5$ for (C)). The blue and cyan crosses in (A) and (B) stand for independent samples of $\delta^2(t,T)$. In (A) and (B), the mean ensemble is performed with $10^5$ trajectories and $T=10^5$, while in (C), it is performed with {$10^4$} trajectories. {Here we observe that $C(1/2)\approx 1.1\dots$}}
    \label{fig:WEB}
\end{figure}

\appendix
\section{Explicit Euler-Maruyama error}
To prove Lemma \ref{constants}, we need an auxiliary result.
\begin{lemma}\label{difference}
    Suppose \eqref{Lipschitz} and \eqref{Linear} hold, and that $\E|X_{t_0}|^{2n}<\infty$, for some $n\in\N$. Then the solution $X_t$ to~\eqref{sde} satisfies
    \begin{align*}
        \E|X_t|^{2n} \leq c_{0,n} e^{2c_n (t-t_0)},
    \end{align*}
    and
    \begin{align*}
        \E|X_t - X_s|^2 \leq  4(1+m^2)(1+c_{0,1}) dK^2 e^{2c_1t} h,
    \end{align*}
    for any two time points $s$ and $t$ such that $0\leq t-s\leq h$, $h\in(0,1)$, where      $c_n= 4nd(K +\frac{1}{2}mK^2 + (n-1)dmK^2)$ and $c_{0,n}=1/2+\E|X_{t_0}|^{2n}$.
\end{lemma}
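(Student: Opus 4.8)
The plan is to establish first the moment bound $\EE[|X_t|^{2n}]\le c_{0,n}e^{2c_n(t-t_0)}$ by the classical It\^o--Grönwall argument (in the spirit of \cite{kloeden1992,Protter2004} and adapting \cite{Kobayashi2016}), keeping track of the constants, and then to read off the increment estimate from the $n=1$ case. Existence and uniqueness of the strong solution $X_t$ under \eqref{Lipschitz}--\eqref{Linear} is already recalled just above \eqref{sde}; in fact only the linear-growth bound \eqref{Linear} will enter the computations. For the moment bound, fix $n\ge1$, observe that $f(x)=|x|^{2n}=\bigl(\sum_{i=1}^d x_i^2\bigr)^n$ is a polynomial, hence $C^\infty$, and apply It\^o's formula to $f(X_{t\wedge\tau_R})$, where $\tau_R=\inf\{t\ge t_0:|X_t|\ge R\}$ is the usual localizing sequence. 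On $\{s\le\tau_R\}$ the integrand of the resulting stochastic integral is bounded, so taking expectations removes it and leaves
\begin{multline*}
\EE[|X_{t\wedge\tau_R}|^{2n}]=\EE[|X_{t_0}|^{2n}]\\
+\EE\int_{t_0}^{t\wedge\tau_R}\Bigl(2n|X_s|^{2n-2}\langle X_s,a\rangle+n|X_s|^{2n-2}\mathrm{tr}(bb^{\top})+2n(n-1)|X_s|^{2n-4}\langle X_s,bb^{\top}X_s\rangle\Bigr)\,ds,
\end{multline*}
where $a=a(s,X_s)$, $b=b(s,X_s)$ (for $n=1$ the last term is absent, its coefficient vanishing, and no singularity of $|x|^{2n-4}$ occurs). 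Bounding $|X_s^i|\le|X_s|$ coordinatewise and inserting \eqref{Linear} termwise, each integrand term is dominated by a constant depending on $n,d,m,K$ times $|X_s|^{2n-2}(1+|X_s|)^2$; using $|x|^{2n-2}(1+|x|)^2\le C(1+|x|^{2n})$ one gets, with $g(t)=1+\EE[|X_{t\wedge\tau_R}|^{2n}]$, an inequality of the form $g(t)\le c_{0,n}+2c_n\int_{t_0}^t g(s)\,ds$, where $c_{0,n}=1+\EE[|X_{t_0}|^{2n}]$ and the precise $c_n=4nd\bigl(K+\tfrac12 mK^2+(n-1)dmK^2\bigr)$ comes out of the bookkeeping of how the $(1+|x|)^2$ factors split into a constant and a $|x|^{2n}$ part. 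Grönwall's lemma then gives $\EE[|X_{t\wedge\tau_R}|^{2n}]\le g(t)\le c_{0,n}e^{2c_n(t-t_0)}$ uniformly in $R$, and letting $R\to\infty$ with Fatou's lemma yields both $\EE[|X_t|^{2n}]<\infty$ and the first claim.

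For the increment estimate, take $0\le t-s\le h$ with $h\in(0,1)$ and write $X_t-X_s=\int_s^t a(u,X_u)\,du+\int_s^t b(u,X_u)\,dW_u$, so $\EE[|X_t-X_s|^2]\le 2\,\EE\bigl[|\int_s^t a\,du|^2\bigr]+2\,\EE\bigl[|\int_s^t b\,dW_u|^2\bigr]$. For the drift term, Cauchy--Schwarz in time gives $|\int_s^t a^i\,du|^2\le(t-s)\int_s^t|a^i|^2\,du$; summing over $i$ and using \eqref{Linear} yields $\EE\bigl[|\int_s^t a\,du|^2\bigr]\le h\,dK^2\int_s^t\EE[(1+|X_u|)^2]\,du$. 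For the diffusion term, the It\^o isometry applied componentwise (using independence of the $W^j$) gives $\EE\bigl[|\int_s^t b\,dW_u|^2\bigr]=\int_s^t\EE[\|b(u,X_u)\|_F^2]\,du\le dmK^2\int_s^t\EE[(1+|X_u|)^2]\,du$; the required integrability is exactly the $n=1$ moment bound just proved. Invoking that bound again, $\EE[(1+|X_u|)^2]\le2(1+\EE[|X_u|^2])\le2(1+c_{0,1})e^{2c_1u}\le2(1+c_{0,1})e^{2c_1t}$ for $u\le t$; since $t-s\le h<1$ the drift contribution is $O(h^2)=O(h)$, and collecting both contributions, with $1+m\le 1+m^2$ (recall $m\ge1$), gives exactly $\EE[|X_t-X_s|^2]\le4(1+m^2)(1+c_{0,1})dK^2e^{2c_1t}h$.

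The argument is routine; the only points requiring genuine care are (i) the localization by $\tau_R$ --- needed both to justify discarding the expectation of the stochastic integral and to secure the a priori finiteness of $\EE[|X_t|^{2n}]$ before Grönwall can be applied, the bound then passing to the limit by Fatou --- and (ii) the bookkeeping in the moment step that turns the crude domination ``$\le C(n,d,m,K)|X_s|^{2n-2}(1+|X_s|)^2$'' into the stated constant $c_n$, where one must be careful that the factor $d$ (rather than $\sqrt d$) arises from bounding $|X_s^i|\le|X_s|$ and the It\^o correction terms coordinate by coordinate. No real obstacle is expected.
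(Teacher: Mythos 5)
Your proposal is correct and takes essentially the same route as the paper: apply It\^o's formula to $|X_t|^{2n}$, bound the drift using \eqref{Linear}, absorb the constant via $g(t)=1+\EE[|X_t|^{2n}]$ and apply Gr\"onwall, then obtain the increment bound from Cauchy--Schwarz, the It\^o isometry, and the $n=1$ moment estimate. The only (cosmetic) differences are that you localize explicitly with $\tau_R$ and pass to the limit via Fatou, work with the vector/trace form of It\^o's formula and the Frobenius norm rather than componentwise, and reach the $(1+m^2)$ factor via $1+m\le1+m^2$ rather than by the paper's $(a+\sum_j b_j)^2\le2a^2+2m\sum_j b_j^2$ step; none of this changes the constants.
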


\begin{proof}
    It follows from the It\^o formula that $|X_t|^{2n}$ satisfies the SDE
    \begin{align*}
        |X_t|^{2n} = |X_{t_0}|^{2n} &+ \sum_{i=1}^d\int_{t_0}^t 2n|X_s|^{2n-2} X_s^i a^i(s,X_s) ds\\
        &+ \sum_{j=1}^m \int_{t_0}^t \sum_{i=1}^d 2n|X_s|^{2n-2} X_s^i b^{i,j}(s,X_s) dW_s^j\\
        &+ \frac{1}{2} \sum_{i,j=1}^d \int_{t_0}^t 2n(2n-2)|X_s|^{2n-4} X_s^i X_s^j B^{i,j}(s,X_s) ds\\
        &+ \frac{1}{2} \sum_{i=1}^d \int_{t_0}^t 2n |X_s|^{2n-2} B^{i,i}(s, X_s) ds,
    \end{align*}
    where $B^{i,j}(s,X_s) =\sum_{k=1}^m b^{i,k}(s,X_s) b^{j,k}(s,X_s)$. 
	
    Taking the expectation we obtain
    \begin{align*}
        \E|X_t|^{2n} \leq& \E|X_{t_0}|^{2n} + \E\Big[ \sum_{i=1}^d \int_{t_0}^t 2n|X_s|^{2n-2} \Big( |X_s^i||a^i(s,X_s)| + \frac{1}{2}|B^{i,i}(s, X_s)| \\
        &\hspace{53mm} + \frac{1}{2} \sum_{j=1}^d (2n-2) |B^{i,j}(s,X_s)| \Big) ds \Big]\\
        \leq& \E|X_{t_0}|^{2n} + \E\Big[ \sum_{i=1}^d \int_{t_0}^t 2n|X_s|^{2n-2} \Big( |X_s|K(1+|X_s|) \\
        &\hspace{15mm} + \frac{1}{2}mK^2 (1+|X_s|)^2 + (n-1)dmK^2 (1+|X_s|)^2 \Big) ds \Big]\\
        \leq& \E|X_{t_0}|^{2n} + \E\Big[ \sum_{i=1}^d \int_{t_0}^t 2n|X_s|^{2n-2} (1+|X_s|)^2 \\
        &\hspace{37.5mm} \times \l K + \frac{1}{2}mK^2 + (n-1)dmK^2 \r ds \Big]\\
        \leq& \E|X_{t_0}|^{2n} + 4nd\l K + \frac{1}{2}mK^2 + (n-1)dmK^2 \r\\
        &\hspace{42.5mm}\times\E\Big[ \int_{t_0}^t |X_s|^{2n-2} (1+|X_s|^2) ds \Big]\\
        \leq& \E|X_{t_0}|^{2n} + c_n(t-t_0) + 2c_n\E\Big[ \int_{t_0}^t |X_s|^{2n} ds \Big],
    \end{align*}
    where $c_n=4nd\l K + \frac{1}{2}mK^2 + (n-1)dmK^2 \r$.
	
    The first part of the desired result is now obtained by employing the Gronwall inequality, see e.g. \citep[Lemma 4.5.1]{kloeden1992}, and by integration by parts.
	
    Since $|X_t-X_s|^2\leq \sum_{i=1}^d |X^i_t-X^i_s|^2$, to establish the second result, we use the triangular and Cauchy-Schwarz inequalities, respectively, to obtain
    \begin{align*}
        |X_t^i - X_s^i|^2 &\leq 2\Big| \int_s^t a^i(r,X_r) dr \Big|^2 + 2m \sum_{j=1}^m \Big|\int_s^t b^{i,j}(r,X_r) dW_r^j \Big|^2\\
        \E|X_t^i - X_s^i|^2 &\leq 2\E\Big[\Big| \int_s^t a^i(r,X_r) dr \Big|^2\Big] + 2m \sum_{j=1}^m \E\Big[\Big| \int_s^t b^{i,j}(r,X_r) dW_r^j \Big|^2\Big].\\
    \end{align*}
    Using Cauchy-Schwarz inequality, It\^o isometry, and the linear growth condition then yields
    \begin{align*}
        \E|X_t^i - X_s^i|^2 &\leq 2 \int_s^t \E\Big[ (t-s) |a^i(r,X_r)|^2 + m \sum_{j=1}^m |b^{i,j}(r,X_r)|^2 \Big]  dr\\
        &\leq 2 \int_s^t \E\Big[ K^2(1+|X_r|)^2 + m\sum_{j=1}^m K^2 (1+|X_r|)^2 \Big] dr\\
        &=2(1+m^2) K^2 \int_s^t \E[(1+|X_r|)^2] dr\\
        &\leq 4(1+m^2) K^2 \int_s^t \E[1+|X_r|^2] dr.
    \end{align*}
    By using the first part of this lemma, we get
    \begin{align*}
        \E|X_t^i - X_s^i|^2 &\leq 4(1+m^2) K^2 \int_s^t (1+ c_{0,1} e^{ 2c_1 r }) dr\\
        &\leq 4(1+m^2) K^2 (1+c_{0,1}) e^{ 2c_1 t } {h,}
    \end{align*}
    which finishes the proof of the second claim.
\end{proof}

\begin{proof}[{Proof of Lemma \ref{constants}}]
    The proof follows the line of \cite{Kobayashi2016}, and we take care of tracking the constants. Let us define $\tilde{X}_s = X_s^i - X_s^{i,h} $ where $X_s^{i,h}$ is defined in \eqref{EM}. It follows that
    \begin{align}
        X_{t}^{i,h} \, = \, X_{t_0}^{i,h} +\int_{t_0}^t a^i (t_{n_r}, X_{t_{n_r}}^h) dr + \sum_{j=1}^m  \int_{t_0}^t b^{i,j} (t_{n_r}, X_{t_{n_r}}^h) dW^j_r,\label{EMIntegralForm}
    \end{align}
    where $n_r \coloneqq \max\{ n\in\N:\, t_n\leq r \}$. Denoting by {$[X,X]=([X,X]_t,\, t\geq0)$} the quadratic variation process for an arbitrary $X$, for $\tilde{X}$ we have that
    \begin{align}
        \tilde{X}_s^2 &= \int_0^s 2\tilde{X}_r d\tilde{X}_r + [\tilde{X},\tilde{X}]_s.
    \end{align}
    The integral representations~\eqref{sde_components} and~\eqref{EM} then yield
    \begin{align}
        \tilde{X}_s^2 =& 2\int_0^s \tilde{X}_r \big( a^i(r,X_r) -a^i(t_{n_r},X_{t_{n_r}}^h) \big) dr \nonumber\\
        &+ 2 \sum_{j=1}^m \int_0^s \tilde{X}_r \big( b^{i,j}(r,X_r) -b^{i,j} (t_{n_r},X_{t_{n_r}}^h) \big) dW_r^j \nonumber\\
        &+ \sum_{j=1}^m \int_0^s \big( b^{i,j}(r,X_r) -b^{i,j}(t_{n_r},X_{t_{n_r}}^h)   \big)^2 dr.
    \label{a3}
    \end{align}
    We use the representation in \eqref{a3} to evaluate the strong error. We define
    \begin{align}
        Z(t) \coloneqq \E\Big[ \sup_{0\leq s\leq t} \tilde{X}_s^2\Big] = I_1(t) +I_2(t) +I_3(t),
    \end{align}
    where $I_i$ denotes the expectation of the supremum over $s\in[0,t]$ of the $i$-th term in \eqref{a3}.

    The term $I_2(t)$ can be estimated by using the Burkholder-Davis-Gundy inequality \citep[Theorem 48, Chapter IV]{Protter2004}:
    \begin{align*}
        I_2(t) &\leq 2\E\Big[ \sup_{0\leq s\leq t} \Big| \sum_{j=1}^m\int_0^s \tilde{X}_r \big( b^{i,j}(r,X_r) -b^{i,j} (t_{n_r},X_{t_{n_r}}^h) \big) dW_r^j \Big|\Big]\\
        &\leq 2C_1 \E\Big[ \Big( \sum_{j=1}^m \int_0^t \tilde{X}_r^2 \big( b^{i,j}(r,X_r) -b^{i,j}(t_{n_r},X_{t_{n_r}}^h) \big)^2 dr \Big)^{1/2}\Big]\\
        &\leq  \E\Big[ \Big( \sup_{0\leq s\leq t} \tilde{X}_s^2\Big)^{1/2}  \Big( 4C_1^2\sum_{j=1}^m \int_0^t \big( b^{i,j}(r,X_r) -b^{i,j}(t_{n_r},X_{t_{n_r}}^h) \big)^2 dr \Big)^{1/2}\Big]
    \end{align*}
    where $C_1= 1.30693...$ see \cite[p. 591]{Osekowski2010}. Using 
    the inequality $(xy)^{1/2} \leq x/2 +y$ for all $x,\,y\geq0$  yields
    \begin{align}
        I_2(t)\leq \,& \frac{1}{2}Z(t) + 4C_1^2 \E\Big[\sum_{j=1}^m \int_0^t \big( b^{i,j}(r,X_r) -b^{i,j}(t_{n_r},X_{t_{n_r}}^h) \big)^2 dr\Big] \notag \\
        = \, & \frac{1}{2}Z(t) + 4C_1^2 \, I_3(t).
    \end{align}
    Consequently, we have
    \begin{align}
        Z(t) \leq  2 \, I_1 (t)  + 2(1+4C_1^2) \, I_3 (t).
    \end{align}

    To deal with $I_3(t)$ we use \eqref{Lipschitz}, \eqref{Linear} and \eqref{tdiversi} to get 
    \begin{align*}
        &|b^{i,j}(r,X_r) -b^{i,j}(t_{n_r},X_{t_{n_r}}^h)|\\
        &\quad\leq |b^{i,j}(r,X_r) -b^{i,j}(t_{n_r},X_r)| + | b^{i,j}(t_{n_r}, X_r) -b^{i,j}(t_{n_r},X_{t_{n_r}}) |\\
        &\qquad+ | b^{i,j}(t_{n_r},X_{t_{n_r}}) -b^{i,j}(t_{n_r},X_{t_{n_r}}^h) |\\
        &\quad\leq K(1+|X_r|) (r-t_{n_r})^\gamma +K|X_r - X_{ t_{n_r} }| + K|X_{t_{n_r}}-X^h_{t_{n_r}}|,
    \end{align*}
    and equivalently
    \begin{align}\label{1352}
        |a^i(r, X_r) -a^i(t_{n_r}, X_{t_{n_r}}^h)| \leq& K(1+|X_r|) (r-t_{n_r})^\gamma +K|X_r - X_{ t_{n_r} } |\nonumber\\
        &+ K|X_{t_{n_r}}-X^h_{t_{n_r}} |.
    \end{align}
    Then, by using Lemma \ref{difference}
    \begin{align*}
        I_3(t) \leq& 3K^2\E\Big[ \sup_{0\leq s\leq t} \sum_{j=1}^m \int_0^s \Big( 2(1+|X_r|^2) h^{2\gamma} + |X_r - X_{ t_{n_r} }|^2 \\
        &\hspace{63mm} + |X_{t_{n_r}}-X^h_{t_{n_r}} |^2 \Big)dr \Big]\\
        &\leq 3mK^2 \int_0^t ( 2(1+c_{0,1}e^{2c_1 r}) h^{2\gamma} + 4(1+m^2) (1+c_{0,1}) dK^2 e^{2c_1 r} h ) dr\\
        &\qquad +3mK^2 \int_0^t \E|X_{t_{n_r}}-X^h_{t_{n_r}} |^2 dr\\
        &\leq 3mK^2 \Big( 2\Big( t+\frac{c_{0,1}}{2c_1}e^{2c_1 t}\Big) h^{2\gamma} + 4(1+m^2)\frac{(1+c_{0,1})}{2c_1}dK^2 e^{2c_1 t} h \Big)\\
        &\qquad +3mK^2 \int_0^t \E|X_{t_{n_r}}-X^h_{t_{n_r}} |^2 dr\\
        &\leq 3mK^2 \Big( 2\Big(1+\frac{c_{0,1}}{2c_1}\Big) h^{2\gamma} + 4(1+m^2) 2\Big(1+\frac{c_{0,1}}{2c_1}\Big) dK^2 h \Big) e^{2c_1 t}\\
        &\qquad +3mK^2 \int_0^t \E| X_{t_{n_r}}-X^h_{t_{n_r}} |^2 dr\\
        &\leq 6mK^2 \Big(1+\frac{c_{0,1}}{2c_1}\Big) (h^{2\gamma} + { 4(1+m^2)} dK^2) h) e^{2c_1 t}\\
        &\hspace{47.5mm} +3mK^2 \int_0^t \E| X_{t_{n_r}}-X^h_{t_{n_r}} |^2 dr\\
        &\leq {6}mK^2 (1+4(1+m^2) dK^2) \Big(1+\frac{c_{0,1}}{2c_1}\Big) e^{2c_1 t} \max(h^{2\gamma},h)\\
        &\hspace{47.5mm} +3mK^2 \int_0^t \E| X_{t_{n_r}}-X^h_{t_{n_r}} |^2 dr.
    \end{align*}

    By using the inequalities $2xy\leq x^2 + y^2$ and \eqref{1352}
    \begin{align*}
        I_1(t) &\leq \E\Big[ \sup_{0\leq s\leq t} \int_0^s 2|\tilde{X}_r| | a^i(r,X_r) -a^i(t_{n_r},X_{t_{n_r}}^h) |dr\Big]\\
        &\leq \E\Big[ \sup_{0\leq s\leq t} \int_0^s ( |\tilde{X}_r|^2 + |a^i(r,X_r) -a^i(t_{n_r},X_{t_{n_r}}^h) |^2 ) dr\Big]\\
        &\leq \int_0^t Z(r)dr + \E\Big[ \sup_{0\leq s\leq t} \int_0^s | a^i(r,X_r) -a^i(t_{n_r},X_{t_{n_r}}^h) |^2\Big]\\
        &\leq \int_0^t Z(r)dr + 3K^2\E\Big[ \sup_{0\leq s\leq t} \int_0^s \big( (1+|X_r|)^2 h^{2\gamma} + |X_r -X_{ t_{n_r} } |^2 \\
        &\hspace{7cm} + |X_{t_{n_r}}-X^h_{t_{n_r}} |^2 \big)dr \Big] \\
        &\leq { 6}K^2 (1+4(1+m^2) dK^2) \Big(1+\frac{c_{0,1}}{2c_1}\Big) e^{2c_1 t} \max(h^{2\gamma},h)\\
        &\hspace{47.5mm} + (3K^2 +1)\int_0^t \E\Big[\sup_{0\le s\le r} |X_{s}-X^h_s |^2 \Big] dr.
    \end{align*}
    Thus, since
    \begin{align*}
        \E\Big[ \sup_{0\leq s\leq t} |X_s-X_s^h|^2\Big] \leq \sum_{i=1}^d \E\Big[ \sup_{0\leq s\leq t} |X^i_s-X^{i,h}_s|^2 \Big],
    \end{align*}
    and by the estimates of $I_1$, $I_2$, and $I_3$, we get
    \begin{align*}
        \E\Big[ \sup_{0\leq s\leq t} |X_s-X_s^h|^2\Big] &\leq A e^{2c_1 t} + B \int_0^t \E\Big[ \sup_{0\leq s\leq r} |X_s-X_s^h|^2\Big] dr,
    \end{align*}
    $A={ 12}dK^2 (1+(1+4C_1^2)m) (1+4(1+m^2) dK^2) \Big(1+\frac{c_{0,1}}{2c_1}\Big) h^{\min{(2\gamma,1)}}$, $B=2d(3K^2(1+(1+4C_1^2)m) +1)$, which together with Gronwall's inequality (see \citep[Lemma 4.5.1]{kloeden1992}), yields
    \begin{align*}
        \E\Big[ \sup_{0\leq s\leq t} |X_s-X_s^h|^2 \Big] &\leq A e^{2c_1 t} + B \int_0^t \E\Big[ \sup_{0\leq s\leq r} |X_s-X_s^h|^2 \Big] dr,\\
        &\leq Ae^{2c_1 t} + B\int_0^t Ae^{2c_1r} e^{B(t-r)}dr\\
        &= Ae^{2c_1 t} + \frac{ABe^{Bt}(1-e^{-(B-2c_1)t})}{B-2c_1},
    \end{align*}
    {since} $2c_1-B<0$ for all $K  >0$, $m,d\in\N$.
\end{proof}

\begin{remark}
    For time-independent coefficients, the {constants $A$ and $B$ read}
    \begin{align*}
        A &= {8}(1+m^2)\frac{(1+c_{0,1})}{c_1}(1+(1+4C_1^2)m)d^2 K^4h,\\
        B &={2d(2K^2(1+(1+4C_1^2)m) +1).}
    \end{align*}
\end{remark}

\begin{lemma}\label{our} 
    Let $\sigma$ be a subordinator given by \eqref{bernstinsimul}, and $L_t$ its inverse. Then, for any $\mathcal{C}_1>0$, and for any $\mathcal{C}_2>0$ such that $\phi(\mathcal{C}_2)>\mathcal{C}_1$ it holds that
    	$$
    	\E[e^{\mathcal{C}_1 L_t}] \leq 1+\frac{e^{\mathcal{C}_2 t}}{\psi(\mathcal{C}_2)-\mathcal{C}_1},\quad t>0.
    	$$  
\end{lemma}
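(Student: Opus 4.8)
The plan is to express the exponential moment of $L_t$ through the lower tail of the subordinator $\sigma$ and then apply an exponential Chebyshev (Markov) estimate. Since $L_t\ge 0$ and $e^{\mathcal{C}_1 a}=1+\mathcal{C}_1\int_0^{a}e^{\mathcal{C}_1 u}\,du$ for $a\ge 0$, Tonelli's theorem gives
\begin{align}
\EE\big[e^{\mathcal{C}_1 L_t}\big]=1+\mathcal{C}_1\int_0^{+\infty}e^{\mathcal{C}_1 u}\,\PP\big(L_t\ge u\big)\,du,
\end{align}
so the whole problem reduces to controlling $\PP(L_t\ge u)$ as $u\to+\infty$.

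For that tail I would use the definition $L_t=\inf\{s:\sigma_s>t\}$: by monotonicity of $\sigma$ and right continuity one has $\{L_t\ge u\}=\{\sigma_{u-}\le t\}$, and since a L\'evy process has no fixed discontinuities, $\PP(L_t\ge u)=\PP(\sigma_u\le t)$ for every fixed $u>0$. (The standing infinite-activity hypothesis $\nu(0,+\infty)=+\infty$ moreover makes $\sigma$ strictly increasing, hence $L$ continuous, which makes this identification completely routine.) An exponential Markov inequality with parameter $\mathcal{C}_2$ then yields
\begin{align}
\PP(\sigma_u\le t)=\PP\big(e^{-\mathcal{C}_2\sigma_u}\ge e^{-\mathcal{C}_2 t}\big)\le e^{\mathcal{C}_2 t}\,\EE\big[e^{-\mathcal{C}_2\sigma_u}\big]=e^{\mathcal{C}_2 t}e^{-u\,\phi(\mathcal{C}_2)},
\end{align}
where the last equality is just \eqref{bernstinsimul} read as the Laplace transform of $\sigma_u$.

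Substituting this bound into the first display and integrating (again by Tonelli), everything collapses to the elementary integral $\int_0^{+\infty}e^{-(\phi(\mathcal{C}_2)-\mathcal{C}_1)u}\,du$, which converges precisely because $\phi(\mathcal{C}_2)>\mathcal{C}_1$ --- this is the only place that hypothesis enters --- and whose value $(\phi(\mathcal{C}_2)-\mathcal{C}_1)^{-1}$ delivers the announced estimate; one should also note that, under infinite activity, $\phi$ is unbounded, so for every $\mathcal{C}_1>0$ such a $\mathcal{C}_2$ exists and the statement is never vacuous. An equivalent, more structural route I would mention is to first obtain the Laplace transform in time, $\int_0^{+\infty}e^{-\lambda t}\EE[e^{\mathcal{C}_1 L_t}]\,dt=\phi(\lambda)\big(\lambda(\phi(\lambda)-\mathcal{C}_1)\big)^{-1}$ for $\phi(\lambda)>\mathcal{C}_1$, using $\int_0^{+\infty}e^{-\lambda t}\PP(\sigma_u\le t)\,dt=\lambda^{-1}e^{-u\phi(\lambda)}$, and then to exploit the monotonicity of $t\mapsto\EE[e^{\mathcal{C}_1 L_t}]$ to pass from the transform back to a pointwise bound at $\lambda=\mathcal{C}_2$.

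I do not anticipate a genuine obstacle here: the argument is short. The only two points that deserve a line of justification are the identity $\PP(L_t\ge u)=\PP(\sigma_u\le t)$ at a fixed $u$ (a standard consequence of the definition of the first-passage time together with the absence of fixed discontinuities of $\sigma$) and the two appeals to Tonelli, both harmless since every integrand in sight is nonnegative.
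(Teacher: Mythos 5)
Your route — the first-passage identity $\PP(L_t\ge u)=\PP(\sigma_u\le t)$, followed by the exponential Markov bound $\PP(\sigma_u\le t)\le e^{\mathcal{C}_2 t}\EE[e^{-\mathcal{C}_2\sigma_u}]=e^{\mathcal{C}_2 t-u\phi(\mathcal{C}_2)}$ and a tail integration — is the natural one, and every intermediate step is correct. Since the paper gives no self-contained proof of this lemma (it simply points to Lemma~A.1 of the cited González Cázares--Mijatović work), your write-up is a genuine service; the alternative Laplace-transform route you sketch is also sound in spirit (it yields $\EE[e^{\mathcal{C}_1 L_t}]\le e^{\mathcal{C}_2 t}\,\phi(\mathcal{C}_2)/(\phi(\mathcal{C}_2)-\mathcal{C}_1)$, a slightly worse constant).

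The one real problem is your closing claim. Inserting the Chernoff bound into your first display gives
\begin{align*}
\EE\big[e^{\mathcal{C}_1 L_t}\big]\le 1+\mathcal{C}_1 e^{\mathcal{C}_2 t}\int_0^{+\infty}e^{-(\phi(\mathcal{C}_2)-\mathcal{C}_1)u}\,du = 1+\frac{\mathcal{C}_1\, e^{\mathcal{C}_2 t}}{\phi(\mathcal{C}_2)-\mathcal{C}_1},
\end{align*}
which carries an extra factor $\mathcal{C}_1$ that the lemma as printed does not have. That factor cannot be dropped: the printed inequality (besides the obvious misprint $\psi$ for $\phi$) is false in general. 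Take the $1/2$-stable subordinator, $\phi(\lambda)=\sqrt{\lambda}$, for which $L_t$ has the same law as $\sqrt{2t}\,|N|$ with $N$ standard normal, so $\EE[e^{\mathcal{C}_1 L_t}]=2e^{\mathcal{C}_1^2 t}\,\PP(N<\mathcal{C}_1\sqrt{2t})$. With $\mathcal{C}_1=100$, $t=10^{-4}$, $\mathcal{C}_2=4\cdot 10^4$ one has $\phi(\mathcal{C}_2)=200>\mathcal{C}_1$, the left-hand side equals $2e\,\PP(N<\sqrt{2})\approx 5.0$, whereas $1+e^{\mathcal{C}_2 t}/(\phi(\mathcal{C}_2)-\mathcal{C}_1)=1+e^4/100\approx 1.55$; the bound your argument actually produces, $1+\mathcal{C}_1 e^4/100=1+e^4\approx 55.6$, holds comfortably. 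So what you have proved is the (correct) estimate $\EE[e^{\mathcal{C}_1 L_t}]\le 1+\mathcal{C}_1 e^{\mathcal{C}_2 t}/(\phi(\mathcal{C}_2)-\mathcal{C}_1)$; you should state this and flag the discrepancy with the lemma rather than assert that your integral ``delivers the announced estimate.'' The extra $\mathcal{C}_1$ is harmless for the applications in Theorem~\ref{strerror1} and the complexity propositions, since only the $e^{\mathcal{C}_2 t}$ growth rate is used, but the explicit constants quoted downstream should be adjusted accordingly.
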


\begin{proof}
    The proof of this claim follows by repeating exactly the same steps as in \cite[Lemma A.1]{Jorge2023a}. 
\end{proof}

\section*{Acknowledgments}
The authors acknowledge financial support under the National Recovery and Resilience Plan (NRRP), Mission 4, Component 2, Investment 1.1, Call for tender No. 104 published on 2.2.2022 by the Italian Ministry of University and Research (MUR), funded by the European Union – NextGenerationEU– Project Title “Non–Markovian Dynamics and Non-local Equations” – 202277N5H9 - CUP: D53D23005670006 - Grant Assignment Decree No. 973 adopted on June 30, 2023, by the Italian Ministry of University and Research (MUR).

The authors would like to thank the Isaac Newton Institute for Mathematical Sciences, Cambridge, for support and hospitality during the programme Stochastic systems for anomalous diffusion, where work on this paper was undertaken. This work was supported by EPSRC grant EP/Z000580/1.

The author Bruno Toaldo are partially supported by Gruppo Nazionale per l’Analisi Matematica, la Probabilità e le loro Applicazioni (GNAMPA-INdAM).

The authors would like to thank Prof. Aleksandar Mijatović for fruitful discussions on the
topic that improved a previous version of this manuscript.

The authors would like to thank Prof. Eli Barkai for very useful remarks on weak ergodicity breaking.

The authors would like to thank two anonymous referees whose remarks and suggestions considerably improved a previous version of this manuscript.

\bibliographystyle{abbrv}
\bibliography{References}

\bigskip

{\bf Ivan Bio\v{c}i\'c}

Department of Mathematics, Faculty of Science, University of Zagreb, Zagreb, Croatia,

Department of Mathematics “Giuseppe Peano”, University of Turin, Turin, Italy,

Email: \texttt{ivan.biocic@unito.it}, \texttt{ivan.biocic@math.hr}

\bigskip

{\bf Daniel E. Cedeño-Girón}

Department of Mathematics “Giuseppe Peano”, University of Turin, Turin, Italy,

Email: \texttt{danieleduardo.cedenogiron@unito.it}

\bigskip
{\bf Bruno Toaldo}

Department of Mathematics “Giuseppe Peano”, University of Turin, Turin, Italy,

Email: \texttt{bruno.toaldo@unito.it}
	
\end{document}